\theoremstyle{plain}
\newtheorem{thm}{Theorem}[section]
\newtheorem{lem}[thm]{Lemma}
\newtheorem{prop}[thm]{Proposition}
\newtheorem{defi}[thm]{Definition}
\newtheorem{cor}[thm]{Corollary}
\newtheorem{rem}[thm]{Remark}
\theoremstyle{definition}
\newcommand{\N}{\mathbb{N}}
\newcommand{\Z}{\mathbb{Z}}
\newcommand{\Q}{\mathbb{Q}}
\newcommand{\R}{\mathbb{R}}
\newcommand{\K}{\mathcal{K}}
\newcommand{\Laur}{\mathbb{Z}_2[U,U^{-1}]}
\newcommand{\F}{\mathbb{Z}_2}
\newcommand{\x}{\mathbf{x}}
\newcommand{\y}{\mathbf{y}}
\newcommand{\gr}{\text{gr}}
\newcommand{\s}{\mathfrak{s}}
\newcommand{\Spin}{\text{Spin}}
\newcommand{\Spinc}{\text{Spin}^c}
\newcommand*{\QEDB}{\hfill\ensuremath{\square}}
\newcommand{\CFK}{\mathcal{CFK}}
\newcommand{\XX}{\mathbb{X}}
\newcommand{\OO}{\mathbb{O}}
\newcommand{\modd}[1]{\;(\text{mod } #1)}
\newcommand{\spinc}{\mbox{spin}^c}
\newcommand{\lp}{L(p,q)}
\newcommand{\lu}{L(p,1)}
\newcommand{\CFKinf}{{\mathcal {CFK}}^{\infty}}
\newcommand{\CFKinfty}{\CFKinf}
\newcommand{\mathcalCFK}{{\mathcal {GCFK}}}
\newcommand{\twistknot}{TW}
\author{Antonio Alfieri} 
\address{University of British Columbia, Mathematics Department, 1984 Mathematics Rd, Vancouver, BC V6T 1Z2, Canada}
\email{alfieriantonio90@gmail.com}
\author{Daniele Celoria}
\address{Mathematical Institute, University of Oxford, Radcliffe Observatory Quarter, Woodstock Rd, Oxford OX2 6GG, UK}
\email{Daniele.Celoria@maths.ox.ac.uk}
\author{Andr\'as Stipsicz}
\address{R\'enyi Institute of Mathematics, 1053. Budapest, Re\'altanoda
utca 13-15. Hungary}
\email{stipsicz@renyi.hu}
\begin{document}
\title[]{Upsilon invariants from cyclic branched covers}
\begin{abstract} 
We extend the construction of $\Upsilon$-type invariants to null-homologous knots in rational homology three-spheres. By considering $m$-fold cyclic branched covers with $m$ a prime power, this extension provides
new knot concordance invariants $\Upsilon_m^C(K)$ of knots in $S^3$. We give computations of some of these  invariants for 
alternating knots and reprove independence results in the smooth concordance group.
\end{abstract}
\maketitle
\thispagestyle{empty}
\section{Introduction}
\label{sec:intro}

Knot Floer homology turned out to be an extremely powerful tool in
studying properties of knots. For $K\subset S^3$ the theory provides
invariants which determine the Seifert genus of the knot~\cite{OS1},
whether it is fibered or not~\cite{Ghi, YN} (both achieved by the
${\widehat {HFK}}$-theory), and further versions of the construction
give lower bounds on the slice genus (the $\tau$ invariant stemming
from the $HFK^{-}$-theory).  The most general version (the graded, 
$\Z\oplus \Z$-filtered 
$\F [U, U^{-1}]$-module $\CFKinf$) gave rise to additional
invariants, like Hom's $\epsilon$-invariant \cite{Hom}, or the
$\Upsilon$-function of \cite{OSS4}, or its further variants $\Upsilon
^C$ from \cite{alfieri1}.  More recently, based on work of Hendricks
and Manolescu \cite{manoleacuendriks} involutive invariants emerged as
useful tools in the theory. Most of these constructions worked for
knots in $S^3$ and provided strong results on the structure of the
smooth concordance group ${\mathcal {C}}$.

In \cite{OSS4} Ozsv\' ath, Szab\' o and the third author pointed out
the possibility of capturing more information about concordance of
knots in $S^3$ by running the routine of the upsilon invariant for
their preimages in branched covers. In this paper we pursuit that idea
and perform some computations of the resulting invariants. Notice that
this option was already explored by Grigsby, Ruberman, and Strle
\cite{grigsby2008knot} for the Ozsv\' ath-Szab\' o $\tau$ invariant.

In the first part of the paper we extend the theory of upsilon type
invariants to null-homologous knots in rational homology spheres
(compare with \cite{hom2018knot}) and we apply it to branched covers
to obtain invariants of knots in $S^3$. For a knot $K\subset S^3$, let
$\Sigma^m(K)$ denote the $m$-fold cyclic branched cover along $K$ for some
prime power $m$.  A knot $K\subset S^3$, a cohomology class $\xi \in
H^2(\Sigma^m(K); \Z)$, and a south-west region $C$ of the plane
provides a knot invariant $\Upsilon_\xi^C(K)$ with the following key
property.

\begin{thm}\label{theorem2} 
If $K$ is a slice knot then there exists a subgroup $G < H^2(\Sigma^m(K); \Z)$ of cardinality
$\sqrt{|H^2(\Sigma^m(K); \Z)|}$ such that $\Upsilon^C_\xi(K)=0$ for all
$\xi\in G$.  
\end{thm}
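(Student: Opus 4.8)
The plan is to run the classical Casson--Gordon slice obstruction, in the form used by Grigsby--Ruberman--Strle \cite{grigsby2008knot} for the $\tau$ invariant, but feeding it the $\Upsilon$-type invariant for null-homologous knots in rational homology spheres constructed in the first part of the paper. First I would fix a slice disk $D\subset B^4$ for $K$ and form the $m$-fold cyclic branched cover $W=\Sigma^m(B^4,D)$; then $\partial W=\Sigma^m(K)$, and since $m$ is a prime power a theorem of Casson--Gordon guarantees that $W$ is a rational homology $4$-ball (in particular $\Sigma^m(K)$ is a rational homology sphere, as required for $\Upsilon^C_\xi$ to be defined). The branch locus of $W$ is a properly embedded disk $\widetilde D\subset W$ with $\partial \widetilde D=\widetilde K\subset\Sigma^m(K)$, the preimage of $K$. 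Because $H_2(W;\Q)=0$ the disk $\widetilde D$ is rationally null-homologous, and the integral statement follows from lifting a Seifert surface for $K$; thus $\widetilde K$ bounds a null-homologous disk in the rational homology ball $W$.

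Next I would produce the subgroup $G$. Set $G:=\operatorname{Im}\bigl(H^2(W;\Z)\xrightarrow{r}H^2(\Sigma^m(K);\Z)\bigr)$, the classes that extend over $W$; this is a subgroup of $H^2(\Sigma^m(K);\Z)$. A counting argument from the long exact sequence of the pair $(W,\partial W)$, Poincar\'e--Lefschetz duality and the universal coefficient theorem --- the integral ``half lives, half dies'' lemma of Casson--Gordon --- shows that for a rational homology ball $W$ one has $\ker r\cong\operatorname{coker} r$ as finite groups. Hence $|G|^2=|H^2(\Sigma^m(K);\Z)|$, i.e. $|G|=\sqrt{|H^2(\Sigma^m(K);\Z)|}$, which is the required cardinality.

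Finally, fix $\xi\in G$ and let $\mathfrak t\in\Spinc(W)$ be a lift of the $\spinc$ structure on $\Sigma^m(K)$ associated to $\xi$ (such a lift exists precisely because $\xi$ extends over $W$, up to the $2$-divisibility point handled in the first part of the paper). Since $\widetilde D\subset W$ is a null-homologous disk bounding $\widetilde K$, the boundary $\partial W=\Sigma^m(K)$ is a rational homology sphere, and $\mathfrak t$ restricts to the $\spinc$ datum used to define $\Upsilon^C_\xi(K)$, the slice-obstruction property of the upsilon invariant for null-homologous knots in rational homology spheres --- proved in the first part of the paper, exactly as in the $S^3$ case of \cite{OSS4, alfieri1} --- yields $\Upsilon^C_\xi(K)=0$. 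As $\xi\in G$ was arbitrary, the theorem follows.

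The main obstacle I anticipate is the bookkeeping in the last step: matching the (possibly differently normalized) gradings on $\Sigma^m(K)$ with the large-surgery / knot-complement description through which $\Upsilon$ is defined, identifying precisely which family of $\spinc$ structures is cut out by a class $\xi\in G$ together with the $2$-divisibility issue, and confirming that $\widetilde D$ is integrally (not merely rationally) null-homologous in $W$, which is what the rational-homology-sphere $\Upsilon$ machinery requires. The metabolizer count in Step 2 and the cobordism input in Step 3 are then essentially formal.
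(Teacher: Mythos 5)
Your proposal is correct and follows essentially the same route as the paper: take the prime-power branched cover of the slice disk to get a rational homology ball in which the lifted disk exhibits $(\Sigma^m(K),\widetilde K,\s_0+\xi)$ as rationally slice for every $\xi$ in the image $G$ of $H^2(W;\Z)\to H^2(\Sigma^m(K);\Z)$, count $|G|$ by the standard half-lives-half-dies argument, and conclude vanishing from the rational concordance invariance of $\Upsilon^C$ established in the first part of the paper (with the extension of the canonical spin structure handled exactly as in the paper's Lemma on branched covers of surfaces in $B^4$).
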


The second part of the paper is devoted to computations. We extend
Grigsby's result \cite[Thm. 4.3]{grigsby2006combinatorial} to
encompass all $\spinc$ structures, in the subcase of alternating torus
knots. (Using some general principles, similar results can be deduced
-- here we apply direct computational methods.)

\begin{thm}\label{thm:interi}
Let $p = 2n+1$ be a given positive odd integer. For $h \in \{0,\ldots,
n\}$ there exist bi-graded quasi-isomorphisms
$$CFK^\circ (L(p,1), \widetilde{T}_{2,p}, \s_0 \pm h) \simeq  CFK^\circ (T_{2,p-2h}),$$
where $\circ = \wedge, \pm, \infty$.
\end{thm}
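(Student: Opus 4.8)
The plan is to compute $CFK^\infty(L(p,1),\widetilde{T}_{2,p},\s)$ in each $\spinc$ structure $\s$ on $L(p,1)$ and to recognise it as a staircase complex, using that an alternating knot and its branched double cover have thin knot Floer homology, together with the classification of thin complexes over $\Laur$. First, since $p$ is odd the deck transformation of $\Sigma^2(T_{2,p})=L(p,1)$ acts by $-1$ on $H_1(L(p,1);\Z)\cong\Z/p$ while fixing $\widetilde{T}_{2,p}$ pointwise, so $[\widetilde{T}_{2,p}]$ is $2$-torsion and hence zero; thus $\widetilde{T}_{2,p}$ is null-homologous, the bigraded complex $CFK^\infty(L(p,1),\widetilde{T}_{2,p},\s)$ is defined with a $\Z$-valued Alexander grading symmetric about $0$, and the statement makes sense. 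Since $\s_0$ is the self-conjugate $\spinc$ structure and knot Floer homology is conjugation symmetric, it is enough to treat $\s_0+h$ for $h\in\{0,\dots,n\}$, the case $\s_0-h$ then following formally.

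The first main step is thinness: one shows that $\widehat{HFK}(L(p,1),\widetilde{T}_{2,p},\s)$ is supported in a single diagonal $\delta=M-A$, with the expected value, for every $\s$. I would take the doubly-pointed Heegaard diagram for $(\Sigma^2(T_{2,p}),\widetilde{T}_{2,p})$ associated to the standard alternating projection of $T_{2,p}$ --- the diagram underlying \cite[Thm.~4.3]{grigsby2006combinatorial} --- and run the Maslov/Alexander grading bookkeeping that proves alternating knots in $S^3$ are thin; the new content is that it goes through uniformly in $\s$, Grigsby having recorded only $\s=\s_0$.

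Next I would identify the graded Euler characteristic of $\widehat{HFK}(L(p,1),\widetilde{T}_{2,p},\s_0+h)$. Up to units this is the Reidemeister--Turaev torsion of $(L(p,1),\widetilde{T}_{2,p})$ refined by $\s_0+h$; I would compute it from a $2$-bridge presentation of $\pi_1(L(p,1)\setminus\widetilde{T}_{2,p})$ by Fox calculus, or from the relation between the (twisted) Alexander polynomial of a lifted branch locus and that of the base knot. I anticipate the answer $\Delta_{T_{2,p-2h}}(t)$, up to units: a polynomial with all coefficients $\pm1$, supported in $p-2h$ consecutive degrees. Combined with thinness this forces $\widehat{HFK}(L(p,1),\widetilde{T}_{2,p},\s_0+h)\cong\F$ in each of $p-2h$ consecutive Alexander gradings, the Maslov gradings decreasing by one along the diagonal, i.e.\ $\widehat{HFK}(T_{2,p-2h})$ up to an overall grading shift.

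Finally one promotes this to $CFK^\infty$. Since $L(p,1)$ is an $L$-space, $\widehat{HF}(L(p,1),\s_0+h)$ has rank one; as the complex is thin with $\widehat{HFK}$ of rank at most one in each Alexander grading, the classification of thin complexes over $\Laur$ admits no box summands (a box contributes rank two in some Alexander grading), so $CFK^\infty(L(p,1),\widetilde{T}_{2,p},\s_0+h)$ is quasi-isomorphic to a single staircase whose steps all have length one. Once the chirality of the staircase is fixed by the computed value of $\delta$ --- equivalently by the induced $\tau$, or by $d(L(p,1),\s_0+h)$ --- the only such staircase with $p-2h$ generators is $CFK^\infty(T_{2,p-2h})$. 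Upgrading to a bigraded quasi-isomorphism is then a finite bookkeeping matter: the Alexander grading matches after the symmetric normalisation, and the absolute Maslov grading is pinned by $d(L(p,1),\s_0+h)$, consistently with Grigsby's conventions and with the recursion in $h$. I expect the genuine obstacle to be the thinness step; everything downstream is essentially forced by rank and Euler-characteristic considerations, the other delicate point being the torsion computation, where one needs the precise polynomial and not just its value at $t=1$.
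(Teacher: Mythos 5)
Your strategy is genuinely different from the paper's. The paper proves Theorem \ref{thm:interi} by brute force: it works with the Baker--Grigsby--Hedden twisted grid diagram for $(L(p,1),\widetilde{T}_{2,p})$, determines the differential generator by generator in each $\spinc$ structure (Lemma \ref{lemma:massimodue} bounds the number of empty rectangles), organizes the outcome into the model complexes $C[h,p-2h]$ (two ``electric poles'' of height $h$ fused to a ``wire'' of length $p-2h$), shortens the poles by an explicit filtered change of basis to reach $C[0,p-2h]$, and finally invokes the Baker--Grigsby--Hedden equivalence of grid and holomorphic complexes together with Grigsby's isomorphism $L$ promoted to the $\infty$ flavour. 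Your route instead wants to deduce the answer from thinness in every $\spinc$ structure plus the $\spinc$-refined Euler characteristic plus the classification of thin complexes. The downstream algebra in your sketch is sound: granted thinness and rank one in each of $p-2h$ consecutive Alexander gradings, box summands are excluded (a box does contribute rank two in a single Alexander grading), the surviving complex is a step-length-one staircase of length $p-2h$, its chirality is pinned by $\delta$ (equivalently $\tau$ or $d$), and Petkova-type structure results transfer to knot-type complexes over rational homology spheres, as the paper itself uses when passing from $HFK^\infty$ to the filtered chain homotopy type.

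The problem is that the two load-bearing inputs are only asserted, and neither is available off the shelf. (a) Thinness of $\widehat{HFK}(L(p,1),\widetilde{T}_{2,p},\s)$ for \emph{every} $\s$: Grigsby's theorem covers only $\s_0$, and the Ozsv\'ath--Szab\'o thinness argument for alternating knots in $S^3$ rests on grading identities tied to the alternating projection and the signature which you would have to re-derive for the lifted diagram; the paper obtains thinness in all $\spinc$ structures only as a \emph{corollary} of Theorem \ref{thm:interi}, so it cannot be cited as an input. You yourself flag this as ``the genuine obstacle'', but offer no argument beyond ``run the bookkeeping''. (b) The $\spinc$-refined Euler characteristic: you ``anticipate'' it equals $\pm\Delta_{T_{2,p-2h}}(t)$ supported in $p-2h$ consecutive degrees, but no computation is given. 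This is a real Turaev-torsion/Fox-calculus calculation, and in addition you must match Turaev's $\spinc$ labelling with the affine identification $\Spinc(L(p,1))\cong\Z/p\Z$ fixed by $\s_0$ that appears in the statement -- exactly the place where indexing errors creep in; note that the unrefined formula $\Delta_{\widetilde{K}}(x)=\prod_i\Delta_K(\omega^i x^{1/2})$ used in Lemma \ref{folklore} gives no per-$\spinc$ information. Until (a) and (b) are actually proved, your argument does not establish the theorem; these are precisely the pieces that the paper's hands-on grid computation was designed to supply.
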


This result determines $HFK^\circ(L(p,1), \widetilde{T}_{2,p})$ as a graded group,
and implies that the lift of an alternating torus knot is \emph{thin}
in each $\spinc$ structure (cf. Section $6$ of
\cite{levine2008computing}). 

Using \cite[Lemmas~5~and~7]{Petkova}, this result implies that the
corresponding graded, $\Z\oplus \Z$-filtered chain complexes $\CFKinf (L(p,1),
\widetilde{T}_{2,p}, \s_0 \pm h)$ and $\CFKinf (T_{2,p-2h})$ are
graded, $\Z \oplus \Z$-filtered chain homotopy equivalent. Thus, as a
consequence of the previous computation, we can easily obtain the
following results. (More general results, encompassing all alternating
knots, will be presented in~\cite{pacana}.)

\begin{cor}\label{cor:tautorici} 
For an alternating torus knot $K= T_{2,2n+1}$  we have that for
$t\in [0,2]$
\[ \ \ \ \  \ \ \ \  \ \ \ \ \ \  \ \Upsilon_{K, \s_0+ h}(t)= (|h|-n) \cdot (1-|t-1|) \ \ \ \ \ \  -n \leq h \leq n \ . \]
\end{cor}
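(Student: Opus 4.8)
The plan is to transport the computation to torus knots in $S^3$ via Theorem~\ref{thm:interi}. Recall that the upsilon function of a null-homologous knot in a rational homology sphere, as constructed in the first part of the paper (in the spirit of \cite{OSS4, alfieri1, hom2018knot}), is an invariant of the graded, $\Z \oplus \Z$-filtered chain homotopy type of the associated complex $\CFKinf$. Since $p = 2n+1$, Theorem~\ref{thm:interi} together with \cite[Lemmas~5~and~7]{Petkova} shows that for $h \in \{0,\dots,n\}$ the complex $\CFKinf(L(p,1), \widetilde{T}_{2,p}, \s_0 \pm h)$ is graded, $\Z \oplus \Z$-filtered chain homotopy equivalent to the knot Floer complex $\CFKinf(T_{2,\,p-2h})$ of the torus knot $T_{2,\,p-2h} \subset S^3$. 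So the first step is simply to invoke filtered-homotopy invariance of $\Upsilon$ to conclude
\[
\Upsilon_{T_{2,2n+1},\,\s_0 \pm h}(t) \;=\; \Upsilon_{T_{2,\,p-2h}}(t),
\]
the right-hand side being the ordinary upsilon function of a knot in $S^3$.

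Next I would evaluate the right-hand side. Writing $p - 2h = 2(n-h)+1$, the knot $T_{2,\,p-2h}$ is the positive $(2, 2(n-h)+1)$ torus knot (the unknot when $h = n$), which is alternating, hence thin, with $\tau = n - h \ge 0$. For a thin knot $J$ the complex $\CFKinf(J)$ is filtered chain homotopy equivalent to a single staircase summand determined by $\tau(J)$, together with a number of acyclic square summands which contribute nothing to $\Upsilon$; consequently $\Upsilon_J(t) = -\tau(J)\,(1 - |t-1|)$ on $[0,2]$. (For $T_{2,2k+1}$ this piecewise-linear formula is also recorded directly in \cite{OSS4}.) Applying it with $\tau = n - h$ gives $\Upsilon_{T_{2,\,p-2h}}(t) = -(n-h)(1-|t-1|)$.

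To finish, I would combine the two displays and keep track of the absolute value: in Theorem~\ref{thm:interi} both $\s_0 + h$ and $\s_0 - h$ (for $h \in \{0,\dots,n\}$) are modelled by the \emph{same} torus knot $T_{2,\,p-2h}$, so both carry $\tau = n - h$; writing a general spin$^c$ structure of this family as $\s_0 + h$ with $-n \le h \le n$, the relevant value of $\tau$ becomes $n - |h|$, and therefore
\[
\Upsilon_{T_{2,2n+1},\,\s_0 + h}(t) \;=\; -(n - |h|)\,(1 - |t-1|) \;=\; (|h| - n)\,(1 - |t-1|), \qquad -n \le h \le n,
\]
which is the asserted formula. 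I do not expect a real obstacle here — the corollary is a genuine consequence of Theorem~\ref{thm:interi}; the only point needing care is the bookkeeping, namely checking that the bi-graded quasi-isomorphisms of Theorem~\ref{thm:interi} are compatible with the grading normalization entering the definition of $\Upsilon$ used in this paper, and that the two families $\s_0 \pm h$ are matched correctly with the exponent $|h|$ in the formula. The remaining ingredients — upgrading the quasi-isomorphism to a filtered homotopy equivalence via \cite[Lemmas~5~and~7]{Petkova}, the thin-complex formula for $\Upsilon$, and the identity $\tau(T_{2,2k+1}) = k$ — are standard, and were essentially recorded in the discussion preceding the corollary.
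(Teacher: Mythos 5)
Your proposal is correct and follows essentially the same route the paper intends: Theorem~\ref{thm:interi} upgraded to a graded, $\Z\oplus\Z$-filtered chain homotopy equivalence via \cite[Lemmas~5~and~7]{Petkova}, then filtered-homotopy invariance of $\Upsilon$ together with the known formula $\Upsilon_{T_{2,2k+1}}(t)=-k(1-|t-1|)$ (equivalently $\tau=k$ for these thin knots) and the conjugation symmetry $\s_0\pm h$ giving the $|h|$ in the statement. The only point worth noting is the one you already flag: the Maslov-grading shift by the correction term $d(L(p,1),\s_0\pm h)$ does not affect $\Upsilon$, since the invariant only depends on the bi-filtration levels of the cycles generating the top tower, not on the absolute grading.
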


Theorem \ref{thm:interi} can also be applied to compute the $\tau$-invariants studied in \cite{raoux2016tau}.
\begin{cor}
For $h=0, \dots , n$ one has
\[\tau_{\s_0+h}(\widetilde{T}_{2,2n+1}) = \tau(T_{2,2n+1-2 h})= - \frac{1}{2} \cdot \sigma(T_{2,2n+1-2 h}) = n - h,\]  
\end{cor}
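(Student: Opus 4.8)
The plan is to establish the three displayed equalities in turn: the first is the real content and will follow from Theorem~\ref{thm:interi}, while the other two are classical facts about the torus knots $T_{2,2k+1}$.

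For the first equality, I would start from the consequence of Theorem~\ref{thm:interi} recorded in the paragraph above the statement: together with \cite[Lemmas~5~and~7]{Petkova}, the bi-graded quasi-isomorphism upgrades to a graded, $\Z\oplus\Z$-filtered chain homotopy equivalence
\[
\CFKinf(L(2n+1,1),\,\widetilde{T}_{2,2n+1},\,\s_0+h)\ \simeq\ \CFKinf(T_{2,2n+1-2h}).
\]
The invariant $\tau_\s$ of Raoux \cite{raoux2016tau} is extracted precisely from this data: it records the minimal Alexander filtration level at which the inclusion of the corresponding sub-level of $\widehat{CF}$ of the ambient manifold hits the distinguished class of $\widehat{HF}$ in the given $\spinc$ structure --- here the ambient manifold is the $L$-space $L(2n+1,1)$, so that group is $\F$. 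Since this number depends only on the graded, filtered chain homotopy type, the equivalence above forces $\tau_{\s_0+h}(\widetilde{T}_{2,2n+1})=\tau(T_{2,2n+1-2h})$. Note that $2n+1-2h\ge1$ for $h\le n$, with $h=n$ giving the unknot, for which both sides are $0$.

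For the middle equality, put $m=2n+1-2h=2(n-h)+1$; then $T_{2,m}$ is alternating, so the Ozsv\'ath--Szab\'o relation $\tau(K)=-\tfrac12\sigma(K)$ for alternating knots yields $\tau(T_{2,m})=-\tfrac12\sigma(T_{2,m})$. (Equivalently, one may use that $T_{2,2k+1}$ is an $L$-space knot of Seifert genus $k$, whence $\tau(T_{2,2k+1})=k$.) For the last equality, the standard genus-$(n-h)$ Seifert surface of $T_{2,m}$ carries a Seifert form of signature $-2(n-h)$, so $\sigma(T_{2,m})=-2(n-h)$ and $-\tfrac12\sigma(T_{2,m})=n-h$. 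Chaining the three equalities proves the corollary.

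The one step needing genuine care is the first: one must make sure that Raoux's $\tau_\s$ is truly a graded, $\Z\oplus\Z$-filtered chain homotopy invariant of $\CFKinf$, and that the enumeration of $\spinc$ structures used in Theorem~\ref{thm:interi} agrees with the one in \cite{raoux2016tau} --- in particular invoking the conjugation symmetry $\tau_\s=\tau_{\bar\s}$ to treat $\s_0\pm h$ simultaneously. Everything after that is bookkeeping with standard torus-knot invariants.
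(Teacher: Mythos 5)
Your proposal is correct and follows essentially the same route as the paper: Theorem~\ref{thm:interi} together with \cite[Lemmas~5~and~7]{Petkova} gives the graded, $\Z\oplus\Z$-filtered chain homotopy equivalence, from which the equality of Raoux's $\tau_{\s_0+h}$ with $\tau(T_{2,2n+1-2h})$ is immediate, and the remaining equalities are the standard alternating/torus-knot facts. The paper treats the corollary as a direct consequence of the preceding computation, so your write-up simply makes explicit the same bookkeeping (including the conjugation symmetry handling $\s_0\pm h$) that the authors leave implicit.
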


We then work out partial calculations in the case of twist
knots.  Further computations are going to appear in
\cite{alfieri2}, using lattice cohomology techniques.

\bigskip

{\bf {Acknowledgements}}: DC acknowledges support from the
European Research Council (ERC) under the European Unions Horizon 2020
research and innovation programme (grant agreement No 674978). AA and SA
acknowledge support from the NKFIH Grant \emph{\'Elvonal}
(\emph{Frontier}) KKP 126683 and from K112735. The authors want to thank the anonymous referees for helpful comments and suggestions.

\section{An invariance principle}
\label{sec:invariance}

An \textbf{Alexander filtered, Maslov graded chain
  complex} is a finitely-generated, $\Q$-graded, $\Z \oplus
\Z$-filtered chain complex $\mathcal{K}_*= (\bigoplus_{\x \in B}
\F[U, U^{-1}], \partial )$ over the ring $\F[U,U^{-1}]$ satisfying the following properties
\begin{itemize}
\item $\partial$ is $\Laur$-linear, and given a basis element $\x \in
  B$, $\partial \x = \sum_\y n_{\x, \y}U^{m_{\x,\y}} \cdot \y$ for
  suitable coefficients $ n_{\x, \y} \in \F$, and non-negative
  exponents $m_{\x, \y} \geq 0$,
\item the multiplication by $U$ drops the homological (Maslov) grading $M$ by two, and the filtration levels (denoted by $A$ and $j$) by one.
\end{itemize} 
An Alexander filtered, Maslov graded chain complex is said to be of
\textbf{knot type} if in addition $H_*(\mathcal{K}_*, \partial)= \F[U,
  U^{-1}]$ graded so that $\text{gr}(1)=d(\K_*)$, for some
$d(\mathcal{K}_*)\in \Q$. The number $d(\mathcal{K}_*)$ is the
\textbf{correction term} of $\mathcal{K}_*$.  An Alexander filtered,
Maslov graded chain complex $\mathcal{K}_*$ can be pictorially
described as follows:
\begin{itemize}
\item represent each $\F$-generator $U^m \cdot \x$ of $\mathcal{K}_*$ as a point on the planar lattice $\Z \times \Z \subset \R^2$ in position 
$\left(-m, A(\x)-m \right) \in \Z \times \Z$,
\item label each $\Z_2$-generator $U^m \cdot \x$ of $\mathcal{K}_*$ with its Maslov grading $M(\x)-2m\in \Z$,
\item connect two $\F$-generators $U^n \cdot \x$ and $U^m \cdot \y $ with a directed arrow if in the differential of $U^n \cdot \x$ the coefficient of $U^m \cdot \y$ is non-zero.
\end{itemize}

Two knot type complexes $\mathcal{K}_1$ and $\mathcal{K}_2$ are said
to be \textbf{stably equivalent}, denoted by $\mathcal{K}_1 \sim
\mathcal{K}_2$, if there exist two graded, $\Z\oplus \Z$-filtered
acyclic chain complexes $\mathcal{A}_1$ and $\mathcal{A}_2$ such that
$\mathcal{K}_1 \oplus \mathcal{A}_1 \simeq \mathcal{K}_2 \oplus
\mathcal{A}_2$, where $\simeq$ denotes (graded, $\Z\oplus \Z$-filtered) 
chain homotopy
equivalence. Denote by $\CFK$ the set of knot type complexes up to
chain homotopy equivalence. The quotient set $\CFK /_\sim$
has a natural group structure, with operation given by tensor product,
and identity represented by the chain complex 
$\F [U, U^{-1}]$, equipped with $\partial =0$ and the trivial
filtration; the inverse of the class of a complex $\mathcal{K}_*$ is
represented by its dual complex $\text{Hom}(\mathcal{K}_*,\F[U,
  U^{-1}])$.  Let $\mathcalCFK$ denote this Abelian group. 

Let $K$ be a null-homologous oriented knot in a rational homology
sphere ($\Q HS^3$) $Y$.  Knot Floer homology \cite{OS7}
associates to the pair $(Y,K)$ a finitely generated, $\Q$-graded, $\Z
\oplus \Z$-filtered chain complex 
$$\CFKinf (Y, K)= \left(\bigoplus_{\x \in B} \F[U, U^{-1}] \cdot \x, \partial \right)$$ 
which is an Alexander filtered, Maslov graded chain complex in the above sense. 

For each basis element $\x \in B$, there is an associated $\spinc$
  structure $\s(\x) \in \Spinc (Y)$, and for each $\spinc$ structure
  $\s$ of $Y$
\[
\CFKinf (Y,K, \s) = \bigoplus_{\s(\x)= \s} \F[ U, U^{-1}] \cdot \x
\] 
is a subcomplex of the knot Floer complex $\CFKinf (Y, K)$,
and
\[
\CFKinf (Y,K) = \bigoplus_{\s\in \Spinc (Y)}  \CFKinf (Y,K, \s). 
\]
The chain
complex $\CFKinf (Y,K, \s) $ satisfies 
\[H_*(\CFKinf (Y, K, \s))= HF^\infty(Y, \s)=\Laur\] 
graded so that $\text{gr}(1)= d$, where
$d=d(Y, \s)$ denotes the correction term of $(Y,\s)$ as defined in
\cite{OS24}. In conclusion, 
$\CFKinf (Y,K, \s)$ is a chain complex of knot type.

In \cite{OS7} Ozsv\' ath and Szab\' o proved that the $\Q$-graded,
$\Z \oplus \Z$-filtered chain homotopy type of $\CFKinf (Y, K,
\s)$ only depends on the diffeomorphism type of the pair $(Y,K)$, and
on the chosen $\spinc$ structure $\s$.  By forgetting the $A$-filtration,
and taking the homology of the $\F[U]$-submodule $j\leq 0$ we get the
invariant $HF^-(Y,\s )$ of the ambient 3-manifold $Y$.
If $(Y,\s)$ is a $\spinc$ $\Q HS^3$, then
$HF^-(Y,\s) = \F [U]_{(d)}\oplus HF_{red}(Y,\s )$, where 
$\F [U]_{(d)}$ is graded
so that $1$ is in degree $d=d(Y,\s)$; this is commonly referred to as a \emph{tower}. 
The second summand $HF_{red}(Y,\s)$ is the \emph{reduced} Heegaard Floer
homology of $(Y,\s)$, and it is a 
finitely generated $\F$-module.

A coarser equivalence relation among the pairs $(Y,K)$, generalizing
usual knot concordance, is defined as follows (see also \cite{CHA} for
a general reference on rational concordance).
\begin{defi}\label{ratconcordance}
 For $i=0,1$ let $Y_i$ be a rational homology sphere, $K_i \subset
 Y_i$ a null-homologous knot, and $\s_i \in \Spinc (Y_i)$. We will say
 that $K_0$ and $K_1$ are \textbf{$\spinc$ rationally concordant}, if
 there exists a smooth $\spinc$ rational homology cobordism $(W,
 \mathfrak{t})$ from $(Y_0, \s_0)$ to $(Y_1, \s_1)$, and a smoothly
 properly embedded cylinder $C \subset W$ such that $\partial C= C
 \cap \partial W = K_1 \sqcup -K_0$.
\end{defi}

A null-homologous knot $K$ in a $\spinc$ rational homology sphere
$(Y,\s)$ is \textbf{rationally slice} if there exists a $\spinc$
rational homology ball $(W, \mathfrak{t})$ bounding $(Y, \s)$
containing a smoothly properly embedded disk $\Delta \subset W$ such
that $\partial \Delta = \Delta \cap \partial W=
K$. 

Let $\mathcal{C}_\Q$ denote the set of triples $(Y, K,
\s)$, where $Y$ is a rational homology sphere, $K \subset Y$ is a
null-homologous knot, and $\s \in \Spinc (Y)$, considered up to
rational concordance. $\mathcal{C}_\Q$ has a group structure induced
by connected sum
\[(Y_0, K_0, \s_0)\# (Y_1, K_1, \s_1)= (Y_0\# Y_1, K_0\# K_1, \s_0\# \s_1) \ . \]  
In this group structure rationally slice knots represent the trivial element, and the inverse corresponds to taking the mirror image $-(Y, K,
\s)= (-Y, -K, \overline{\s})$. Here $-Y$ stands for the three-manifold $Y$ with its reversed
orientation, and $-K$ is the knot $K$ with its orientation reversed.

The main goal of this section is to prove the following theorem,
generalizing 
a result of Hom~\cite[Theorem 1]{hom2017survey} to rational homology spheres,
\emph{cf}.~also \cite[Section~4]{hom2018knot}.
\begin{thm} \label{invariance} 
Let $(Y_0,K_0,\s_0)$ and $(Y_1,K_1,\s_1)$ be two $\spinc$ rationally
concordant knots. Then the chain complexes $\CFKinf (Y_0, K_0, \s_0)$
and $\CFKinf (Y_1, K_1, \s_1) $ are stably equivalent, that is, 
there exist $\Q$-graded, $\Z \oplus
\Z$-filtered, acyclic chain complexes ${\mathcal {A}}_0$ and
${\mathcal {A}}_1$ (that is, $H_*({\mathcal {A}}_i)=0$) such
that $$\CFKinf (Y_0, K_0, \s_0) \oplus {\mathcal {A}}_0 \simeq \CFKinf
(Y_1, K_1, \s_1) \oplus {\mathcal {A}}_1,$$ where $\simeq$ denotes
graded $\Z\oplus \Z$-filtered chain homotopy equivalence.
\end{thm}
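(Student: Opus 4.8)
The plan is to mimic the classical $S^3$ argument of Ozsváth--Szabó (as reworked by Hom), replacing the slice disk/concordance in $S^3\times[0,1]$ by the cylinder $C$ inside the rational homology cobordism $W$, and checking that every map used in that argument survives in this more general setting. First I would consider the two ends of $(W,\mathfrak{t})$ with the embedded cylinder $C$; removing a neighborhood of $C$ and looking at the induced cobordism of surgered manifolds, one gets, for each of the standard large surgery coefficients, a $\spinc$ cobordism $W_\lambda$ from $S^3_\lambda(K_0)$-type manifolds to $S^3_\lambda(K_1)$-type manifolds that is still a rational homology cobordism (this uses precisely that $W$ is a rational homology cobordism and that $C$ is a properly embedded annulus with the correct boundary). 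The associated cobordism maps on $HF^-$ are then graded $\F[U]$-module maps which, because the $HF^\infty$ of each end is a single tower of the same correction term, must be isomorphisms on the tower part; this is the key input that makes the surgery-formula comparison go through.

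Next I would invoke the (rational) large surgery formula identifying $HF^-$ of large surgeries with the homology of suitable truncations/quotients of $\CFKinf(Y_i,K_i,\s_i)$ (this is available for null-homologous knots in $\Q HS^3$'s with the appropriate $\spinc$ bookkeeping), and transport the cobordism maps to maps between these truncated complexes. Running this over all large surgery coefficients and assembling the information exactly as in Hom's proof, one produces a pair of graded, $(\Z\oplus\Z)$-filtered chain maps $f\colon \CFKinf(Y_0,K_0,\s_0)\to \CFKinf(Y_1,K_1,\s_1)$ and $g$ in the other direction, each inducing an isomorphism on the total homology $HF^\infty=\Laur$ (with matching grading shift, which is $0$ since the correction terms agree across a rational homology cobordism). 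The standard algebraic lemma then says that a graded filtered chain map between knot type complexes inducing an isomorphism on homology is, after adding acyclic summands, part of a filtered chain homotopy equivalence; applying this to $f$ (or to $g\circ f$ and $f\circ g$) yields acyclic $\mathcal{A}_0,\mathcal{A}_1$ with $\CFKinf(Y_0,K_0,\s_0)\oplus\mathcal{A}_0\simeq\CFKinf(Y_1,K_1,\s_1)\oplus\mathcal{A}_1$, which is exactly the claim.

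The main obstacle I expect is not the algebra at the end but the geometric/bookkeeping step in the middle: verifying that the surgery cobordism obtained from $(W,C)$ genuinely is a rational homology cobordism between the relevant surgered $3$-manifolds, that the $\spinc$ structure $\mathfrak{t}$ restricts correctly to the $\spinc$ structures appearing in the surgery formula, and that the naturality of the cobordism maps is compatible with the filtration so that one really gets \emph{filtered} chain maps and not merely maps of the associated graded. In the $S^3$ case these points are handled by explicit Kirby-calculus pictures and the adjunction/blow-up formula; here one must re-derive them using only the homological hypotheses on $W$, and in particular one must be careful that the "cylinder" $C$ being null-homologous and properly embedded suffices to control the homology of the complement and the behavior of $\spinc$ structures. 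Once that is in place, the rest of the argument is formally identical to the known proof, so I would state the geometric lemma carefully, prove it, and then cite the algebraic portion of Hom's argument essentially verbatim.
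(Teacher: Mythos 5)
Your route is genuinely different from the paper's, and as written it has a gap at its central step. The paper never constructs chain-level maps out of the concordance: it reduces Theorem~\ref{invariance}, via the connected-sum/K\"unneth trick applied to $\CFKinf(Y_0\#(-Y_1)\#Y_1,\,K_0\#(-K_1)\#K_1,\,\s_0\#\overline{\s}_1\#\s_1)$, to Lemma~\ref{rationalknots} (a rationally slice knot has $\CFKinf(Y,K,\s)\simeq \F[U,U^{-1}]\oplus\mathcal{A}$ with $\mathcal{A}$ acyclic); the surgery cobordism you describe (two-handles attached to $W$ along $K_0$ and $K_1$, then surgering out the sphere $\Delta_0\cup C\cup-\Delta_1$) enters only at the level of $d$-invariants, to show $V_K(m,\s)$ is a rational concordance invariant (Proposition~\ref{correctionterms} and Corollary~\ref{vanishing}), which produces a generating cycle of bifiltration level $(0,0)$ that a filtered change of basis splits off. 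Your plan instead is to prove local equivalence and convert it into stable equivalence; that is a legitimate alternative, and the paper records it (Zemke's theorem together with the Proposition that local and stable equivalence coincide), but your middle step does not deliver the needed maps. The large surgery formula plus cobordism maps on $HF^-$ of the surgered manifolds only give, for each Alexander index $s$, a map between the corresponding subquotient complexes, defined up to homotopy, with no stated compatibility as $s$ varies or with the $U$-action; ``assembling the information exactly as in Hom's proof'' is not available, because Hom's proof contains no such assembly --- it is precisely the connected-sum/splitting argument the paper adapts. Producing an honest graded, $(\Z\oplus\Z)$-filtered chain map $\CFKinf(Y_0,K_0,\s_0)\to\CFKinf(Y_1,K_1,\s_1)$ from the concordance requires Zemke's link-cobordism functoriality (a decorated concordance in a $\spinc$ rational homology cobordism induces a filtered, grading-preserving map), which is exactly the external theorem the paper cites for local equivalence; you would have to cite or reprove it, not re-derive it by Kirby-calculus bookkeeping.

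The concluding algebraic step is also misstated. A single graded, filtered chain map between knot type complexes inducing an isomorphism on homology does \emph{not} imply stable equivalence: the complex of the right-handed trefoil, with generators $a,b,c$ at filtration levels $(0,1),(0,0),(0,-1)$ and $\partial b=Ua+c$, admits such a map to the unknot complex (send $a\mapsto 1$, $c\mapsto U$, $b\mapsto 0$), yet the two complexes are not stably equivalent since $\tau$, $V_0$ and $\Upsilon$ differ. What you actually need is that maps in \emph{both} directions (local equivalence) imply stable equivalence, and even this is not formal: the paper deduces it from Lemma~3.3 of Hendricks--Manolescu--Zemke in the final Proposition of Section~\ref{sec:invariance}. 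Since you do construct both $f$ and $g$, this second gap is repairable by invoking that lemma in place of your ``standard algebraic lemma''; the first gap, however, stands: either supply the chain-level cobordism maps via Zemke, or follow the paper's route through the rationally slice splitting lemma, whose only geometric input is the vanishing of $V_K(0,\s)$.
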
 

Theorem~\ref{invariance} can be deduced from the following lemma.

\begin{lem} \label{rationalknots} 
Let $K$ be a null-homologous knot in a rational homology sphere $Y$, and $\s \in \Spinc (Y)$. If $K$ is rationally slice, then there exists a $\Q$-graded, $\Z \oplus \Z$-filtered, acyclic chain complex ${\mathcal {A}}$ such that 
\[
\CFKinf (Y, K, \s)\simeq \CFKinfty (S^3, U, \mathfrak{u}) \oplus
        {\mathcal {A}} \ ,
\]
where $U \subset S^3$ denotes the unknot, and $\mathfrak{u}$ denotes
the unique $\spinc$ structure of $S^3$.
\end{lem}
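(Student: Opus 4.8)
The plan is to turn the hypothesis into a $\spinc$ rational concordance with the unknot, and then split the resulting complex off the trivial one using the maps induced by that concordance. I would begin by removing from $W$ a small open ball $B^4$ centred at an interior point of $\Delta$, in standard position with respect to $\Delta$: the complement $W_0=W\setminus B^4$ is a $\spinc$ rational homology cobordism from $(Y,\s)$ to $(S^3,\mathfrak u)$, and $C=\Delta\setminus B^4$ is a smoothly properly embedded annulus in $W_0$ with $\partial C=K\sqcup(-U)$, where $U\subset S^3$ is the unknot. Thus $(Y,K,\s)$ is $\spinc$ rationally concordant to $(S^3,U,\mathfrak u)$ in the sense of Definition~\ref{ratconcordance}, and turning $(W_0,C)$ upside down supplies a rational concordance the other way. (When $W=B^4$ one has $W_0=S^3\times I$; for a general rational homology ball $W_0$ is only a rational homology $S^3\times I$, which is what forces the extra ingredient below.)

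The second step is to use the functoriality of the knot Floer complex: a $\spinc$ rational homology cobordism carrying a smoothly properly embedded annulus between null-homologous knots should induce a $\Z$-graded, $(\Z\oplus\Z)$-filtered chain map between the complexes $\CFKinf$, well defined up to filtered chain homotopy, compatible with composition of cobordisms, and filtered chain homotopic to the identity on a product cobordism. After fixing representatives, $(W_0,C)$ and its reverse then yield filtered graded chain maps
$$F\colon \CFKinf(Y,K,\s)\longrightarrow \CFKinfty(S^3,U,\mathfrak u),\qquad G\colon \CFKinfty(S^3,U,\mathfrak u)\longrightarrow \CFKinf(Y,K,\s).$$
The composite $F\circ G$ is chain homotopic to the map obtained by stacking the two cobordisms, whose underlying $4$-manifold is a rational homology $S^3\times I$; hence the induced map on $HF^\infty(S^3,\mathfrak u)$ is an isomorphism, this being the property behind the rational homology cobordism invariance of the correction term (\emph{cf}.~\cite{OS24}). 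Since $\CFKinfty(S^3,U,\mathfrak u)$ has trivial differential — so that chain homotopic self-maps agree and a self-map is determined by its action on homology — it follows that $F\circ G=\mathrm{id}$.

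Consequently $e:=G\circ F$ satisfies $e\circ e=G\circ(F\circ G)\circ F=e$, so $e$ is an idempotent chain endomorphism of $\CFKinf(Y,K,\s)$ that is $\Z$-graded, $(\Z\oplus\Z)$-filtered, and $\Laur$-linear. Therefore $\mathrm{Im}(e)$ and ${\mathcal {A}}:=\ker e$ are Alexander filtered, Maslov graded subcomplexes with $\CFKinf(Y,K,\s)=\mathrm{Im}(e)\oplus{\mathcal {A}}$; the maps $F$ and $G$ restrict to mutually inverse isomorphisms between $\mathrm{Im}(e)$ and $\CFKinfty(S^3,U,\mathfrak u)$, so in particular $\CFKinf(Y,K,\s)\cong\CFKinfty(S^3,U,\mathfrak u)\oplus{\mathcal {A}}$ as $\Z$-graded, $(\Z\oplus\Z)$-filtered complexes. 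Moreover ${\mathcal {A}}$, being a direct summand of the finitely generated free $\Laur$-module $\CFKinf(Y,K,\s)$, is finitely generated and free. Finally, comparing ranks over $\Laur$ in $\Laur=H_*(\CFKinf(Y,K,\s))=H_*(\mathrm{Im}(e))\oplus H_*({\mathcal {A}})=\Laur\oplus H_*({\mathcal {A}})$ shows that $H_*({\mathcal {A}})$ has rank $0$; being also torsion-free (a summand of $\Laur$), it vanishes, so ${\mathcal {A}}$ is acyclic. (In passing this recovers $d(Y,\s)=0$, which also follows directly from $(Y,\s)$ bounding the rational homology ball $(W,\mathfrak t)$.)

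The main obstacle is the functoriality input used in the second step. On $\widehat{HF}$ and on $HF^\infty$ these cobordism maps are classical, but here one needs them to respect the full Alexander $(\Z\oplus\Z)$-filtration of $\CFKinf$, to be natural under stacking of rational concordances, and to be suitably normalized on products; concretely this amounts to checking that the elementary pieces — handle attachments and stabilizations — out of which such a cobordism is assembled act by filtered chain maps, following Hom's treatment of the classical case (\cite{hom2017survey}, \emph{cf}.~also \cite{hom2018knot}). Granting this package, everything else above is routine.
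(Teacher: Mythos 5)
Your proposal is correct in outline but takes a genuinely different route from the paper, and it defers its hardest step to a black box that the paper's own proof deliberately avoids. The paper follows Hom's original argument: via the large-surgery identification of Proposition~\ref{correctionterms} it shows $V_K(0,\s)=0$ for rationally slice knots (Corollary~\ref{vanishing}), applies this vanishing to both $(Y,K,\s)$ and its mirror, and combines the two through the duality between the projection $CF^+(Y,\s)\to \CFKinfty(Y,K,\s)\{ j\geq 0,\ A\geq 0\}$ and the corresponding inclusion for $(-Y,K,\overline{\s})$ to locate a cycle generating $H_*(\CFKinfty(Y,K,\s))$ at bi-filtration level exactly $(0,0)$; a filtered change of basis then splits off $\F[U,U^{-1}]$ by hand. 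You instead puncture the slice disk to get a $\spinc$ rational concordance to the unknot and invoke filtered, graded concordance-induced maps --- essentially Zemke's functoriality, which the paper only quotes later (the theorem attributed to \cite{zemke}, together with the local-versus-stable equivalence proposition) --- and then perform a clean idempotent splitting. Granting that input your argument works, and it even yields an honest filtered isomorphism rather than a homotopy equivalence; note, moreover, that you do not actually need the composition law or the normalization on product cobordisms: once you have graded, bi-filtered maps $F$ and $G$ in both directions, each inducing an isomorphism on homology (local equivalence), the composite $F\circ G$ is a degree-zero $\F[U,U^{-1}]$-linear chain automorphism of $\F[U,U^{-1}]$ (the differential being zero), hence the identity, and your idempotent argument goes through verbatim. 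Two caveats: attributing the functoriality package to Hom's treatment is inaccurate --- \cite{hom2017survey} proves the classical case by the $V_0$-type argument the paper follows, not by cobordism maps --- and the paper's route has the advantage of being self-contained modulo \cite{OS7} and the correction-term formalism, whereas yours rests on the full strength of graded, filtered link-cobordism maps.
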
  
\begin{rem}
It is not hard to see that 
$\CFKinf (S^3, U, \mathfrak{u}) \simeq \F [U, U^{-1}]$ with $\partial =0$,
$gr(1)=0$ and both the Alexander and algebraic filtration level of $1$ is
zero.
\end{rem}

\begin{proof}[Proof of Theorem \ref{invariance}] 
For $i=0,1$ let $Y_i$ be a rational homology sphere, $K_i \subset Y_i$
a null-homologous knot, and $\s_i \in \Spinc (Y_i)$. Note that 
\begin{itemize}
\item if $(Y_0, K_0, \s_0)$ and $(Y_1, K_1, \s_1)$ represent
  rationally concordant null-homologous knots then $(Y_1 \# - Y_0,
  K_1\#- K_0, \s_1 \# \overline{\s}_0)$ is rationally slice,
\item according to \cite{OS7} for $(Y_0, K_0, \s_0)$, $(Y_1, K_1,
  \s_1)$ null-homologous knots we have $\CFKinf (Y_0 \# Y_1, K_0 \#
  K_1, \s_0\# \s_1)\simeq \CFKinf (Y_0, K_0, \s_0) \otimes_{\Laur}
  \CFKinf (Y_1, K_1, \s_1)$ and the gradings and filtrations add.
\end{itemize}

 Suppose that
$K_0$ and $K_1$ are rationally concordant, and consider the chain
complex
\[
\mathcal{K}=\CFKinf (Y_0 \# (-Y_1) \# Y_1, K_0 \# (-K_1) \#K_1, \s_0\#  \overline{\s}_1 \# \s_1).
\] 
As consequence of Lemma \ref{rationalknots}, we have that
\begin{align*}
\mathcal{K} &\simeq \CFKinf(Y_0, K_0, \s_0) \otimes
\CFKinfty((-Y_1)\#Y_1, (-K_1)\# K_1, \overline{\s}_1 \# \s_1)
\\ &\simeq \CFKinfty(Y_0, K_0, \s_0) \otimes \left( \CFKinfty (S^3, U,
\mathfrak{u} ) \oplus {\mathcal {A}} \right) \\ &\simeq \left( \CFKinfty (Y_0,
K_0, \s_0) \otimes \CFKinfty(S^3, U, \mathfrak{u}) \right) \oplus
\left( \CFKinfty(Y_0, K_0, \s_0) \otimes {\mathcal {A}} \right)\\ &\simeq
\CFKinfty (Y_0 \# S^3, K \# U, \s_0 \# \mathfrak{u}) \oplus
\left(\CFKinfty (Y_0, K_0, \s_0) \otimes {\mathcal {A}} \right) \\ &= \CFKinfty
(Y_0, K_0, \s_0) \oplus \left(\CFKinfty (Y_0, K_0, \s_0) \otimes {\mathcal {A}}
\right),
\end{align*} 
for some acyclic complex ${\mathcal {A}}$. On the other hand, 
\begin{align*}
\mathcal{K}
&\simeq \CFKinfty (Y_0\#(-Y_1), K_0\# (-K_1), \s_0 \# \overline{\s}_1) \otimes  
\CFKinfty (Y_1, K_1, \s_1)\\
&\simeq \left( \CFKinfty (S^3, U, \mathfrak{u} ) \oplus {\mathcal {B}} \right) \otimes  
\CFKinfty (Y_1, K_1, \s_1) \\
&\simeq  \left(  \CFKinfty (S^3, U, \mathfrak{u}) \otimes  \CFKinfty (Y_1, K_1, \s_1)   \right)  \oplus \left( {\mathcal {B}} \otimes \CFKinfty (Y_1, K_1, \s_1) \right) \\
&\simeq  \left(  \CFKinfty (S^3 \# Y_1, U \# K_1, \mathfrak{u} \# \s_1)   \right)  \oplus \left( {\mathcal {B}} \otimes \CFKinfty (Y_1, K_1, \s_1) \right) \\
&=  \CFKinfty (Y_1, K_1, \s_1) \oplus  \left( {\mathcal {B}} \otimes \CFKinfty (Y_1, K_1, \s_1) \right), 
\end{align*}
again for some acyclic complex ${\mathcal {B}}$. Thus 
\[  \CFKinfty (Y_0, K_0, \s_0) \oplus  {\mathcal {A}}_0 \simeq \CFKinfty (Y_1, K_1, \s_1) \oplus  {\mathcal {A}}_1 \ ,\]
where ${\mathcal {A}}_0= \CFKinfty (Y_0, K_0, \s_0) \otimes {\mathcal
  {A}}$ and ${\mathcal {A}}_1= {\mathcal {B}} \otimes \CFKinfty (Y_1, K_1,
\s_1)$. Using the K\" unneth formula one concludes that ${\mathcal
  {A}}_0$ and ${\mathcal {A}}_1$ are both acyclic.
\end{proof}

For the proof of Lemma~\ref{rationalknots} we need a little
preparation.  Let $K \subseteq Y$ and $\s$ be as above, and consider the knot Floer
complex $\CFKinfty (Y, K, \s)$. For $m \geq 0$ set
\begin{equation}\label{eqn:V}
V_K(m, \s)= d(Y, \s) -2 \cdot \min_{i} \max(A(z_i)-m, j(z_i)) \ ,
\end{equation}
where $z_1, \dots , z_k \in \CFKinfty (Y,K,\s)$ are the cycles with
Maslov grading $d=d(Y, \s)$ representing the non-zero element of
$H_{d}(\CFKinfty (Y,K,\s))= \F$. Our first goal is to relate $V_K(m,
\s)$ to the correction terms of the surgeries along $K$. 

Given a null-homologous knot $K\subset Y$, define its
\emph{four-dimensional genus} $g_*(K)$ as the minimal genus of a
smooth, compact surface in $Y \times [0,1]$ with boundary $K\times \{
0\}$.  For an integer $q$, let $W_q(K)$ denote the $q$-framed
two-handle attachment along $K \times \{1\} \subset Y \times [0,1]$,
so that $\partial W_q(K) = Y_q(K) \sqcup - Y$.  Furthermore, for any
integer $m \in [-q/2, q/2]$, let $\mathfrak{t}_m$ denote the $\spinc$
structure on $W_q(K)$ extending $\s$ to $Y_q(K)$, and satisfying $
\langle c_1(\mathfrak{t}_m), [\widehat{F} ] \rangle +q=2m$, where
$\widehat{F} \subset W_q(K) $ denotes a Seifert surface $F$ for $K$,
capped-off with the core of the $2$-handle.  Finally, let
$\mathfrak{s}_m\in \Spinc(Y_q(K))$ denote the $\spinc$ structure we
get by restricting $\mathfrak{t}_m$ to $Y_q(K)$. With these notations
and definitions in place, we have the following result.

\begin{prop}\label{correctionterms}
Let $K \subseteq Y$ be a null-homologous knot in a rational homology
sphere $Y$, $\s\in \Spinc (Y)$, and fix an integer $q\geq 2g_*(K)
-1$.   Then
\[ d(Y_q(K), \mathfrak{s}_m)= \frac{(q-2m)^2-q}{4q}+ V_K(m, \s) \ .\]
\end{prop} 
\begin{proof} Let $\CFKinfty (Y,K,\s)\{ A\leq m,j \leq 0\}$ denote
the subcomplex of $\CFKinfty (Y,K)$ spanned by generators with
Alexander filtration level $A\leq m$, and algebraic filtration level
$j \leq 0$. According to \cite[Section 4]{OS7} we have that:
\begin{itemize}
\item $\CFKinfty (Y,K,\s)\{ A \leq m, j \leq 0\}$ is chain homotopy
  equivalent to $CF^-(Y_q(K), \mathfrak{s}_m)$;
\item $\CFKinfty (Y,K, \s)\{ j \leq 0\}$, the subcomplex of
  $\CFKinfty(Y,K)$ spanned by the generators with algebraic
  filtration level $j \leq 0$, is chain homotopy equivalent to the
  Heegaard Floer complex $CF^-(Y, \s)$ of the ambient $\spinc$ $3$-manifold; 
\item the inclusion $\CFKinfty(Y,K,\s)\{ A \leq m, j\leq 0\}
  \hookrightarrow \CFKinfty(Y,K, \s)\{ j \leq 0\}$ descends in
  homology to the map $F_{X,\mathfrak{t}_m}\colon HF^-(Y_q(K),
  \mathfrak{s}_m) \to HF^-(Y, \s)$ induced by the cobordism
  $X=-W_q(K)$ endowed with the $\spinc$ structure $\mathfrak{t}_m$.
\end{itemize}
By considering the degree-shift formula of~\cite{ozsvath2003absolutely}, we obtain
\[\gr(F_{X,\mathfrak{t}_m}(\xi) )- \gr(\xi)=  \frac{c_1(\mathfrak{t}_m)^2- 2 \chi(X) - 3 \sigma(X)}{4}\ ,\]
where $\xi$ denotes the generator of the free summand of $HF^-(Y_q(K),
\mathfrak{s}_m)$. Thus
\[d(Y_q(K), \mathfrak{s}_m)= d+\frac{(q-2m)^2-q}{4q}   \ , \]
where $d$ is the grading of the generator of the free summand of
$H_*(\CFKinfty(Y,K,\s)\{ A \leq m, j \leq 0\})$.  Since the inclusion
\[
\CFKinfty(Y,K,\s)\{ A \leq m, j\leq 0\} \hookrightarrow CFK^\infty(Y,K, \s)
\{ j \leq 0\} 
\] 
maps the generator of the free summand of
$HF^-(Y_q(K), \mathfrak{s}_m)$ to a $U^n$-multiple of the one of
$HF^-(Y , \s)$, if $z_1, \dots , z_k \in \CFKinfty(Y, K, \s)$ denote
the cycles with Maslov grading $M=d(Y, \s)$ representing $1 \in \F[U,
  U^{-1}]= H_*(\CFKinfty(Y, K, \s))$ then 
\begin{equation*}
 d= \max_i M(U^{n_i}\cdot z_i)= \max_i (M(z_i)- 2n_i)= d(Y, \s) -2 \min_i n_i ,
\end{equation*}
   where $n_i$ is the
minimum $n\geq 0$ such that $U^n \cdot z_i \in \CFKinfty(Y,K,\s)\{ A
\leq m, j \leq 0\}$.  

Since $n_i=\max(A(z_i)-m,j(z_i))$ this proves
that 
\[
d=d(Y, \s) -2\min_i \max(A(z_i)-m ,j(z_i))=V_K(m, \s).
\]
\end{proof}

\begin{cor} Let $K \subset Y$ be a knot in a rational homology 
sphere, $\s \in Spin^c(Y)$ and $m \in \N$. Then $V_K(m, \s)$ is a
$\spinc$ rational concordance invariant.
\end{cor}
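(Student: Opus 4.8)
The plan is to show that $V_K(m,\s)$, as defined in~\eqref{eqn:V}, is an invariant of the stable equivalence class of the knot type complex $\CFKinf(Y,K,\s)$. Granting this, the corollary follows at once from Theorem~\ref{invariance}: a $\spinc$ rational concordance from $(Y_0,K_0,\s_0)$ to $(Y_1,K_1,\s_1)$ produces $\Z$-graded, $(\Z\oplus\Z)$-filtered acyclic complexes $\mathcal{A}_0,\mathcal{A}_1$ with $\CFKinf(Y_0,K_0,\s_0)\oplus\mathcal{A}_0\simeq\CFKinf(Y_1,K_1,\s_1)\oplus\mathcal{A}_1$, so invariance under stable equivalence forces $V_{K_0}(m,\s_0)=V_{K_1}(m,\s_1)$. (One could alternatively invoke Proposition~\ref{correctionterms} together with rational homology cobordism invariance of the correction terms of large surgeries, but that route forces one to match the surgery coefficient $q$ against $2g_*(K_i)-1$ for both knots, and $g_*$ is not visibly a rational concordance invariant; the stable equivalence argument sidesteps this entirely.) Concretely, I would establish two properties of the quantity $\mathcal{K}_*\mapsto d(\mathcal{K}_*)-2\min_i\max(A(z_i)-m,j(z_i))$ attached to a knot type complex: (a) invariance under graded bi-filtered chain homotopy equivalence, and (b) invariance under taking direct sum with an acyclic, graded, $(\Z\oplus\Z)$-filtered complex.

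For (a): the correction term $d(\mathcal{K}_*)=\gr(1)\in H_*(\mathcal{K}_*)=\F[U,U^{-1}]$ is by its very definition preserved by a graded quasi-isomorphism, so the two complexes share the same $d$. If $f\colon\mathcal{K}_1\to\mathcal{K}_2$ is a graded bi-filtered chain homotopy equivalence, it carries a Maslov-grading-$d$ cycle $z$ with $[z]=1$ to a Maslov-grading-$d$ cycle $f(z)$ with $[f(z)]=1$, and being filtered it satisfies $A(f(z))\le A(z)$ and $j(f(z))\le j(z)$, hence $\max(A(f(z))-m,j(f(z)))\le\max(A(z)-m,j(z))$. Taking the minimum over all such cycles yields $\min_{\mathcal{K}_2}\le\min_{\mathcal{K}_1}$; applying the same reasoning to a homotopy inverse of $f$ gives the reverse inequality, so the value agrees on $\mathcal{K}_1$ and $\mathcal{K}_2$.

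For (b): let $\mathcal{A}_*$ be acyclic, graded, bi-filtered. Since $H_*(\mathcal{A}_*)=0$ we get $H_*(\mathcal{K}_*\oplus\mathcal{A}_*)=H_*(\mathcal{K}_*)=\F[U,U^{-1}]$ with the same grading, so $\mathcal{K}_*\oplus\mathcal{A}_*$ is again of knot type with the same correction term $d$. A Maslov-grading-$d$ cycle of $\mathcal{K}_*\oplus\mathcal{A}_*$ representing the generator has the form $z+a$ with $z$ a cycle in $\mathcal{K}_*$ representing the generator and $a$ a (automatically null-homologous) cycle in $\mathcal{A}_*$; because the filtration level of a sum supported on the complementary summands is the maximum of the two levels, $A(z+a)=\max(A(z),A(a))\ge A(z)$ and likewise $j(z+a)\ge j(z)$, so $\max(A(z+a)-m,j(z+a))\ge\max(A(z)-m,j(z))$, with equality when $a=0$. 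Hence the minimum defining the invariant is the same for $\mathcal{K}_*$ and for $\mathcal{K}_*\oplus\mathcal{A}_*$.

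Putting (a) and (b) together gives $V_{K_0}(m,\s_0)=V_{K_1}(m,\s_1)$ through the chain of equalities $V(\CFKinf(Y_0,K_0,\s_0))=V(\CFKinf(Y_0,K_0,\s_0)\oplus\mathcal{A}_0)=V(\CFKinf(Y_1,K_1,\s_1)\oplus\mathcal{A}_1)=V(\CFKinf(Y_1,K_1,\s_1))$, using (b), then (a), then (b). The only point demanding care — the main, and rather mild, obstacle — is the claim in step (b) that the Alexander and algebraic filtrations split as maxima across the direct sum decomposition; this is immediate from the fact that these filtrations are computed on a distinguished basis, but it is precisely what upgrades $A(z+a)\le A(z)$ to the equality $A(z+a)=\max(A(z),A(a))$ that the argument needs.
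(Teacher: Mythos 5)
Your steps (a) and (b) are sound as pure algebra: $V$ of a knot type complex is indeed unchanged by graded bi-filtered chain homotopy equivalence and by direct sum with an acyclic bi-filtered complex, and the filtration-of-a-sum observation you flag is correct. The problem is the overall strategy: deducing the corollary from Theorem~\ref{invariance} is circular in the logical structure of the paper. Theorem~\ref{invariance} is proved from Lemma~\ref{rationalknots}; the proof of Lemma~\ref{rationalknots} begins with ``$0=V_K(0,\s)$ for a rationally slice knot'' (and also uses $V_K(0,\overline{\s})=0$ for the mirror), i.e.\ it invokes Corollary~\ref{vanishing}; and Corollary~\ref{vanishing} is itself obtained by applying the very statement you are asked to prove (rational concordance invariance of $V_K$) to compare a rationally slice knot with the unknot. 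So you cannot use Theorem~\ref{invariance} here without first establishing $V$-invariance by other means.

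The paper's proof is the route you dismissed: given a $\spinc$ rational concordance $(W,C,\mathfrak{t})$, attach a $q$-framed $2$-handle along $K_0$ and a $(-q)$-framed one along $K_1$, observe that the two core disks together with the annulus $C$ form a sphere $S$ of square zero, and surger out $S$ to get a $\spinc$ rational homology cobordism between $Y_q(K_0)$ and $Y_q(K_1)$; rational homology cobordism invariance of correction terms plus Proposition~\ref{correctionterms} then gives $V_{K_0}(m,\s_0)=V_{K_1}(m,\s_1)$. Your stated objection to this route---that one must match $q$ against $2g_*(K_i)-1$ and $g_*$ is not a concordance invariant---is not an actual obstacle: Proposition~\ref{correctionterms} only needs $q\geq 2g_*(K_i)-1$, so one simply picks $q$ at least as large as both thresholds, as the paper does (``pick a suitably large $q$''). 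If you want to keep your algebraic argument, it can be rescued without circularity by replacing Theorem~\ref{invariance} with the local-equivalence theorem quoted from Zemke at the end of Section~\ref{sec:invariance}: your step (a) only uses filtered graded maps inducing isomorphisms on homology, so local equivalence suffices and no appeal to Lemma~\ref{rationalknots} is needed. As written, however, the proposal does not provide an independent proof.
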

\begin{proof}
For $i=0,1$ let $(Y_i, \s_i)$ be a $\spinc$ rational homology spheres,
$K_i \subset Y_i$ null-homologous knots, such that $(Y_0,K_0, \s_0)$
and $(Y_1,K_1, \s_1)$ are rationally concordant. We want to
prove that $V_{K_0}(m, \s_0)= V_{K_1}(m, \s_1)$.

Let $(W, C, \mathfrak{t})$ be a $\spinc$ rational homology cobordism
from $(Y_0,K_0, \s_0)$ to $(Y_1,K_1, \s_1)$. Pick a suitably large $q\geq 0$, glue to $W$ a $q$-framed 2-handle along $K_0 \subset  -Y_0 \subset \partial
W$ and a $(-q)$-framed 2-handle along $K_1\subset
Y_1 \subset \partial W$. Denote by $W'$ the resulting cobordism
from $Y_q(K_0)$ to $Y_q(K_1)$. Note that $W'$ is naturally
equipped with a $\spinc$ structure $\mathfrak{t}'$ agreeing with
$\mathfrak{t}$ on $W \subset W'$, and restricts to the $\spinc$
structure $\s_m$ of Proposition \ref{correctionterms} on its two boundary components.

Clearly $W'$ is not a rational homology cobordism, as $H_2(W'; \Z)=\Z^2$ is
generated by the homology classes of the two $2$-handles we attached
along $K_0$ and $K_1$. For $i=0, 1$ let $\Delta_i \subset W'$ be the
core disk of the $2$-handle attached along $K_i$.  Set $S= \Delta_0 \cup
C \cup -\Delta_1$; then $S$ is a two-sphere with $S \cdot S=0$.
Surgering out $S$ we get a rational homology cobordism $X$ from $Y_q(K_0)$ to $Y_q(K_1)$ with $H_1(X; \Z)= \Z/q \Z$. Since by
construction $\langle c_1(\mathfrak{t}') , [S] \rangle=0$, the
$\spinc$ structure $\mathfrak{t}'|_{W' - \nu S}$ extends to a $\spinc$
structure $\mathfrak{t}_X$ of $X= (W' -\nu S) \cup_\partial S^1 \times
B^3$. Summarizing, the pair $(X, \mathfrak{t}_X)$ provides a $\spinc$
rational homology cobordism from $(Y_q(K_0), \s_m)$ to $(Y_q(K_1),
\s_m)$. This implies that $d(Y_q(K_0), \s_m)=d(Y_q(K_1),
\s_m)$, and the claim now follows from Proposition \ref{correctionterms}.
\end{proof}

\begin{cor}\label{vanishing}$V_K(m, \s) \equiv 0$ for a rationally slice knot $(Y, K, \s)$. 
\end{cor}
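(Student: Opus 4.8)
The plan is to deduce the statement from the preceding corollary (rational concordance invariance of $V_K(m,\s)$) together with a direct computation for the unknot. First I would record that a rationally slice knot $(Y,K,\s)$ is $\spinc$ rationally concordant to the unknot $(S^3,U,\mathfrak u)$ in the sense of Definition~\ref{ratconcordance}: if $(W,\mathfrak t)$ is a $\spinc$ rational homology ball bounding $(Y,\s)$ and containing a properly embedded disk $\Delta$ with $\partial\Delta=K$, then deleting a small open ball $B$ around an interior point of $\Delta$ yields a smooth $\spinc$ rational homology cobordism $(W\setminus B,\mathfrak t)$ from $(Y,\s)$ to $(S^3,\mathfrak u)$ --- a Mayer--Vietoris computation shows that $W\setminus B$ has the rational homology of $S^3\times[0,1]$ --- while $\Delta\setminus B=\Delta\cap(W\setminus B)$ is a properly embedded cylinder with boundary $K\sqcup -U$, exactly as required by the definition.

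By the preceding corollary it then follows that $V_K(m,\s)=V_U(m,\mathfrak u)$ for every $m\in\N$, so it remains to evaluate $V_U(m,\mathfrak u)$. Here I would invoke the Remark following Lemma~\ref{rationalknots}: $\CFKinf(S^3,U,\mathfrak u)\simeq\F[U,U^{-1}]$ with $\partial=0$, $\gr(1)=0$ and $A(1)=j(1)=0$; in particular $d(S^3,\mathfrak u)=0$. In this complex the only nonzero element of Maslov grading $d=0$ is $1$ (since $U^n\cdot 1$ lies in grading $-2n$), and it generates $H_*=\F[U,U^{-1}]$, so the minimum in \eqref{eqn:V} reduces to the single term $\max(A(1)-m,\,j(1))=\max(-m,0)$, which equals $0$ because $V_U$ is only defined for $m\ge 0$. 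Substituting into \eqref{eqn:V} gives $V_U(m,\mathfrak u)=d(S^3,\mathfrak u)-2\cdot 0=0$, hence $V_K(m,\s)=0$.

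There is no substantive obstacle here: the corollary is a formal consequence of Proposition~\ref{correctionterms} (via the invariance corollary) and the explicit shape of the unknot complex. The only point I would state carefully is that $V_K(m,\s)$ is defined only for $m\ge 0$, which is precisely what forces $\max(-m,0)=0$. If one preferred to bypass the reduction to the unknot, one could instead apply Lemma~\ref{rationalknots} directly: writing $\CFKinf(Y,K,\s)\simeq\CFKinf(S^3,U,\mathfrak u)\oplus\mathcal A$ with $\mathcal A$ acyclic forces $d(Y,\s)=0$, and every Maslov grading $0$ cycle representing the generator has the form $1+a$ with $a$ a cycle in $\mathcal A$, so that $\max(A-m,j)\ge j\ge j(1)=0$ with equality at $a=0$; this again yields $V_K(m,\s)=0$. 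I would present the first route, as it is shorter.
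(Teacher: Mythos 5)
Your proof is correct and follows the same route as the paper: reduce to the unknot via rational concordance invariance of $V_K(m,\s)$ and then compute $V_U(m,\mathfrak u)=-2\max(-m,0)=0$ from the explicit unknot complex. The only difference is that you spell out the (standard) ball-removal argument showing a rationally slice knot is rationally concordant to $(S^3,U,\mathfrak u)$, which the paper leaves implicit.
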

\begin{proof} A rationally slice knot is rationally concordant to the unknot  $(S^3, U , \mathfrak{u)}$. For the unknot one has  
\[  \CFKinfty(S^3, U , \mathfrak{u)}= \F[U,U^{-1}] \cdot z, \ \ \ \partial z =0 \]
graded so that $A(z)=j(z)=M(z)=0$. Thus, 
\begin{equation*}
V_U(m, \mathfrak{u})= d(S^3, \mathfrak{u})-2 \max (A(z)-m, j(z))=-2 \max( -m , 0)=0   ,
\end{equation*}
for every $m \geq 0$.
\end{proof}

\begin{proof}[Proof of Lemma \ref{rationalknots}] 
The proof is a simple adaptation of \cite[Proposition~11]{hom2017survey}.
Our first task is to find a cycle $\xi$ with bi-filtration level $(0,0)$. 
If $(Y, K, \s)$ is rationally slice then
\[ 0=V_K(0,\s)=d(Y, \s) -2 \cdot \min_\xi \max(A(\xi), j(\xi))= -2 \cdot \min_{\xi} \max(A(\xi), j(\xi)) \]
where the minimum is taken over all cycles $\xi \in \CFKinfty(Y, K,
\s)$ having Maslov grading $M(\xi)=d(Y, \s)=0$. Here $d(Y, \s)=0$,
since $(Y,\s)$ bounds a $\spinc$ rational homology disk.  Thus, we can find a
cycle $\xi$ with Maslov grading zero representing the generator of $H_0(\CFKinfty(Y, K, \s) )
\simeq \Z_2$ such that $\max(A(\xi), j(\xi))=0$.
 
We claim that the Alexander and algebraic filtration levels of $\xi$
are both zero.  Notice that $\xi^+$, the projection of $\xi$ on
$\CFKinfty(Y, K, \s)\{ j \geq 0\} \simeq CF^+(Y,\s)$, represents the
generator of the tower of $HF^+(Y, \s)$. If we prove that the
projection of $\xi^+$ on $\CFKinfty(Y, K, \s)\{ j \geq 0, A\geq 0\}$
is non-zero we are done. To this end, note that the
projection
\[ \pi\colon CF^+(Y, \s) =\CFKinfty(Y, K, \s)\{ j \geq 0\} \to  \CFKinfty(Y, K, \s)\{ j \geq 0,\ A\geq 0\}\]
is dual (\emph{cf.}~\cite{OS7}) to the inclusion
\[ \iota \colon \CFKinfty(-Y, K, \overline{\s})\{ j \leq 0,\ A\leq 0\} \to CF^-(-Y, \overline{\s}) =\CFKinfty(-Y, K, \overline{\s})\{ j \leq 0\} \ . \]
Since the knot $-K$, viewed now in $-Y$, is also rationally slice, we
have $V_{-K}(0, \overline{\s}) =0$. This implies that $\iota$ is surjective on the free
summand, hence $\pi$ is injective on the tower, leading to the
conclusion that $\pi(\xi^+)\not=0$. This proves that the top component
of $\xi$ has bi-filtration level $(0,0)$.

Let $\x_0, \dots, \x_m$ denotes a basis $\widehat{CFK}(Y, K, \s)=\CFKinfty(Y, K, \s)\{ j =0\}$, \text{e.g.} the one coming from the Heegaard diagram. Since $\xi$ has bi-filtration level $(0,0)$ there is a basis element, say $\x_0$, in bi-filtration level $(0,0)$ such that 
\[\xi=\x_0+(\text{lower bi-filtration level terms})\ .\]  
We complete $\xi_0=\xi$ to a $\F[U, U^{-1}]$-basis $\{\xi_0, \xi_1, \dots, \xi_m \}$ of $\CFKinfty(Y, K, \s)$ by simply taking $\xi_i=\x_i$ for $i=1, \dots, m$. Obviously:
\[\CFKinfty(Y, K, \s)= \F[U,U^{-1}] \cdot \xi \oplus \text{Span}_{\F[U, U^{-1}]}\langle  \xi_1, \dots, \xi_m \rangle \] 
as bi-filtered module, but not necessarily as chain complex (it can happen that $\xi_0=\xi$ is a component of $\partial\xi_i$ for some basis element $\xi_i$). This issue can be overcome as follows. Suppose that $\xi_0$ appears in the differential of a basis element  $\xi_i$. Since $[\xi_0]$ is non zero in $H_*(\CFKinfty(Y, K, \s)\{A\geq 0, j \geq 0\})$ we can write 
\[\partial \xi_i=\xi_0+ U^k\xi_j+\dots\ ,\]
for some $\F$-generator $U^k\xi_j$ with bi-filtration level $\geq(0,0)$. Then we substitute the basis element $\xi_j$ with $\xi_j+ U^{-k}\xi_0$. After repeating this move a few times, we can make sure that $\xi_0$ does not appear as a component of the differential of any other basis element. This leads to a  basis $\{\xi_0',  \dots , \xi'_m\}$ of $\CFKinfty(Y, K, \s)$ such that 
\begin{itemize}
\item $\CFKinfty(Y, K, \s)= \F [U, U^{-1}] \cdot \xi'_0 \oplus \text{Span}_{\F[U, U^{-1}]}\langle \xi'_1, \dots , \xi'_m \rangle$ as bi-filtered module 
\item $\xi'_0=\xi_0=\xi$, and hence $\partial \xi'_0=0$
\item $\partial \xi'_i\in \text{Span}_{\F[U, U^{-1}]}\langle \xi'_1, \dots , \xi'_m \rangle$ for $i=1, \dots, m$ .
\end{itemize}
The desired splitting is given by $\CFKinfty(Y, K, \s)= \F [U, U^{-1}] \cdot \xi \oplus \mathcal{A}$, where $\mathcal{A}=\text{Span}_{\F[U, U^{-1}]}\langle \xi'_1, \dots , \xi'_m \rangle$ 
\end{proof}

The above result can be summarized as follows:

\begin{lem}\label{main1} 
The map $\CFKinf \colon \hspace{0.1cm} \mathcal{C}_\Q \to \mathcalCFK$
  defines a group homomorphism.
\end{lem}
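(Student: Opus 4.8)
The plan is to verify the two defining properties of a group homomorphism: that $\CFKinf$ descends to a well-defined map on the rational concordance group $\mathcal{C}_\Q$, and that it intertwines the two group operations. The first point is precisely the content of Theorem~\ref{invariance}: if $(Y_0,K_0,\s_0)$ and $(Y_1,K_1,\s_1)$ represent the same class in $\mathcal{C}_\Q$, i.e.\ they are $\spinc$ rationally concordant, then $\CFKinf(Y_0,K_0,\s_0)$ and $\CFKinf(Y_1,K_1,\s_1)$ are stably equivalent, hence represent the same element of $\mathcalCFK/_\sim$. So the assignment $[(Y,K,\s)] \mapsto [\CFKinf(Y,K,\s)]$ is well defined at the level of quotient sets.

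For the homomorphism property, I would invoke the K\"unneth-type formula for knot Floer homology under connected sum, quoted from \cite{OS7} in the proof of Theorem~\ref{invariance}: for null-homologous knots one has a graded, bi-filtered chain homotopy equivalence
\[
\CFKinf(Y_0 \# Y_1, K_0 \# K_1, \s_0 \# \s_1) \simeq \CFKinf(Y_0, K_0, \s_0) \otimes_{\Laur} \CFKinf(Y_1, K_1, \s_1).
\]
Since the group operation on $\mathcal{C}_\Q$ is connected sum and the operation on $\mathcalCFK/_\sim$ is tensor product, this equivalence says exactly that $\CFKinf\big([(Y_0,K_0,\s_0)] \# [(Y_1,K_1,\s_1)]\big) = \CFKinf[(Y_0,K_0,\s_0)] \otimes \CFKinf[(Y_1,K_1,\s_1)]$ in $\mathcalCFK/_\sim$. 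One should also observe that the identity of $\mathcal{C}_\Q$, represented by rationally slice knots (in particular by $(S^3,U,\mathfrak{u})$), maps to the identity of $\mathcalCFK/_\sim$: by Lemma~\ref{rationalknots} any rationally slice knot has $\CFKinf$ stably equivalent to $\CFKinf(S^3,U,\mathfrak{u}) \simeq \F[U,U^{-1}]$ with trivial differential and trivial filtration, which is the identity element of $\mathcalCFK$.

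Putting these together: $\CFKinf$ is a well-defined set map $\mathcal{C}_\Q \to \mathcalCFK/_\sim$ by Theorem~\ref{invariance}, it carries $\#$ to $\otimes$ by the K\"unneth formula, and it carries the zero class to the zero class by Lemma~\ref{rationalknots}; hence it is a group homomorphism. I do not anticipate a serious obstacle here, as all the analytic input has already been established; the only point requiring a little care is checking that the K\"unneth equivalence respects the $\Z$-grading and the $(\Z\oplus\Z)$-filtration and hence passes to the quotient $\mathcalCFK/_\sim$ (so that stable-equivalence classes multiply), but this is immediate from the form of the equivalence used repeatedly in the proof of Theorem~\ref{invariance}. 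One might also note in passing that the inverse is respected, $-[(Y,K,\s)] = [(-Y,-K,\overline{\s})]$ mapping to the dual complex $\mathrm{Hom}(\CFKinf(Y,K,\s),\F[U,U^{-1}])$, though this follows automatically once the map is known to be a homomorphism and $\mathcalCFK/_\sim$ is a group.
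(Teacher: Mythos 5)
Your proposal is correct and follows essentially the same route as the paper: well-definedness of the induced map on $\mathcal{C}_\Q$ comes from Theorem~\ref{invariance}, and the homomorphism property comes from the K\"unneth formula of Ozsv\'ath--Szab\'o identifying $\CFKinf$ of a connected sum with the tensor product of the complexes. Your additional remarks on the identity element and inverses are consistent with the paper's setup and do not change the argument.
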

\begin{proof}
$\CFKinfty$ associates to a null-homologous knot $K$ in a $\spinc$
  $\Q HS^3$ $(Y,\s)$ a knot type complex $\CFKinfty(Y,
  K, \s)$ with $d(\CFKinfty(Y,K, \s))= d(Y, \s)$, well-defined up to
  $\Z \oplus \Z$-filtered chain homotopy equivalence. As a
  consequence of Theorem \ref{invariance}, the map  $\CFKinfty$ descends to a
  map $\mathcal{C}_\Q \to \mathcalCFK$. This is a group
  homomorphism in view of \cite[Proposition 4]{OS7}.
\end{proof}

We conclude this section by recalling yet another (but, as it turns out to
be, equivalent) equivalence relation among chain complexes: local 
equivalence. Although 
stable equivalence turns out to be very convenient in defining invariants
(as it will be clear from our later constructions), local equivalence
is phrased more naturally, since it takes maps between the actual
chain complexes into account, and these maps are naturally induced by 
cobordisms and concordances.

\begin{defi}
Suppose that $\mathcal{K}_i$ for $i=1,2$ are knot type chain complexes
over $\F[U, U^{-1}]$; the we say that $\mathcal{K}_1$ and
$\mathcal{K}_2$ are \textbf{locally equivalent} if there are graded,
$\Z \oplus \Z$-filtered chain maps $f\colon \mathcal{K}_1\to
\mathcal{K}_2$ and $g\colon \mathcal{K}_2\to \mathcal{K}_1$ inducing
isomorphisms on the homologies.
\end{defi}
\begin{thm}[\cite{zemke}] 
If the knots $(Y_1,K_1, \s _1)$ and $(Y_2,K_2, \s _2)$ are rationally
concordant (in the sense of Definition~\ref{ratconcordance}) then the
corresponding graded, $\Z\oplus \Z$-filtered chain complexes $\CFK ^{\infty} (Y_i,
K_i, \s _i)$ are locally equivalent. \qed
\end{thm}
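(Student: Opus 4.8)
\medskip
\noindent\textbf{Proof idea.}
The plan is to exhibit the two chain maps demanded by the definition of local equivalence directly from the rational concordance, using Zemke's functoriality for decorated link cobordisms in knot Floer homology. Let $(W,\mathfrak t)$ be a $\spinc$ rational homology cobordism from $(Y_1,\s_1)$ to $(Y_2,\s_2)$ and let $C\subset W$ be the embedded cylinder with $\partial C=K_2\cup -K_1$. Decorating $C$ with two parallel longitudinal arcs turns $(W,C,\mathfrak t)$ into a decorated $\spinc$ concordance, and Zemke's construction assigns to it a chain map $F_{W,C,\mathfrak t}\colon\CFKinf(Y_1,K_1,\s_1)\to\CFKinf(Y_2,K_2,\s_2)$ that is $\Laur$-equivariant, homogeneous, and filtered with respect to both the Alexander and the algebraic filtrations; the last two properties are exactly the statements that the cobordism map respects the two decorating arcs of the annulus. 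Reversing orientations produces in the same way a map $G\colon\CFKinf(Y_2,K_2,\s_2)\to\CFKinf(Y_1,K_1,\s_1)$ with the identical formal properties, induced by $(-W,-C,\overline{\mathfrak t})$.

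I would then check that $F_{W,C,\mathfrak t}$ and $G$ preserve the Maslov grading. The grading shift of such a map equals $\tfrac14\bigl(c_1(\mathfrak t)^2-2\chi(W)-3\sigma(W)\bigr)$ corrected by the combinatorial contribution of the decorated surface; since $W$ is a rational homology cobordism one has $\chi(W)=0$, $\sigma(W)=0$ and $c_1(\mathfrak t)^2=0$, while $C$ is an annulus carrying the standard two-arc decoration, whose contribution also vanishes. Hence both maps are grading preserving, as required. Next I would show $F_{W,C,\mathfrak t}$ and $G$ are quasi-isomorphisms. The homology of $\CFKinf(Y_i,K_i,\s_i)$ is $HF^\infty(Y_i,\s_i)\cong\Laur$, and by compatibility of Zemke's knot cobordism maps with the underlying three-manifold cobordism maps, the map induced on homology by $(W,C,\mathfrak t)$ is the $HF^\infty$ cobordism map $F^\infty_{W,\mathfrak t}\colon HF^\infty(Y_1,\s_1)\to HF^\infty(Y_2,\s_2)$. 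Since $W$ is a rational homology cobordism between rational homology spheres, $H^1(W;\Q)=0$, so there are no nontrivial cup products and, by the Ozsv\'ath--Szab\'o description of $HF^\infty$ cobordism maps, $F^\infty_{W,\mathfrak t}$ is an isomorphism of $\Laur$-modules; likewise for $-W$. Therefore $F_{W,C,\mathfrak t}$ and $G$ induce isomorphisms on homology, and the two knot complexes are locally equivalent.

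I expect the genuine difficulty to be organisational rather than conceptual: one must set up the decorated cobordism so that $\mathfrak t$ restricts to $\s_i$ on the two ends and to the correct relative $\spinc$ structure along $C$, invoke the filtered-ness of Zemke's maps in the $U$-localised flavour (rather than the hat or minus flavours), and verify that the grading and filtration shifts genuinely vanish. One can also obtain the statement more cheaply, at the cost of not producing the maps geometrically, from Theorem~\ref{invariance}: if $\CFKinf(Y_1,K_1,\s_1)\oplus\mathcal A_1\simeq\CFKinf(Y_2,K_2,\s_2)\oplus\mathcal A_2$ with the $\mathcal A_i$ acyclic, then the composite $\CFKinf(Y_1,K_1,\s_1)\hookrightarrow\CFKinf(Y_1,K_1,\s_1)\oplus\mathcal A_1\xrightarrow{\ \simeq\ }\CFKinf(Y_2,K_2,\s_2)\oplus\mathcal A_2\twoheadrightarrow\CFKinf(Y_2,K_2,\s_2)$, and its mirror, are graded bi-filtered chain maps, each a quasi-isomorphism (inclusion of, and projection onto, a direct summand with acyclic complement are quasi-isomorphisms, and a graded bi-filtered homotopy equivalence is one), which is precisely local equivalence. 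Thus stable equivalence already implies local equivalence, and Zemke's theorem is the sharper, geometric incarnation of this fact.
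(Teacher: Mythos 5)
Your proposal is correct, but be aware that the paper itself contains no argument for this statement: it is quoted from Zemke's functoriality package for decorated link cobordisms, and the square at the end records that the proof is external. Your first route is essentially a reconstruction of that external argument — decorate the concordance annulus with two parallel dividing arcs, check that the induced map is graded and bi-filtered with zero shift (for a rational homology cobordism $\chi(W)=\sigma(W)=c_1(\mathfrak{t})^2=0$, and the extra surface/dividing-set term in Zemke's grading formula, which you rightly flag is not just the classical closed-surface shift, vanishes for a two-arc annulus), and use that $\spinc$ rational homology cobordisms induce isomorphisms on $HF^\infty$ so that the map is a quasi-isomorphism; the only convention point left to nail down is that the turned-around cobordism carries the $\spinc$ structure restricting correctly to $\s_2$ and $\s_1$. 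Your second, cheaper route is also sound and is not circular: Theorem~\ref{invariance} is proved via Lemma~\ref{rationalknots} with no appeal to Zemke, and the passage from stable to local equivalence (include into the stabilized complex, apply the filtered homotopy equivalence, project off the acyclic summand) is exactly the easy direction of the proposition the paper states immediately after this theorem. The two routes buy different things: the geometric one produces local maps induced by the concordance itself, which is what one wants for naturality and involutive refinements, while the algebraic one only yields the existence of some local maps — which is all that is needed for the $\Upsilon^C$ invariance results in this paper.
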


\begin{prop}
The knot type chain complexes $\mathcal{K}_i$ for $(i=1,2)$ are stably
equivalent if and only if they are locally equivalent.
\end{prop}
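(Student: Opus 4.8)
The plan is to prove the two implications separately. The forward implication (stable $\Rightarrow$ local) is elementary: if $\mathcal{K}_1\oplus\mathcal{A}_1\simeq\mathcal{K}_2\oplus\mathcal{A}_2$ with the $\mathcal{A}_i$ acyclic, fix mutually inverse graded bi-filtered chain homotopy equivalences $\Phi$ and $\Psi$ between $\mathcal{K}_1\oplus\mathcal{A}_1$ and $\mathcal{K}_2\oplus\mathcal{A}_2$. The inclusions $\iota_i\colon\mathcal{K}_i\hookrightarrow\mathcal{K}_i\oplus\mathcal{A}_i$ and projections $\pi_i\colon\mathcal{K}_i\oplus\mathcal{A}_i\twoheadrightarrow\mathcal{K}_i$ are graded bi-filtered chain maps, and since $H_*(\mathcal{A}_i)=0$ they induce isomorphisms on homology; hence $f:=\pi_2\circ\Phi\circ\iota_1$ and $g:=\pi_1\circ\Psi\circ\iota_2$ exhibit $\mathcal{K}_1$ and $\mathcal{K}_2$ as locally equivalent. (Note one cannot merely say $\mathcal{K}_i\simeq\mathcal{K}_i\oplus\mathcal{A}_i$, as an acyclic bi-filtered complex need not be filtered-contractible; it is precisely the freedom to use the non-invertible maps $\iota_i,\pi_i$ that makes local equivalence the correct notion here.)

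For the reverse implication (local $\Rightarrow$ stable) I would first reduce to the case $\mathcal{K}_2=\F[U,U^{-1}]$, meaning the unknot complex $\CFKinf (S^3,U,\mathfrak{u})$ with trivial differential and trivial filtration. Given a local equivalence $f\colon\mathcal{K}_1\to\mathcal{K}_2$, $g\colon\mathcal{K}_2\to\mathcal{K}_1$, tensoring with $\mathrm{id}_{\mathcal{K}_2^{*}}$, where $\mathcal{K}_2^{*}=\mathrm{Hom}(\mathcal{K}_2,\F[U,U^{-1}])$, produces graded bi-filtered chain maps between $\mathcal{K}_1\otimes\mathcal{K}_2^{*}$ and $\mathcal{K}_2\otimes\mathcal{K}_2^{*}$ which are still homology isomorphisms, since the K\"unneth formula over the PID $\F[U,U^{-1}]$ carries no Tor terms (as $H_*(\mathcal{K}_2^{*})=\F[U,U^{-1}]$ is free). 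Because $[\mathcal{K}_2^{*}]$ is the inverse of $[\mathcal{K}_2]$ in $\mathcalCFK/_\sim$, the complex $\mathcal{K}_2\otimes\mathcal{K}_2^{*}$ is stably equivalent to $\F[U,U^{-1}]$, hence --- by the forward implication just proved --- locally equivalent to it, so by transitivity $\mathcal{K}_1\otimes\mathcal{K}_2^{*}$ is locally equivalent to $\F[U,U^{-1}]$. If we establish that every knot type complex $\mathcal{C}$ locally equivalent to $\F[U,U^{-1}]$ satisfies $\mathcal{C}\simeq\F[U,U^{-1}]\oplus\mathcal{A}$ for some acyclic $\mathcal{A}$, i.e. $\mathcal{C}\sim\F[U,U^{-1}]$, then applying this to $\mathcal{C}=\mathcal{K}_1\otimes\mathcal{K}_2^{*}$ gives $[\mathcal{K}_1][\mathcal{K}_2]^{-1}=[\F[U,U^{-1}]]$ in $\mathcalCFK/_\sim$, whence $\mathcal{K}_1\sim\mathcal{K}_2$.

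To finish I would handle $\mathcal{C}$ locally equivalent to $\F[U,U^{-1}]$ by rerunning the argument from the proof of Lemma~\ref{rationalknots}. Let $\phi\colon\F[U,U^{-1}]\to\mathcal{C}$ and $\psi\colon\mathcal{C}\to\F[U,U^{-1}]$ be graded bi-filtered chain maps inducing homology isomorphisms, and set $z=\phi(1)$. Then $z$ is a cycle of Maslov grading $0$ whose class generates $H_*(\mathcal{C})\cong\F[U,U^{-1}]$, and since $\phi$ is filtration non-increasing and $1$ sits at bi-filtration level $(0,0)$ we have $A(z)\leq 0$ and $j(z)\leq 0$. On the other hand $\psi(z)$ is a Maslov-grading-$0$ element of $\F[U,U^{-1}]$, hence a multiple of $1$, and it is nonzero because $\psi$ is a homology isomorphism and $[z]\neq 0$; so $\psi(z)=1$. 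If $A(z)\leq -1$, then $z\in\mathcal{C}\{A\leq -1\}$ would force $1=\psi(z)\in\F[U,U^{-1}]\{A\leq -1\}$, which is absurd; hence $A(z)=0$, and symmetrically $j(z)=0$. Thus $z$ is a cycle at bi-filtration level $(0,0)$ representing a generator of homology, and exactly as in the proof of Lemma~\ref{rationalknots} one extends $z$ to a filtered basis, performs a filtered change of basis so that $z$ does not occur in the differential of the remaining basis elements, and obtains a splitting $\mathcal{C}\simeq\F[U,U^{-1}]\oplus\mathcal{A}$ in which the first summand is the unknot complex $\CFKinf (S^3,U,\mathfrak{u})$ generated by $z$ and $\mathcal{A}$ is acyclic (its homology is a torsion-free direct summand of $\F[U,U^{-1}]$ of rank zero, hence $0$). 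This is the required splitting.

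The main obstacle is the reverse implication, and more precisely the fact that a single bi-filtered chain map inducing a homology isomorphism is not enough: one genuinely needs maps in both directions, and the tensor-dual reduction is what lets us cash this in as a genuine cycle of bi-filtration level $(0,0)$ inside $\mathcal{K}_1\otimes\mathcal{K}_2^{*}$ --- the abstract analogue of the duality step $V_K(0,\overline{\s})=0$ used in the proof of Lemma~\ref{rationalknots}. A more hands-on route would be to observe that $\mathcal{K}_2\simeq\mathrm{Cyl}(f)$ and to try to split $\mathrm{Cyl}(f)$ as $\mathcal{K}_1\oplus\mathrm{Cone}(f)$ with $\mathrm{Cone}(f)$ acyclic; but realizing such a splitting by a filtered change of basis requires $\mathrm{Cone}(f)$ to admit a filtered contracting homotopy, which an acyclic bi-filtered complex need not have --- this being exactly the reason stable equivalence is coarser than bi-filtered chain homotopy equivalence --- so that approach stalls without essentially the same input.
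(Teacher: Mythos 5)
Your argument is correct and follows the same route as the paper: the forward direction via inclusion/projection through the homotopy equivalence (you write down both maps $f$ and $g$; the paper only records one, the other being symmetric), and the reverse direction by tensoring with the dual $\mathcal{K}_2^{*}$ to reduce to the case where one complex is $\F[U,U^{-1}]$. The one genuine difference is how that base case is handled: the paper simply quotes \cite[Lemma~3.3]{involutive2}, whereas you reprove it, using the pair of local-equivalence maps $\phi,\psi$ to produce a cycle $z=\phi(1)$ generating homology whose bi-filtration level is pinned to exactly $(0,0)$ (the filtered map $\phi$ gives $A(z),j(z)\le 0$, and $\psi(z)=1$ gives the reverse inequalities), and then splitting off $\F[U,U^{-1}]\cdot z$ by the filtered change of basis from the proof of Lemma~\ref{rationalknots}. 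This is a faithful abstract analogue of the duality step (the vanishing of $V_K(0,\s)$ and $V_K(0,\overline{\s})$) used there, and it buys self-containedness at the cost of some length, while the paper's citation buys brevity but leaves the key content external. Your final step inherits exactly the same (standard, not spelled out) filtered-basis-completion detail as Lemma~\ref{rationalknots} itself, so no new gap is introduced; and your closing observation about why a mapping-cone splitting would stall --- acyclic bi-filtered complexes need not be filtered-contractible --- is a correct diagnosis of why the statement is phrased in terms of stable rather than filtered homotopy equivalence.
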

\begin{proof}
Stable equivalence clearly implies local equivalence. Indeed, if
$f\colon \mathcal{K}_1\oplus {\mathcal {A}}_1\to \mathcal{K}_2\oplus
{\mathcal {A}}_2$ is a chain homotopy equivalence (with ${\mathcal
  {A}}_i$ acyclic complexes), then the restriction of $f$ to
$\mathcal{K}_1$, composed with the projection $pr\colon \mathcal{K}_2\oplus
{\mathcal {A}}_2\to \mathcal{K}_2$ provided a chain map inducing isomorphism on
homology.

The converse is the content of \cite[Lemma~3.3]{involutive2} when
$\mathcal{K}_2=\F [U,U^{-1}]$. The general case of this converse
direction then follows by applying \cite[Lemma~3.3]{involutive2} to
$\mathcal{K}_2^*\oplus \mathcal{K}_1$ and $\F [U,U^{-1}]$ (where
${\mathcal {K}}_2^*$ denotes the dual of ${\mathcal {K}}_2$).
\end{proof}

\section{Knot Floer invariants from cyclic branched covers}
\label{uno}
In \cite{alfieri1} the first author described a general construction
producing maps $\mathcalCFK/_\sim \to \R$ that can be used to generate
rational concordance invariants $\mathcal{C}_\Q \to \R$ by
precomposing with $\CFKinfty$.  Recall that a region of the Euclidean
plane $C \subset \R^2$ is  \textbf{south-west} if it is
closed, non-empty, not equal to $\R ^2$ and $$(\overline{x},
\overline{y}) \in C \Rightarrow \{ (x,y) \ | \ x\leq \overline{x}, y
\leq \overline{y}\} \subseteq C.$$ Given a knot type complex $\K_*$,
denote by $\K_*(C)$ the $\F [U]$-submodule of $\K_*$ spanned by the
generators with $(j,A)$-invariants lying in $C$. For a south-west
region $C$ of the plane and a knot type complex $\K_*$ set
\[ \Upsilon^C(\K_*)= \inf \left\{ t \ | \ \K_*(C_t) \hookrightarrow \K_* \text{ induces a surjective map on }H_{d(\K_*)} \right\} ,\]   
where $C_t= \{(x,y)\ | \ (x-t, y-t) \in C\}$ denotes the translate of
$C$ with the vector $v_t=(t,t)$. Here we are using the Maslov grading
as homological grading, so $H_q(\K_*)= \Z_2$ for $q\in d(\K_*) + 2
\Z$, and zero otherwise.  Since the elements with Maslov grading equal to $d(Y, \s )$
 form a finite dimensional subspace, it is easy to see that
for $t\ll 0$ the map $\K_*(C_t) \hookrightarrow \K_*$ induces the
zero-map on $H_{d(\K_*)}$, while for $t\gg 0$ the induced map is
surjective.  The same finiteness also implies that the infimum
appearing in the definition above is indeed a minimum, hence in what follows we will write $\min$ instead of $\inf$. The next result was pointed out in \cite{alfieri1}.

\begin{prop} \label{main2} Let  $C$ be a south-west 
region. If $\K_*$ and $\K'_*$ are two stably equivalent knot type
complexes then $\Upsilon^C(\K_*)=\Upsilon^C(\K'_*)$. Consequently, for
every south-west region $C$ we get a map $\Upsilon^C:
\mathcalCFK/_\sim \to \R$. \qed
\end{prop}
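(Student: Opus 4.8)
The plan is to isolate two elementary properties of the functional $\Upsilon^C$ and establish them separately: invariance under graded bi-filtered chain homotopy equivalence, and invariance under adding a bi-filtered acyclic direct summand. Granting both, the proposition follows at once: if $\K_*\sim\K'_*$ then by definition $\K_*\oplus\A_1\simeq\K'_*\oplus\A_2$ for acyclic complexes $\A_1,\A_2$, whence $\Upsilon^C(\K_*)=\Upsilon^C(\K_*\oplus\A_1)=\Upsilon^C(\K'_*\oplus\A_2)=\Upsilon^C(\K'_*)$.

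For the acyclic-summand statement I would argue as follows. Adding an acyclic $\A$ leaves the correction term unchanged, $d(\K_*\oplus\A)=d(\K_*)=:d$, and leaves the relevant homology unchanged, $H_{d}(\K_*\oplus\A)=H_{d}(\K_*)$, since $H_*(\A)=0$. The submodule attached to a south-west region splits, $(\K_*\oplus\A)(C_t)=\K_*(C_t)\oplus\A(C_t)$, and the inclusion into $\K_*\oplus\A$ is the direct sum of the two inclusions; on $H_{d}$ it is therefore the direct sum of $H_{d}(\K_*(C_t))\to H_{d}(\K_*)$ with a map into $H_{d}(\A)=0$. Hence this inclusion is surjective on $H_{d}$ precisely when $\K_*(C_t)\hookrightarrow\K_*$ is, so the two sets of admissible parameters $t$ — and thus the two minima defining $\Upsilon^C$ — coincide.

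The chain-homotopy-invariance step contains the only point that is not pure formalism, and I expect it to be the main obstacle. Let $f\colon\K_*\to\K'_*$ and $g\colon\K'_*\to\K_*$ be graded bi-filtered chain maps with $gf$ and $fg$ homotopic to the identity through bi-filtered homotopies. Since $f$ is graded and a homotopy equivalence it induces an isomorphism of graded homologies, forcing $d(\K_*)=d(\K'_*)=:d$. The delicate observation is that a bi-filtered map — which by definition only respects the two coordinate filtrations, equivalently carries a generator sitting at lattice position $p\in\Z^2$ to a combination of generators at positions $\le p$ in the product partial order — automatically carries $\K_*(C_t)$ into $\K'_*(C_t)$, because every south-west region (hence every translate $C_t$) is a down-set for precisely that partial order. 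The same applies to $g$ and to the chosen bi-filtered homotopies, so $f$ and $g$ restrict to a chain homotopy equivalence $\K_*(C_t)\simeq\K'_*(C_t)$. Then the square whose top and bottom rows are the inclusions $\K_*(C_t)\hookrightarrow\K_*$ and $\K'_*(C_t)\hookrightarrow\K'_*$ and whose vertical maps are the restrictions of $f$ commutes strictly; passing to $H_{d}$ makes both vertical maps isomorphisms, so the top arrow is surjective on $H_{d}$ if and only if the bottom one is. Taking the minimum over $t$ then gives $\Upsilon^C(\K_*)=\Upsilon^C(\K'_*)$. Beyond the down-set observation, the whole argument reduces to bookkeeping with these commuting squares, with no analytic or combinatorial difficulty.
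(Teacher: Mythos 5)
Your proof is correct: the down-set observation (a graded, bi-filtered map sends the span of generators in $C_t$ into the span of generators in $C_t$, since every translate of a south-west region is a down-set for the product order) together with the acyclic-summand reduction is exactly what is needed, and the commuting-square bookkeeping is sound. Note that the paper itself gives no proof of Proposition~\ref{main2}, deferring to \cite{alfieri1}; your argument is essentially the standard one carried out there, so there is nothing to flag beyond the (correctly handled) point that the homotopies, being bi-filtered, also restrict to the truncated complexes.
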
 

In Figure~\ref{swregions} some notable families of south-west regions
are shown. Thanks to Proposition \ref{main2} each of these families
can be used to produce a one-parameter family of stable equivalence
invariants.

\begin{figure}
\includegraphics[width=6cm]{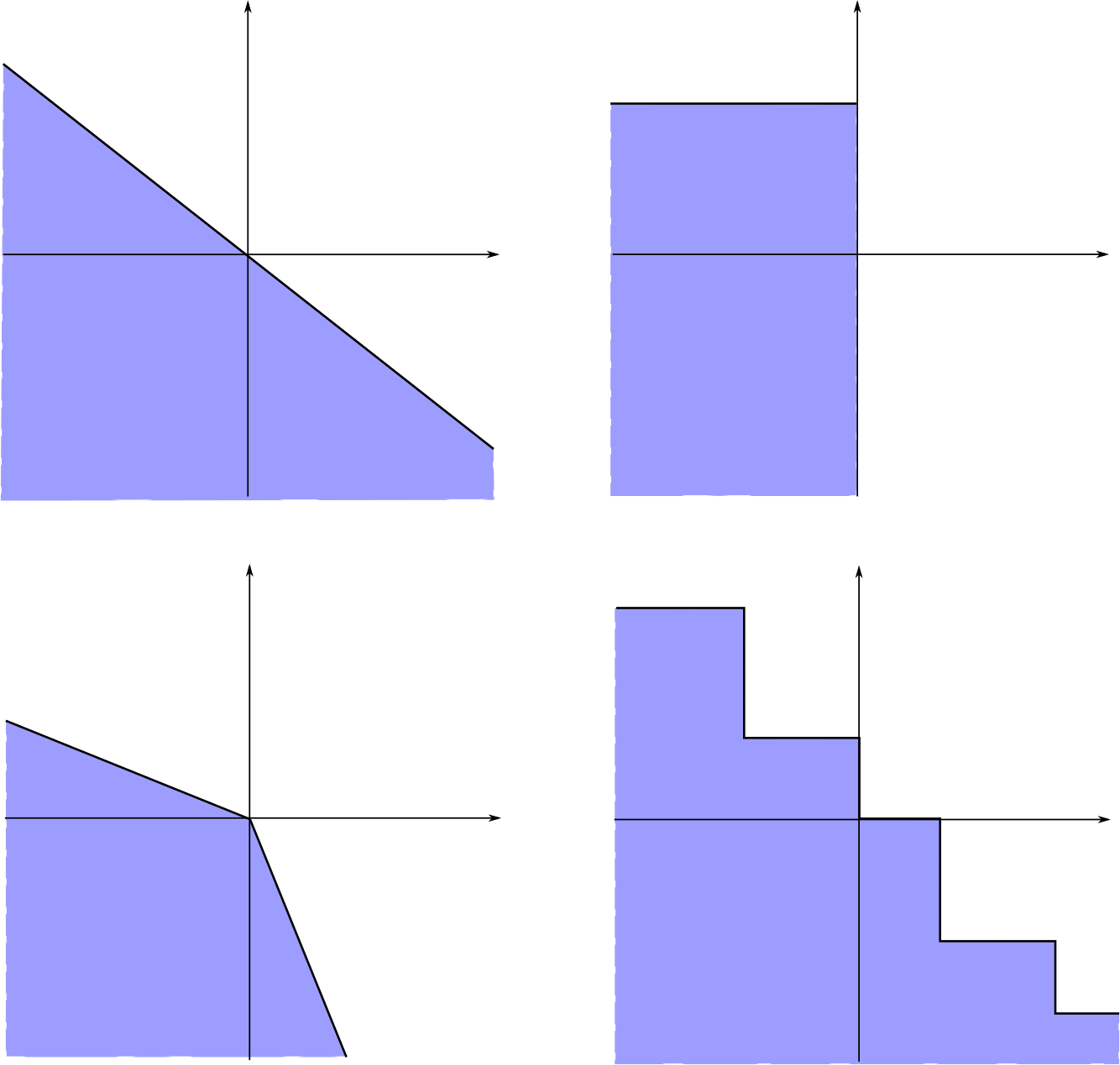}
\caption{Examples of normalized south-west domains.}
\label{swregions}
\end{figure}

In what follows, we will assume that the south-west region $C$ is
\textit{normalized}, that is
it contains the origin $(0,0) \in \R^2$ on its
boundary $\partial C$. This condition ensures that
$\Upsilon^C(\CFKinfty(S^3, U, \mathfrak{u}))=0$.

\subsection{Some concordance invariants of classical knots} \label{sec:concinvariantsclassical}
We can now use the results of the previous sections to define some
concordance invariants of knots in $S^3$. Given $K \subset S^3$ we can
form its $m$-fold cyclic branched cover $\Sigma^m(K)$. This is a
$3$-manifold equipped with an order $m$ self-diffeomorphism $\tau$.
The fixed locus of the $\Z/m \Z$-action defined by $\tau$ describes a
knot $\widetilde{K} \subset \Sigma^m(K)$. The following result is part of
knot theory folklore (see \emph{e.g.}~\cite[Section~3]{livingston2002seifert}).

\begin{lem}\label{folklore} If $m$ is a prime power, then $\Sigma^m(K)$ is a rational homology sphere.
\end{lem}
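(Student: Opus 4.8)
The plan is to reduce the statement to the classical fact that $H_1(\Sigma^m(K);\mathbb{Z})$ is finite, and then to deduce that finiteness from the Alexander polynomial of $K$ together with the hypothesis that $m$ is a prime power. Since $\Sigma^m(K)$ is by construction a closed, connected, oriented $3$-manifold, once we know $b_1(\Sigma^m(K)) = 0$, the universal coefficient theorem and Poincar\'e duality give $H_2(\Sigma^m(K);\mathbb{Q}) \cong H^1(\Sigma^m(K);\mathbb{Q}) \cong \mathrm{Hom}(H_1(\Sigma^m(K);\mathbb{Q}),\mathbb{Q}) = 0$, so $\Sigma^m(K)$ has the rational homology of $S^3$.

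First I would pass to the knot exterior $X = S^3 \setminus \nu(K)$ and its $m$-fold cyclic cover $X_m \to X$, associated to the reduction mod $m$ of the abelianization $\pi_1(X) \twoheadrightarrow \mathbb{Z}$. By definition $\Sigma^m(K)$ is obtained from $X_m$ by gluing back a single solid torus, so a Mayer--Vietoris argument for the decomposition $\Sigma^m(K) = X_m \cup_{T^2}(S^1\times D^2)$ yields a surjection $H_1(X_m;\mathbb{Q}) \twoheadrightarrow H_1(\Sigma^m(K);\mathbb{Q})$, and moreover the core of the filling torus kills the free $\mathbb{Q}$-summand of $H_1(X_m;\mathbb{Q})$ carried by a lift of the meridian. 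Milnor's exact sequence relating $H_*(X_m)$ to the rational Alexander module $\mathcal{A} = H_1(\widetilde{X}_\infty;\mathbb{Q})$ --- a torsion $\mathbb{Q}[t,t^{-1}]$-module whose order is the Alexander polynomial $\Delta_K(t)$ --- then identifies what survives into the branched cover with a quotient of $\mathcal{A}/(t^m - 1)\mathcal{A}$, a finite-dimensional $\mathbb{Q}$-vector space which vanishes precisely when $\Delta_K(t)$ and $t^m - 1$ are coprime in $\mathbb{Q}[t]$. This is the well-known criterion: $H_1(\Sigma^m(K);\mathbb{Q}) = 0$ if and only if $\Delta_K(\zeta) \neq 0$ for every $m$-th root of unity $\zeta \neq 1$ (indeed $|H_1(\Sigma^m(K);\mathbb{Z})| = \bigl|\prod_{j=1}^{m-1}\Delta_K(e^{2\pi i j/m})\bigr|$ whenever the product is nonzero). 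So everything comes down to showing that no nontrivial $m$-th root of unity is a root of $\Delta_K$.

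This is the step where the prime-power hypothesis enters, and it is short. Suppose, for contradiction, that $\Delta_K(\zeta) = 0$ for a primitive $d$-th root of unity $\zeta$ with $d \mid m = p^k$ and $d > 1$; then $d = p^i$ for some $1 \le i \le k$. Normalize $\Delta_K$ to a genuine polynomial in $\mathbb{Z}[t]$, symmetric, with $\Delta_K(1) = \pm 1$ (recall $\Delta_K(1) = \det(V - V^{T}) = \pm 1$ for any Seifert matrix $V$ of $K$). The cyclotomic polynomial $\Phi_{p^i}$ is the minimal polynomial of $\zeta$ over $\mathbb{Q}$ and is monic with integer coefficients, so $\Phi_{p^i} \mid \Delta_K$ in $\mathbb{Z}[t]$. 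Evaluating at $t = 1$ and using $\Phi_{p^i}(t) = 1 + t^{p^{i-1}} + t^{2p^{i-1}} + \dots + t^{(p-1)p^{i-1}}$, hence $\Phi_{p^i}(1) = p$, we obtain $p \mid \Delta_K(1) = \pm 1$ --- a contradiction. Therefore $\Delta_K$ has no nontrivial $m$-th root of unity as a root, $H_1(\Sigma^m(K);\mathbb{Z})$ is finite, and $\Sigma^m(K)$ is a rational homology sphere.

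The essential inputs are thus $\Delta_K(1) = \pm 1$ (classical) and $\Phi_{p^i}(1) = p$ (an elementary computation), and I expect the only point requiring real care --- the ``main obstacle'' in an otherwise routine argument --- to be the homological bookkeeping of the second paragraph: making precise the passage from $H_1$ of the branched cover to a quotient of the Alexander module, that is, keeping track of the free $\mathbb{Q}$-summand of $H_1(X_m;\mathbb{Q})$ that disappears under the Dehn filling and verifying the surjectivity $H_1(X_m) \twoheadrightarrow H_1(\Sigma^m(K))$. For $m$ prime one could alternatively invoke $\mathbb{Z}/m$-Smith theory for the branched cyclic action, but the Alexander-polynomial route treats all prime powers uniformly and makes the role of the hypothesis transparent.
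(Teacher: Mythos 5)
Your proof is correct and follows essentially the same route as the paper: both arguments reduce to the classical fact that $|H_1(\Sigma^m(K);\Z)|=\prod_{i}|\Delta_K(\omega^i)|$ when this product is nonzero (the paper phrases it via the Alexander polynomial of the lifted knot $\widetilde{K}$, you via the cyclic cover of the knot exterior, but this is only a difference in bookkeeping), and then conclude with the identical cyclotomic argument that a primitive $p^i$-th root of unity cannot be a root of $\Delta_K$ since $\Phi_{p^i}(1)=p$ would have to divide $\Delta_K(1)=\pm1$.
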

\begin{proof}
$\widetilde{K} \subset \Sigma^m(K)$ is a null-homologous knot since it
  bounds any lift $\widetilde{F} \subset \Sigma^m(K)$ of a Seifert
  surface $F \subset S^3$ of $K$. Using a Seifert matrix $
  \theta_{\widetilde{F}}$ of $\widetilde{F}$ one can define the
  Alexander polynomial of $\widetilde{K}$ via the formula $
  \Delta_{\widetilde{K}}(x)= \det ( x \theta_{\widetilde{F}}- x^{-1}
  \theta^t_{\widetilde{F}})$.

The first homology $H_1(\Sigma^m(K); \Z)$ is of order
$|\Delta_{\widetilde{K}}(-1)|$ if the latter is non-zero, and infinite
otherwise. The Alexander polynomial of $\widetilde{K}$ can be computed
from the usual Alexander polynomial of $K$ by the formula
\[ \Delta_{\widetilde{K}}(x)= \prod_{i=0}^{m-1} \Delta_K(\omega^{i} x^{1/m}) \]
where $\omega$ denotes a primitive $m^{th}$ root of unity.  When
$m=p^k$ is a power of a prime,
$$|\Delta_{\widetilde{K}}(-1)|=\prod_{i=0}^{m-1}
|\Delta_K(\omega^{i})|\not=0 ,$$ since the Alexander polynomial of a knot
does not vanish at any $p^k$ root of unity: if it did then it would
be divisible by the cyclotomic polynomial $\phi_{p^r}(x)=\sum_{i=0}^{p-1}
x^{ip^{r-1}}$ implying that $\phi_{p^r}(1)=p$ divides 
$\Delta_K(1)=1$, a contradiction. 
\end{proof}

\begin{rem}
The Alexander polynomial $\Delta_{\widetilde{K}}(x)$
admits a refinement according to spin$^c$ structures, see
\cite{Turaev}.
\end{rem}

The $m$-fold cyclic branched cover $\Sigma^m(K)$ of a knot $K\subset
S^3$ admits a distinguished  spin structure $\s_0$ that can be used
to canonically identify $\Spinc(\Sigma^m(K))$ with its second
cohomology group $H^2(\Sigma^m(K); \Z) \simeq H_1(\Sigma^m(K); \Z)$ (\emph{cf}.~\cite{grigsby2008knot} and \cite{levine2008computing}).

\begin{lem}[Lemma 2.1~\cite{grigsby2008knot}]\label{labelling} 
Let $\Sigma^m(F)$ denote the $m$-fold cyclic branched cover of a
properly embedded surface $F \subset B^4$, with boundary a knot $K
\subset S^3$. Denote by $\widetilde{F} \subset \Sigma^m(F)$ the fixed
locus of the covering action of $\Sigma^m(F)$. Then there is a unique
spin structure $\mathfrak{t}_0$ on $\Sigma^m(F)$ characterized as
follows:
\begin{itemize}
\item if $m$ is odd, the restriction of $\mathfrak{t}_0$ to $\Sigma^m(F)- \nu \widetilde{F}$ is the pull-back $\widetilde{\mathfrak{t}}$ of the spin structure of $B^4- \nu F$ extending over $B^4$,   
\item if $m$ is even, the restriction of $\mathfrak{t}_0$ to $\Sigma^m(F)- \nu \widetilde{F}$ is $\widetilde{\mathfrak{t}}$ twisted by the element of $H^1(\Sigma^m(F)- \nu \widetilde{F}; \Z_2)$ supported on the linking circle of $\widetilde{F}$. 
\end{itemize}
\end{lem}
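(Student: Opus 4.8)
The plan is to reduce everything to an obstruction computation on the complement of the branch locus. Write $N=\nu\widetilde F$ for a tubular neighbourhood of $\widetilde F$ in $\Sigma^m(F)$ and set $\Sigma^m(F)^\circ=\Sigma^m(F)\setminus N$. Since the branched covering $\Sigma^m(F)\to B^4$ is unramified away from $\widetilde F$, the restricted map $\Sigma^m(F)^\circ\to B^4\setminus\nu F$ is an honest $m$--fold cyclic cover, classified by $\pi_1(B^4\setminus\nu F)\to H_1(B^4\setminus\nu F;\Z)=\Z\to\Z/m$; pulling back the unique spin structure of $B^4$ along it produces the spin structure $\widetilde{\mathfrak t}$ appearing in the statement. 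Uniqueness is then immediate: by excision and the Thom isomorphism $H^i(\Sigma^m(F),\Sigma^m(F)^\circ;\F)\cong H^{i-2}(\widetilde F;\F)$, which vanishes for $i\le 1$, the restriction $H^1(\Sigma^m(F);\F)\to H^1(\Sigma^m(F)^\circ;\F)$ is injective; since spin structures form a torsor over $H^1(-;\F)$, a spin structure on $\Sigma^m(F)$ is determined by its restriction to $\Sigma^m(F)^\circ$.

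Next I would pin down which spin structures on $\Sigma^m(F)^\circ$ extend over $\Sigma^m(F)$. A spin structure extends iff it extends over the disk bundle $N$ — and $N$ is spin, since $w_2(TN)$ lies in $H^2(\widetilde F;\F)=0$ because $\widetilde F$ has non--empty boundary — so, inspecting a single $D^2$--fibre and using that the disk carries a unique spin structure, extension occurs exactly when the spin structure restricts to the \emph{bounding} spin structure on a meridian circle $\mu$ of $\widetilde F$. The crux is therefore $\widetilde{\mathfrak t}|_\mu$. From the local model $z\mapsto z^m$ near $\widetilde F$, the covering $\Sigma^m(F)^\circ\to B^4\setminus\nu F$ sends $\mu$ onto a meridian $\mu_0$ of $F$ by a degree--$m$ map $S^1\to S^1$, while $\mathfrak t_{B^4}|_{\mu_0}$ is the bounding structure because $\mu_0$ bounds a normal disk in $\nu F\subset B^4$. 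Using the elementary rule that pulling back a spin structure on $S^1$ along a degree--$m$ cover $S^1\to S^1$ keeps the bounding one bounding when $m$ is odd and turns it non--bounding when $m$ is even, we conclude that $\widetilde{\mathfrak t}|_\mu$ is bounding for $m$ odd and non--bounding for $m$ even.

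The two cases follow at once. For $m$ odd, $\widetilde{\mathfrak t}$ extends, and by the injectivity above its extension is the unique spin structure $\mathfrak t_0$ restricting to $\widetilde{\mathfrak t}$. For $m$ even, $\widetilde{\mathfrak t}$ does not extend; one instead twists it by the class $\phi\in H^1(\Sigma^m(F)^\circ;\F)$ pairing non--trivially with $[\mu]$, namely the class ``supported on the linking circle'' of $\widetilde F$. Such a $\phi$ exists precisely when $[\mu]\ne 0$ in $H_1(\Sigma^m(F)^\circ;\F)$, equivalently when $\widetilde F$ is $\F$--null--homologous in $\Sigma^m(F)$; in the relevant setting ($F$ a disk and $m$ a prime power) this holds because $\Sigma^m(F)$ is then $\F$--acyclic by Smith theory, the branched cover of the $\F$--acyclic pair $(B^4,D^2)$ being $\F$--acyclic. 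Granting this, $\widetilde{\mathfrak t}+\phi$ restricts to the bounding structure on $\mu$, hence extends, and its unique extension is the claimed $\mathfrak t_0$.

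The step I expect to be the real obstacle is the even case: on one hand one must know a priori that $\Sigma^m(F)$ is spin at all, which — as above — is not automatic and rests on the $\F$--homological triviality of $\widetilde F$; on the other hand one must ensure that ``supported on the linking circle'' designates a well--defined class, since a priori $\phi$ is determined only modulo the image of $H^1(\Sigma^m(F);\F)$, so some care is needed to single out a canonical representative and to check that $\mathfrak t_0$ is independent of auxiliary choices. The $S^1$ pull--back rule, though elementary, is likewise worth verifying carefully — e.g.\ by describing a spin structure on $S^1$ as one of its two double covers and forming the fibre product with the $m$--fold cover $S^1\to S^1$ — since it is precisely where the parity of $m$ enters.
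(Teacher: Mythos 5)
This lemma is not proved in the paper at all: it is quoted verbatim from Grigsby--Ruberman--Strle (their Lemma~2.1), so there is no internal proof to compare with. Judged on its own, your outline follows the natural (and surely intended) route: pass to the complement $\Sigma^m(F)^\circ=\Sigma^m(F)-\nu\widetilde F$, pull back the restriction of the unique spin structure of $B^4$ along the honest $m$-fold cover, decide extendability across $\nu\widetilde F\cong\widetilde F\times D^2$ by whether the structure is bounding on a meridian $\mu$, let the parity of $m$ enter through the degree-$m$ map $\mu\to\mu_0$, and get uniqueness from the injectivity of $H^1(\Sigma^m(F);\Z_2)\to H^1(\Sigma^m(F)^\circ;\Z_2)$. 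The odd case is complete: the Thom-isomorphism injectivity and your circle pull-back rule are correct, and the framing caveats you flag are routine.

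The genuine gap is exactly where you point: the even case, and it matters for this paper. You justify the existence of a class $\phi$ with $\langle\phi,[\mu]\rangle=1$ only when $F$ is a slice disk and $m$ is a prime power, via Smith theory; but the lemma is stated for an arbitrary properly embedded $F$, and in this paper it is applied with $F$ a pushed-in Seifert surface of an arbitrary knot and $m=2^r$, where $\Sigma^m(F)$ is in general far from a rational homology ball, so your argument covers neither the stated generality nor the paper's main use (the definition of $\s_0$); you also leave the class ``supported on the linking circle'' undetermined. Both issues are resolved by an elementary covering-space argument, with no Smith theory: $H_1(B^4-\nu F;\Z)\cong\Z$, generated by the meridian $\mu_0$ (excision plus the Thom isomorphism, exactly as in your uniqueness step); let $\lambda\colon\pi_1(B^4-\nu F)\to\Z$ be the induced map. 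The cover is classified by $\lambda$ mod $m$, so $\pi_1(\Sigma^m(F)^\circ)=\lambda^{-1}(m\Z)$, and $\tfrac{1}{m}\lambda$ restricts to a homomorphism $\pi_1(\Sigma^m(F)^\circ)\to\Z$ sending the meridian class $[\mu]$ (which maps to $\mu_0^m$ downstairs) to $1$. Its mod $2$ reduction is a canonical class $\phi\in H^1(\Sigma^m(F)^\circ;\Z_2)$ --- ``linking number with $\widetilde F$ mod $2$'', i.e.\ the class supported on the linking circle --- with $\langle\phi,[\mu]\rangle=1$, for every connected orientable $F$ and every $m$. Substituting this for your Smith-theory step (and noting that bounding on one meridian suffices for extension over all of $\widetilde F\times D^2$ since $\widetilde F$ is connected), your argument does prove the lemma as stated.
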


We define the canonical spin structure $\s_0$ of the three-manifold
$\Sigma^m(K)$ as the restriction of the spin structure $\mathfrak{t}_0
\in \Spinc(\Sigma^m(F))$ of Lemma \ref{labelling} to $\Sigma^m(K)=
\partial( \Sigma^m(F))$, where $F \subset B^4$ denotes a pushed-in
Seifert surface of $K \subset S^3$.

\begin{lem}\label{concordancecovers} 
Let $\mathcal{C}_\Q^{\Spin}$ denote the subgroup of $\mathcal{C}_\Q$
spanned by the triples $(Y, K, \s)$ with $\s$ spin. Let $m=p^r$ be a
power of a prime. Then the map $\Sigma^m \colon  K \mapsto (\Sigma^m(K),
\widetilde{K}, \s_0)$ descends to a group homomorphism $\Sigma^m:
\mathcal{C} \to \mathcal{C}_\Q^\Spin$.
\end{lem}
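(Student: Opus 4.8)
The plan is to establish the three things a group homomorphism $\mathcal{C} \to \mathcal{C}_\Q^{\Spin}$ requires: that the assignment $K \mapsto (\Sigma^m(K), \widetilde{K}, \s_0)$ actually takes values in $\mathcal{C}_\Q^{\Spin}$, that it is insensitive to concordance in $S^3$, and that it converts connected sum into connected sum (with the unknot going to the trivial class). The first point is immediate from what has already been set up: since $m$ is a prime power, Lemma~\ref{folklore} says $\Sigma^m(K)$ is a $\Q$HS$^3$ and $\widetilde{K}$ is null-homologous, while Lemma~\ref{labelling} — applied to a pushed-in Seifert surface $F \subset B^4$ of $K$ — exhibits the distinguished class $\s_0 \in \Spinc(\Sigma^m(K))$ as the reduction of an honest spin structure; in particular $\overline{\s_0} = \s_0$, so $(\Sigma^m(K), \widetilde{K}, \s_0)$ is a legitimate element of $\mathcal{C}_\Q^{\Spin}$.

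For concordance invariance I would argue as follows. Given a smooth concordance $C \subset S^3 \times [0,1]$ with $\partial C = K_1 \cup -K_0$, form its $m$-fold cyclic branched cover $W = \Sigma^m(S^3 \times [0,1], C)$, whose fixed locus is a smoothly properly embedded cylinder $\widetilde{C} \subset W$; then $\partial W = \Sigma^m(K_1) \sqcup -\Sigma^m(K_0)$ and $\widetilde{C} \cap \partial W = \partial \widetilde{C} = \widetilde{K_1} \cup -\widetilde{K_0}$, exactly the cylinder demanded in Definition~\ref{ratconcordance}. It remains to equip $W$ with a spin structure restricting to $\s_0$ on the two ends, and to check that $W$ is a rational homology cobordism. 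For the spin structure I would run the construction of Lemma~\ref{labelling} verbatim (it only uses that the ambient $4$-manifold is spin, which $S^3 \times [0,1]$ is); to see that it restricts to $\s_0$ on each end, fix a pushed-in Seifert surface $F_0 \subset B^4$ of $K_0$ and form the slice surface $F_1 = F_0 \cup_{K_0} C$ for $K_1$ inside $B^4 \cup_{S^3} (S^3 \times [0,1]) \cong B^4$, so that $\Sigma^m(B^4, F_1) = \Sigma^m(B^4, F_0) \cup_{\Sigma^m(K_0)} W$; the characterizing property of the canonical spin structures (their prescribed restriction to the complement of the branch locus) is inherited under this cut-and-paste, so $\mathfrak{t}_0$ on $\Sigma^m(B^4,F_1)$ restricts to the canonical spin structures on the two pieces, hence to $\s_0$ on each of $\Sigma^m(K_0)$ and $\Sigma^m(K_1)$ by the very definition of $\s_0$. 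That $W$ is a rational homology cobordism then follows because, $m$ being a prime power, both $\Sigma^m(B^4, F_0)$ and $\Sigma^m(B^4, F_1)$ are rational homology $4$-balls (the surface-in-$B^4$ analogue of Lemma~\ref{folklore}; see \cite{grigsby2008knot, levine2008computing}), and a Mayer--Vietoris computation with $\Q$ coefficients — using that $\Sigma^m(K_0)$ is a $\Q$HS$^3$ — forces the inclusions of the two ends into $W$ to be $\Q$-homology isomorphisms. Thus $(\Sigma^m(K_0), \widetilde{K_0}, \s_0)$ and $(\Sigma^m(K_1), \widetilde{K_1}, \s_0)$ are $\spinc$ rationally concordant, so $\Sigma^m$ descends to $\mathcal{C}$.

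For additivity I would realise $K_0 \# K_1$ by the connected sum of $(S^3, K_0)$ and $(S^3, K_1)$ along a $3$-ball meeting each $K_i$ in a single unknotted arc; since the $m$-fold cyclic branched cover of $(B^3, \text{unknotted arc})$ is again $(B^3, \text{unknotted arc})$, the branched covers glue to a diffeomorphism $\Sigma^m(K_0 \# K_1) \cong \Sigma^m(K_0) \# \Sigma^m(K_1)$ carrying $\widetilde{K_0 \# K_1}$ to $\widetilde{K_0} \# \widetilde{K_1}$, and the canonical spin structures are additive under connected sum (a connected sum of spin $3$-manifolds carries a unique compatible spin structure), so this is an equality in $\mathcal{C}_\Q^{\Spin}$. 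Combined with $\Sigma^m(U) = (S^3, U, \mathfrak{u})$ being the identity of $\mathcal{C}_\Q$, this shows $\Sigma^m \colon \mathcal{C} \to \mathcal{C}_\Q^{\Spin}$ is a group homomorphism.

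The hard part will be the step inside concordance invariance asserting that $W$ — equivalently, that the cyclic branched covers of slice surfaces in $B^4$ — are rationally acyclic: this is precisely where the hypothesis that $m$ is a prime power is essential, through the non-vanishing of the Alexander polynomial at $p^r$-th roots of unity (equivalently, a Smith-theoretic transfer argument), in exactly the spirit of the proof of Lemma~\ref{folklore}. Everything else is bookkeeping with the canonical spin structure and with connected sums, and can in any case be extracted from \cite{grigsby2008knot}.
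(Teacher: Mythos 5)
Your overall strategy for concordance invariance differs from the paper's: the paper first notes $\Sigma^m$ respects connected sums and then reduces to showing that a \emph{slice} knot lifts to a rationally slice knot, taking the $m$-fold branched cover of the slice \emph{disk}, which is the classical Casson--Gordon fact that prime-power cyclic covers of $B^4$ branched over a disk are rational homology balls. You instead cover the concordance cylinder directly, which is a legitimate (and arguably more hands-on) route --- but the step where you justify that $W=\Sigma^m(S^3\times[0,1],C)$ is a rational homology cobordism contains a genuine error. You assert that $\Sigma^m(B^4,F_0)$ and $\Sigma^m(B^4,F_1)$ are rational homology $4$-balls, where $F_0$ is a pushed-in Seifert surface of $K_0$ and $F_1=F_0\cup_{K_0}C$. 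This is false: the rational-homology-ball statement holds for branched covers over \emph{disks}, not over positive-genus surfaces. For example, the double cover of $B^4$ branched over a pushed-in genus-one Seifert surface of the trefoil has intersection form the symmetrized Seifert form, so $b_2=2\neq 0$; in general $b_2$ of such a cover grows with the genus of the surface. Since $K_0$ is an arbitrary knot (you are proving concordance invariance, not sliceness), you cannot take $F_0$ to be a disk, and the Mayer--Vietoris argument built on this claim collapses.

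The conclusion you want is nevertheless true, but it needs the correct tool: the Casson--Gordon/Gilmer lemma that for $m=p^r$ an $m$-fold cyclic branched cover of a pair with vanishing $\Z_p$-homology again has vanishing $\Z_p$-homology. Applied to $(S^3\times[0,1],S^3\times\{0\})$ branched along $(C,K_0)$ it gives $H_*(W,\Sigma^m(K_0);\Z_p)=0$, hence $H_*(W,\partial_\pm W;\Q)=0$, i.e.\ $W$ is a rational homology cobordism; this is the ``Smith-theoretic transfer'' you allude to at the end, but it is not interchangeable with the rational Mayer--Vietoris computation you actually wrote, and deferring it as ``extractable from the literature'' leaves precisely the essential prime-power input unproved. (Your well-definedness and connected-sum arguments are fine and match the paper's; the spin-structure bookkeeping also implicitly uses that $\s_0$ does not depend on the choice of bounding surface in $B^4$, which deserves at least a remark when $m$ is even, since then $\Sigma^m(K)$ may admit several spin structures.) A cleaner fix is simply the paper's reduction: once additivity is in place, it suffices to treat a slice knot $K$, cover the slice disk $\Delta\subset B^4$ to get a rational homology ball containing the lifted disk, and note that the spin structure $\mathfrak{t}_0$ of Lemma~\ref{labelling} extends $\s_0$ over this ball.
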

\begin{proof}
Since the $m$-fold cyclic branched cover of the two-sphere branched on
two points is the two-sphere, we conclude that the map $\Sigma^m$ respects
connected sums; moreover, the spin structures on the summands glue to the spin structure on the connected sum.
Thus, it is enough to prove that if $K$ is a slice knot
then $(\Sigma^m(K), \widetilde{K}, \s_0)$ is rationally slice.

Suppose that $K \subset S^3$ bounds a smooth disk $\Delta \subset
B^4$. By taking the $m$-fold cyclic branched cover of $\Delta$ we get
a rational homology ball $\Sigma^m(\Delta)$. In $\Sigma^m(\Delta)$ the
knot $\widetilde{K} \subset \partial \Sigma^m(\Delta)$ bounds a smooth
disk $\widetilde{\Delta}$, namely the fixed locus of the covering
action of $\Sigma^m(F)$. Furthermore, the spin structure
$\mathfrak{t}_0$ of Lemma \ref{labelling} provides a spin extension of
$\s_0$ to $\Sigma^m(F)$.
\end{proof}

\begin{thm}\label{theorem1} Let $C \subset \R^2$ be a south-west region, and $m=p^r$ a power of a prime. Given a knot $K \subset S^3$ set $\Upsilon^C_m(K)= \Upsilon^C( \CFKinfty(\Sigma^m(K), \widetilde{K}, \s_0))$. Then $\Upsilon^C_m(K)$ is a knot concordance invariant. 
\end{thm}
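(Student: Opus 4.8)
The plan is to observe that $\Upsilon^C_m$ is, by construction, the composition of three maps each of which has already been shown to be well defined on the appropriate quotient, so that concordance invariance follows formally. Explicitly, I would write
\[
\Upsilon^C_m \;=\; \Upsilon^C \circ \CFKinf \circ \Sigma^m,
\]
and check that each factor descends to the relevant set of equivalence classes.

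First I would note that Lemma~\ref{folklore} guarantees, since $m = p^r$ is a prime power, that $\Sigma^m(K)$ is a rational homology sphere and that $\widetilde{K}$ is null-homologous in it; together with the canonical spin structure $\s_0$ defined after Lemma~\ref{labelling}, this shows that $(\Sigma^m(K), \widetilde{K}, \s_0)$ is a legitimate element of $\mathcal{C}_\Q$ (in fact of $\mathcal{C}_\Q^{\Spin}$). Then I would invoke Lemma~\ref{concordancecovers}: if $K$ and $K'$ are concordant knots in $S^3$, the branched cover of the concordance annulus, carrying the spin structure $\mathfrak{t}_0$, is a $\spinc$ rational homology cobordism exhibiting $(\Sigma^m(K), \widetilde{K}, \s_0)$ and $(\Sigma^m(K'), \widetilde{K'}, \s_0)$ as $\spinc$ rationally concordant; hence $\Sigma^m$ is a well-defined map $\mathcal{C} \to \mathcal{C}_\Q$.

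Next I would use Lemma~\ref{main1} (equivalently Theorem~\ref{invariance}): $\CFKinf$ descends to a map $\mathcal{C}_\Q \to \mathcalCFK/_\sim$, so $\CFKinf(\Sigma^m(K), \widetilde{K}, \s_0)$ depends only on the rational concordance class of the triple, and therefore only on the concordance class of $K$. Finally, by Proposition~\ref{main2}, $\Upsilon^C$ is a well-defined function $\mathcalCFK/_\sim \to \R$. Composing these three facts gives that $\Upsilon^C_m(K)$ is a concordance invariant.

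I do not expect a genuine obstacle here; the statement is essentially a packaging of Lemmas~\ref{folklore}, \ref{concordancecovers}, \ref{main1} and Proposition~\ref{main2}. The one point requiring attention is purely a matter of matching conventions: one must make sure that the spin structure $\s_0$ used to define $\Upsilon^C_m$ is exactly the restriction of the spin structure $\mathfrak{t}_0$ appearing in Lemma~\ref{concordancecovers}, so that the branched cobordism really does inherit a $\spinc$ structure restricting correctly to both ends — but this compatibility was built into the definition of $\s_0$ and verified in the proof of Lemma~\ref{concordancecovers}.
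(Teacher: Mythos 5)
Your argument is exactly the paper's: Theorem~\ref{theorem1} is proved there by combining Lemma~\ref{main1}, Lemma~\ref{concordancecovers} and Proposition~\ref{main2}, which is precisely your factorization $\Upsilon^C_m = \Upsilon^C \circ \CFKinf \circ \Sigma^m$, just spelled out in more detail (including the check via Lemma~\ref{folklore} and the spin structure compatibility). The proposal is correct and matches the paper's proof.
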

\begin{proof} The result follows by combining Lemmas~\ref{main1}, \ref{concordancecovers} and Proposition \ref{main2}.
\end{proof}

\begin{rem} Using south-west regions which are symmetric with respect to the $x=y$ axis, one can construct further invariants $\overline{\Upsilon}^C_m(K)$ and $\underline{\Upsilon}^C_m(K)$ by means of the involutive Heegaard Floer homology \cite{manoleacuendriks}. For a related definition of 
involutive upsilon invariants, see \cite{HL}. 
\end{rem}

For a given cohomology class $\xi \in
H^2(\Sigma^m(K); \Z) \simeq H_1(\Sigma^m(K); \Z)$ we can set
\[\Upsilon^C_\xi(K)= \Upsilon^C( \CFKinfty(\Sigma^m(K), \widetilde{K}, \s_0+\xi)) \ , \]
where $\s_0+\xi $ denotes the spin$^c$ structure we get from $\s _0$ by twisting it with $\xi$. 
The following result is a straightforward adaptation of~\cite[Theorem 1.1]{grigsby2008knot}, and provides a way to obtain some additional concordance information
 from the construction above.

\begin{thm}\label{theorem3} 
Let $K \subset S^3$ be a knot. 
Suppose that $m=p^r$ is a power of a prime and 
set 
\[{\det}_m(K)= \prod_{i=0}^{m-1} |\Delta_K(\omega^{i})| \]
where $\omega$ is a primitive $m$-th root of unity. 
If $K$ is a slice knot then there
exists a subgroup $G < H^2(\Sigma^m(K); \Z)$ of cardinality
$\sqrt{\det_m(K)}$ such that $\Upsilon^C_\xi(K)=0$ for all
$\xi\in G$.  
\end{thm}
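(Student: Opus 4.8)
The plan is to adapt the argument of Grigsby--Ruberman--Strle for $\tau$ \cite[Theorem~1.1]{grigsby2008knot}, substituting our Lemma~\ref{rationalknots} and Proposition~\ref{main2} for the input on the behaviour of $\tau$ under rational concordance. Assume $K$ bounds a smoothly properly embedded disk $\Delta \subset B^4$. As in the proof of Lemma~\ref{concordancecovers}, the $m$-fold cyclic branched cover $W := \Sigma^m(\Delta)$ is, since $m = p^r$ is a prime power, a rational homology $4$-ball with $\partial W = \Sigma^m(K)$; its branch locus (the fixed-point set of the covering action) is a smoothly properly embedded disk $\widetilde\Delta \subset W$ with $\partial\widetilde\Delta = \widetilde K$; and the spin structure $\mathfrak{t}_0$ of Lemma~\ref{labelling} on $W$ restricts to $\s_0$ on $\Sigma^m(K)$.

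Next I would invoke the standard ``half lives, half dies'' principle (\emph{cf.}~\cite{grigsby2008knot}): for a rational homology ball $W$ bounding a rational homology sphere $Y$, the subgroup
\[ G := \mathrm{Im}\!\left(H^2(W;\Z) \longrightarrow H^2(Y;\Z)\right) \]
has order $\sqrt{\,|H^2(Y;\Z)|\,}$, and under the Poincar\'e duality isomorphism $H^2(Y;\Z) \cong H_1(Y;\Z)$ it corresponds to $\ker\!\left(H_1(Y;\Z) \to H_1(W;\Z)\right)$. Applying this with $Y = \Sigma^m(K)$ and combining with the identity $|H_1(\Sigma^m(K);\Z)| = \text{det}_m(K)$ recorded in the proof of Lemma~\ref{folklore}, we obtain a subgroup $G < H^2(\Sigma^m(K);\Z)$ with $|G| = \sqrt{\text{det}_m(K)}$.

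Finally --- the step requiring the most care --- I would show that $(\Sigma^m(K), \widetilde K, \s_0 + \xi)$ is rationally slice for every $\xi \in G$. By the definition of $G$, such a $\xi$ is the restriction of some $\widehat\xi \in H^2(W;\Z)$, so $\mathfrak{t}_0 + \widehat\xi$ is a $\spinc$ structure on the rational homology ball $W$ that restricts to $\s_0 + \xi$ on $\partial W = \Sigma^m(K)$; since $\widetilde\Delta \subset W$ is a properly embedded disk with $\partial\widetilde\Delta = \widetilde K$, the triple $(W, \widetilde\Delta, \mathfrak{t}_0 + \widehat\xi)$ exhibits $(\Sigma^m(K), \widetilde K, \s_0 + \xi)$ as rationally slice. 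By Lemma~\ref{rationalknots} the complex $\CFKinfty(\Sigma^m(K), \widetilde K, \s_0 + \xi)$ is then stably equivalent to $\CFKinfty(S^3, U, \mathfrak{u})$, so by Proposition~\ref{main2} and the normalization of $C$ (which forces $\Upsilon^C(\CFKinfty(S^3, U, \mathfrak{u})) = 0$) we conclude
\[ \Upsilon^C_\xi(K) \;=\; \Upsilon^C\!\left(\CFKinfty(\Sigma^m(K), \widetilde K, \s_0 + \xi)\right) \;=\; 0 . \]
The main obstacle is reconciling the second and third steps: one must check that the single subgroup $G$ simultaneously has cardinality $\sqrt{\text{det}_m(K)}$ and consists precisely of those classes extending over $W$ --- i.e.\ that the two descriptions of $G$ given above genuinely coincide --- so that the $\spinc$ extension $\mathfrak{t}_0 + \widehat\xi$ is available for each $\xi \in G$.
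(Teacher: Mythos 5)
Your proposal is correct and follows essentially the same route as the paper's own proof: take $W=\Sigma^m(\Delta)$, let $G$ be the image of the restriction $H^2(W;\Z)\to H^2(\Sigma^m(K);\Z)$ (of order $\sqrt{\text{det}_m(K)}$ by the standard half-lives--half-dies argument, which the paper only sketches as ``a careful check with the long exact sequence''), extend $\s_0+\xi$ over $W$ via $\mathfrak{t}_0+\widehat\xi$, and conclude rational sliceness of $(\Sigma^m(K),\widetilde K,\s_0+\xi)$ so that Lemma~\ref{rationalknots}, Proposition~\ref{main2} and the normalization of $C$ give $\Upsilon^C_\xi(K)=0$. The reconciliation you flag as the main obstacle is exactly the point the paper also relies on, and it holds since under Poincar\'e--Lefschetz duality the image of $H^2(W;\Z)\to H^2(\partial W;\Z)$ is the kernel of $H_1(\partial W;\Z)\to H_1(W;\Z)$, whose order is $\sqrt{|H_1(\partial W;\Z)|}$ for a rational homology ball.
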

\begin{proof} 
By the previous discussion, if $K$ is a slice knot, then $\det_m(K)$ is a square. Indeed,
$\det_m(K)$ computes the cardinality of
$H_1(\Sigma^m(K); \Z)$, which is necessarily a square whenever $\Sigma^m(K)$ bounds a rational homology ball.

If $K$ is a slice knot, then $\Sigma^m(K)$ bounds a rational homology
ball $W$ to which the spin structure $\s_0$ of Lemma \ref{labelling}
extends. If $G < H^2(\Sigma^m(K); \Z)$ denotes the image of the
restriction map $\delta \colon H^2(W; \Z) \to H^2(\partial W; \Z)$ of the
long exact sequence for the pair $(W, \partial W)$, then $\s_0+\xi$
extends to a $\Spinc$ structure of $W$ for each $\xi \in G$. More
precisely, if $\mathfrak{t}_0$ denotes the $\spinc$ structure
extending $\s_0$ then $\partial (W, \mathfrak{t}_0+
\xi')=(\Sigma^m(K), \s_0+ \xi)$ where $\xi' \in H^2(W; \Z)$ is a class
such that $\delta(\xi')=\xi$.

Summarizing, whenever $K$ is a slice knot, for each $\xi \in G= \text{Im}(H^2(W; \Z) \to H^2(\partial W; \Z))$ the triple $(\Sigma^m(K), \widetilde{K}, \s_0 + \xi)$ represents a $\spinc$ rationally slice knot. Thus, as consequence of Proposition \ref{main2}
\[ \Upsilon^C_\xi(K)= \Upsilon^C(\CFKinfty(\Sigma^m(K), \widetilde{K}, \s_0 + \xi) = \Upsilon^C(\CFKinfty(S^3, U,\mathfrak{u}))=0 \ , \]
where the last identity follows from the normalization condition on the south-west
regions. A careful check with the long exact sequence
of the pair reveals that indeed $|G|=\sqrt{|H_1(\Sigma^m(K);  \Z)|}=\sqrt{\det_m(K)}$, proving the claim.
\end{proof}

\section{Homomorphisms from the knot concordance group}
\label{sec:homomorphisms}

The Upsilon-function $\Upsilon _K$ of \cite{OSS4} can be defined along 
the same ideas, by using special south-west regions. (This reformulation
of the invariants is due to Livingston~\cite{Livingston1}.)
Indeed, for $t\in [0,2]$ let us consider the half-plane
\[
H_t=\left\{ (x,y)\in \R ^2\middle| y\leq \frac{t}{t-2}x\right\} .
\]
Since $H_t$ (for the chosen $t\in [0,2]$) is a south-west region,
we get an invariant $\Upsilon^{H_t}(\K_*)$ for any knot type chain complex $\K_*$.
Define  $\Upsilon_t(\K_*)=-2 \cdot \Upsilon^{H_t}(\K_*)$.
The proof of \cite[Theorem~7.2]{Livingston1} readily generalizes to 
this setting, verifying the following proposition.

\begin{prop}\label{sums} $\Upsilon_t \colon \CFK/_\sim \to \R$ is a group homomorphism for each $t \in [0,2]$. \qed
\end{prop}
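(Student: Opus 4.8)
The plan is to use that the group operation on $\CFK/_\sim$ is tensor product and that $\Upsilon^{H_t}$ already descends to $\CFK/_\sim$ by Proposition~\ref{main2}; since $\Upsilon_t=-2\,\Upsilon^{H_t}$, it suffices to prove
\[
\Upsilon^{H_t}(\K_*\otimes\K'_*)=\Upsilon^{H_t}(\K_*)+\Upsilon^{H_t}(\K'_*)
\]
for knot type complexes $\K_*,\K'_*$, together with $\Upsilon^{H_t}(\F[U,U^{-1}])=0$, which holds because $H_t$ is normalized (its boundary line passes through the origin). First I would repackage $\Upsilon^{H_t}$ using the linear functional $\ell_t(x,y)=\tfrac{t}{2}x+\tfrac{2-t}{2}y$, characterized by the property that an $\F$-generator plotted at the lattice point $p$ lies in the translate $(H_t)_s$ of $H_t$ by $(s,s)$ if and only if $\ell_t(p)\le s$. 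This gives the min-max description $\Upsilon^{H_t}(\K_*)=\min_z\max_{g\in z}\ell_t(p(g))$, where $z=\sum_g g$ ranges over the cycles, written as sums of their $\F$-generators $g$, representing the generator of $H_{d(\K_*)}(\K_*)=\F$. The linearity of $\ell_t$, available precisely because $H_t$ is a half-plane with boundary through the origin, is the structural input of the whole argument and is the reason the conclusion can fail for a general south-west region.

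Second, I would prove subadditivity: under $\otimes$ the plotting positions add, since the Alexander grading, the algebraic grading and the $U$-powers are all additive, so if $z,z'$ realize $\Upsilon^{H_t}(\K_*),\Upsilon^{H_t}(\K'_*)$, then $z\otimes z'$ is a cycle representing the generator of $H_{d(\K_*)+d(\K'_*)}(\K_*\otimes\K'_*)$ (K\"unneth over $\F[U,U^{-1}]$, with no $\mathrm{Tor}$ terms since the modules are free), and each of its $\F$-generators is of the form $g\otimes g'$ with $\ell_t(p(g)+p(g'))=\ell_t(p(g))+\ell_t(p(g'))\le\Upsilon^{H_t}(\K_*)+\Upsilon^{H_t}(\K'_*)$. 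Hence $\Upsilon^{H_t}(\K_*\otimes\K'_*)\le\Upsilon^{H_t}(\K_*)+\Upsilon^{H_t}(\K'_*)$.

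Third, for the reverse inequality I would establish the duality lemma $\Upsilon^{H_t}(\K_*^\vee)=-\Upsilon^{H_t}(\K_*)$, where $\K_*^\vee=\mathrm{Hom}_{\F[U,U^{-1}]}(\K_*,\F[U,U^{-1}])$ represents the inverse of $\K_*$ in $\CFK/_\sim$. Passing to the dual reflects the plotted picture through the origin and reverses all arrows, so in the min-max description $\ell_t$ is replaced by $-\ell_t$ and "cycles representing the generator of $H_{d(\K_*)}$" by "cocycles $\phi$ on $\K_*$ (i.e.\ cycles in $\K_*^\vee$) that evaluate to $1$ on such a cycle"; thus the lemma is equivalent to the minimax identity $\min_z\max_{g\in z}\ell_t(p(g))=\max_\phi\min_{g\in\phi}\ell_t(p(g))$. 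The inequality "$\ge$" is immediate, since $\phi(z)=1$ forces $z$ and $\phi$ to share a generator in their supports. For "$\le$" I would use that $\K_*((H_t)_s)$ is a subcomplex (arrows point south-west): for $s$ slightly smaller than $s_0:=\Upsilon^{H_t}(\K_*)$ the map $H_{d(\K_*)}(\K_*((H_t)_s))\to H_{d(\K_*)}(\K_*)=\F$ is the zero map by minimality of $s_0$, and dualizing the long exact sequence of the pair $(\K_*,\K_*((H_t)_s))$ over the field $\F$ then shows that the generator of $H^{d(\K_*)}(\K_*)$ is represented by a cocycle supported on $\{\ell_t\ge s_0\}$, which gives $\max_\phi\min_{g\in\phi}\ell_t(p(g))\ge s_0$. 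Granting the lemma, I would apply subadditivity to $\K_*\otimes\K'_*$ and $(\K'_*)^\vee$, using $\K_*\otimes\K'_*\otimes(\K'_*)^\vee\sim\K_*$ together with Proposition~\ref{main2}, to obtain $\Upsilon^{H_t}(\K_*)\le\Upsilon^{H_t}(\K_*\otimes\K'_*)+\Upsilon^{H_t}((\K'_*)^\vee)=\Upsilon^{H_t}(\K_*\otimes\K'_*)-\Upsilon^{H_t}(\K'_*)$, the reverse inequality. Combining the two inequalities and multiplying through by $-2$ completes the proof, the identity element mapping to $0$ by normalization.

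I expect the main obstacle to be the "$\le$" direction of the minimax identity inside the duality lemma; once the linear-functional reformulation is in place everything else is formal bookkeeping with K\"unneth and the long exact sequence of a pair. For knots in $S^3$ this is precisely Ozsv\'ath--Szab\'o--Stipsicz's theorem that $\Upsilon_K$ is a concordance homomorphism, in Livingston's half-plane reformulation, and the argument goes through unchanged for abstract knot type complexes.
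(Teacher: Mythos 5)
Your proposal is correct, but it takes a genuinely different route from the paper's proof. The paper's argument is a one-step K\"unneth computation: it writes $\Upsilon^{H_t}$ as a minimum of the linear functional $\frac{t}{2}A+(1-\frac{t}{2})j$ over representatives of the generator of the top-degree homology, identifies representatives for $\K_*\otimes\K_*'$ with split tensors $\xi_i\otimes\eta_j$ of representatives of the two factors, and then splits the minimum over pairs into a sum of two minima. That is shorter, but the step asserting that the minimum for the tensor product is computed by split representatives is exactly the superadditivity direction, and it is left implicit there. You instead prove the two inequalities separately: subadditivity from the K\"unneth formula (morally the same as the paper's computation), and the reverse inequality from the duality lemma $\Upsilon^{H_t}(\K_*^\vee)=-\Upsilon^{H_t}(\K_*)$, proved by the cycle--cocycle minimax identity via the long exact sequence of the pair $(\K_*,\K_*((H_t)_s))$, and then combined with $\K_*\otimes\K_*'\otimes(\K_*')^\vee\sim\K_*$ and Proposition~\ref{main2}; this is Livingston's scheme for $\Upsilon$ transplanted to abstract knot type complexes. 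What your route buys is an explicit proof of the harder inequality; what it costs is the extra input that the dual complex represents the inverse in $\mathcalCFK/_\sim$ (asserted in Section~\ref{sec:invariance} when the group structure is introduced) and the finiteness of the set of generators in Maslov grading $d(\K_*)$ (also noted in the paper), which you need in order to choose $s$ just below $s_0$ in the long-exact-sequence step. The supporting details you sketch --- the characterization of $(H_t)_s$ by $\ell_t\le s$, the vanishing of Tor over $\F[U,U^{-1}]$, the reflection of filtration levels under dualization, and the identification of cycles of $\mathrm{Hom}(\K_*,\F[U,U^{-1}])$ with cocycles evaluating to $1$ on a representing cycle --- all check out, and, as you note, the linearity of $\ell_t$ is precisely where the half-plane hypothesis enters, so neither argument extends to general south-west regions.
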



Lemma~\ref{main1}, Theorem~\ref{concordancecovers} and
Proposition~\ref{sums} then imply:

\begin{thm} Let $m=p^r$ be a prime power. For a knot $K \subset S^3$ set 
$\Upsilon_{K,m}(t) = \Upsilon_t(\CFKinfty(\Sigma^m(K), \widetilde{K}, \s_0))$. Then the map $K \mapsto \Upsilon_{K,m}(t)$ descends to a group homomorphism $\mathcal{C} \to C^0[0,2]$. \QEDB
\end{thm}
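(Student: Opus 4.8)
The plan is to realize $K\mapsto\Upsilon_{K,m}(t)$, for each fixed $t\in[0,2]$, as a composition of three group homomorphisms that are already in hand, and then to observe that the resulting function of $t$ is continuous, so that the whole assignment lands in $C^0[0,2]$ and is a homomorphism there.

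First I would record the three maps. Since $m=p^r$ is a prime power, Lemma~\ref{folklore} guarantees that $\Sigma^m(K)$ is a $\Q HS^3$, so that $(\Sigma^m(K),\widetilde{K},\s_0)$ is a legitimate element of $\mathcal{C}_\Q$; by Lemma~\ref{concordancecovers} the assignment $\Sigma^m\colon K\mapsto(\Sigma^m(K),\widetilde{K},\s_0)$ respects connected sums and sends slice knots to rationally slice ones, hence descends to a homomorphism $\mathcal{C}\to\mathcal{C}_\Q^{\Spin}\subseteq\mathcal{C}_\Q$. Next, Lemma~\ref{main1} (which rests on Theorem~\ref{invariance}) says that $\CFKinfty$ descends to a homomorphism $\mathcal{C}_\Q\to\mathcalCFK/_\sim$. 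Finally, Proposition~\ref{sums} states that for each $t\in[0,2]$ the map $\Upsilon_t$ is additive under tensor product, i.e.\ a homomorphism $\mathcalCFK/_\sim\to\R$. Composing the three, for each fixed $t$ the map $[K]\mapsto\Upsilon_t(\CFKinfty(\Sigma^m(K),\widetilde{K},\s_0))=\Upsilon_{K,m}(t)$ is a well-defined group homomorphism $\mathcal{C}\to\R$; in particular $\Upsilon_{K,m}(t)$ is a concordance invariant and $\Upsilon_{K_0\#K_1,m}(t)=\Upsilon_{K_0,m}(t)+\Upsilon_{K_1,m}(t)$ for every $t$.

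The remaining point is that $t\mapsto\Upsilon_{K,m}(t)$ is genuinely an element of $C^0[0,2]$, and this is the only step requiring an actual argument rather than bookkeeping. Fixing $\K_*=\CFKinfty(\Sigma^m(K),\widetilde{K},\s_0)$ and using the description of $\Upsilon^{H_t}$ recalled in the proof of Proposition~\ref{sums}, one has
\[
\Upsilon^{H_t}(\K_*)=\min_{1\le i\le N}\Bigl\{\tfrac{t}{2}\,A(\xi_i)+\bigl(1-\tfrac{t}{2}\bigr)\,j(\xi_i)\Bigr\},
\]
where $\xi_1,\dots,\xi_N$ are the finitely many generators of Maslov grading $d(\K_*)$. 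This is a minimum of finitely many affine functions of $t$, hence piecewise linear and continuous on $[0,2]$, and so is $\Upsilon_{K,m}(t)=-2\,\Upsilon^{H_t}(\K_*)$. Since addition in $C^0[0,2]$ is pointwise and each evaluation-at-$t$ was shown above to be a homomorphism $\mathcal{C}\to\R$, it follows that $K\mapsto\Upsilon_{K,m}\in C^0[0,2]$ is a group homomorphism, as claimed. Thus the only real obstacle is this finiteness/continuity check; everything else is a formal concatenation of Lemma~\ref{concordancecovers}, Lemma~\ref{main1}, and Proposition~\ref{sums}.
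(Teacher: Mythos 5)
Your proposal is correct and follows essentially the same route as the paper, which proves the statement precisely by composing the homomorphisms of Lemma~\ref{concordancecovers}, Lemma~\ref{main1} and Proposition~\ref{sums}. Your extra check that $t\mapsto\Upsilon_{K,m}(t)$ is a minimum of finitely many affine functions, hence piecewise linear and continuous, is a point the paper leaves implicit, and it is a correct and welcome addition.
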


One can also form a spin$^c$ refined versions of these invariants.

\begin{thm} \label{twistedinvariants}Let $m=p^r$ be a prime power. For a knot $K \subset S^3$, and a cohomology class $\xi \in H^2(\Sigma^m(K); \Z)$ set 
\[\Upsilon_{K,\xi}(t) = \Upsilon_t(\CFKinfty(\Sigma^m(K), \widetilde{K}, \s_0+\xi)) \ .\]
If $K$ and $K'$ are concordant then there exists a subgroup $G \subset
H^2(\Sigma^m(K); \Z) \times H^2(\Sigma^m(K'); \Z)$ with $|G|=
\sqrt{\det_m(K)\cdot \det_m(K')}$ such that
$\Upsilon_{K,\xi}(t)=\Upsilon_{K',\xi'}(t)$ for each $(\xi, \xi') \in
G$. \QEDB
\end{thm}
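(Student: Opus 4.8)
The plan is to reduce the statement to Theorem~\ref{theorem3} exactly as the homomorphism property (Proposition~\ref{sums}) upgrades Theorem~\ref{theorem1} to a statement about $\Upsilon_t$. First I would observe that $K$ and $K'$ being concordant is equivalent to $K\#(-K')$ being slice, and that cyclic branched covers respect connected sums, so $\Sigma^m(K\#(-K'))=\Sigma^m(K)\#(-\Sigma^m(K'))$ with $\widetilde{K\#(-K')}=\widetilde{K}\#(-\widetilde{K'})$ and canonical spin structure $\s_0\#\overline{\s}_0$. Applying Theorem~\ref{theorem3} to the slice knot $K\#(-K')$ produces a subgroup $G<H^2(\Sigma^m(K)\#(-\Sigma^m(K'));\Z)\cong H^2(\Sigma^m(K);\Z)\times H^2(\Sigma^m(K');\Z)$ of cardinality $\sqrt{\det_m(K\#(-K'))}=\sqrt{\det_m(K)\cdot\det_m(K')}$ (using multiplicativity of the Alexander polynomial, hence of $\det_m$, under connected sum), with the property that for every $(\xi,\xi')\in G$ the triple $(\Sigma^m(K)\#(-\Sigma^m(K')),\widetilde{K}\#(-\widetilde{K'}),(\s_0+\xi)\#(\overline{\s}_0+\xi'))$ is $\spinc$ rationally slice, i.e.\ represents $0$ in $\mathcal{C}_\Q$.

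Next I would feed this into the algebraic machinery. By Lemma~\ref{main1} the map $\CFKinfty$ is a homomorphism $\mathcal{C}_\Q\to\mathcalCFK/_\sim$, so the class of $\CFKinfty(\Sigma^m(K),\widetilde{K},\s_0+\xi)\otimes_{\Laur}\CFKinfty(-\Sigma^m(K'),-\widetilde{K'},\overline{\s}_0+\xi')$ is the class of the complex of a rationally slice knot, hence stably equivalent to $\CFKinfty(S^3,U,\mathfrak{u})$. Since in $\mathcalCFK/_\sim$ the inverse of $[\CFKinfty(\Sigma^m(K'),\widetilde{K'},\s_0+\xi')]$ is represented by the dual complex, which is $[\CFKinfty(-\Sigma^m(K'),-\widetilde{K'},\overline{\s}_0+\xi')]$ (cf.\ the identity $-(Y,K,\s)=(-Y,-K,\overline{\s})$ recorded in Section~\ref{sec:invariance}), we conclude $[\CFKinfty(\Sigma^m(K),\widetilde{K},\s_0+\xi)]=[\CFKinfty(\Sigma^m(K'),\widetilde{K'},\s_0+\xi')]$ in $\mathcalCFK/_\sim$. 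Now Proposition~\ref{sums} says $\Upsilon_t$ factors through $\mathcalCFK/_\sim$ (being a group homomorphism, it is in particular a well-defined function of the stable equivalence class), so
\[
\Upsilon_{K,\xi}(t)=\Upsilon_t(\CFKinfty(\Sigma^m(K),\widetilde{K},\s_0+\xi))=\Upsilon_t(\CFKinfty(\Sigma^m(K'),\widetilde{K'},\s_0+\xi'))=\Upsilon_{K',\xi'}(t)
\]
for every $(\xi,\xi')\in G$, which is the assertion.

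The only genuinely non-routine point is identifying the subgroup $G\subset H^2(\Sigma^m(K);\Z)\times H^2(\Sigma^m(K');\Z)$ produced by Theorem~\ref{theorem3} and verifying it is of the claimed cardinality: Theorem~\ref{theorem3} as stated gives a subgroup of $H^2(\Sigma^m(K\#(-K'));\Z)$, and one must use the natural splitting of cohomology of a connected sum together with the compatibility of the canonical spin structures $\s_0$ under connected sum to transport it to the product. The cardinality count is then immediate from $\sqrt{\det_m(K\#(-K'))}=\sqrt{\det_m(K)}\cdot\sqrt{\det_m(K')}$, provided each $\det_m$ is a square when the relevant branched cover bounds a rational homology ball — but here we only need the product to be a square, which follows since $\Sigma^m(K\#(-K'))$ bounds a rational homology ball by sliceness of $K\#(-K')$. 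All remaining steps are formal manipulations within the group $\mathcalCFK/_\sim$ already set up in Sections~\ref{sec:invariance}--\ref{uno}.
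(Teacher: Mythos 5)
Your proposal is correct and follows exactly the route the paper intends: the theorem is stated with its proof omitted precisely because it reduces, as you do, to applying the slice obstruction of Theorem~\ref{theorem3} (more precisely, the rational sliceness of $(\Sigma^m(K\#(-K')),\widetilde{K\#(-K')},\s_0+\xi)$ for $\xi$ in the metabolizer established in its proof) together with Lemma~\ref{main1}, the connected-sum/mirror behaviour of $\CFKinfty$, and Proposition~\ref{sums}. The only point to watch is the conjugation sign on the second factor when transporting $G$ to $H^2(\Sigma^m(K);\Z)\times H^2(\Sigma^m(K');\Z)$, which is harmless by conjugation symmetry of the invariants.
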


The following theorem gives obstructions to finite concordance order. 

\begin{thm}
Let $K\subset S^3$ be a knot and $p$ a prime.  Let $\mathcal{H}_p$
denote the set of subgroups of $H^2(\Sigma^2(K);\Z)$ of order $p$.
Set 
\[\mathcal{S}_t(K,p)= \min \left\{\left| \sum_{H \in
    \mathcal{H}_p} n_H \sum_{\xi \in H} \Upsilon_{K,\xi}(t)\right| \colon 
  n_H \in \Z_{\geq 0}, \ \sum_{H \in \mathcal{H}_p} n_H >0\right\}
  \ ,\] 
If $K$ has finite order in the smooth concordance group then
  $\mathcal{S}_t(K,p) \equiv 0$.
\end{thm}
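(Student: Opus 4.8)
The plan is to reduce the statement, via the homomorphism properties established above, to an $\mathbb{F}_p$-linear bookkeeping argument for the metabolizer produced by Theorem \ref{theorem3}. Suppose $K$ has order $d$ in $\mathcal{C}$, so the $d$-fold connected sum $dK$ is slice, and write $A = H^2(\Sigma^2(K);\Z)$, $|A| = \det(K)$. Since $\Sigma^2(dK) = \#^d\Sigma^2(K)$ with fixed locus $\widetilde{dK} = \#^d\widetilde K$, and since the canonical spin structure together with any twist $\s_0 + (\xi_1,\dots,\xi_d)$ is the connected sum of the corresponding data on the $d$ factors, the Künneth isomorphism for $\CFKinfty$ (\emph{cf.}~\cite{OS7}) together with Proposition \ref{sums} gives
\[ \Upsilon_{dK,(\xi_1,\dots,\xi_d)}(t) = \sum_{i=1}^d \Upsilon_{K,\xi_i}(t), \qquad (\xi_1,\dots,\xi_d)\in A^d = H^2(\Sigma^2(dK);\Z). \]
Applying Theorem \ref{theorem3} to the slice knot $dK$ (with $m=2$; note $\det_2(dK)=\det(K)^d=|A|^d$) yields a subgroup $G < A^d$ with $|G| = |A|^{d/2}$ such that $(\Sigma^2(dK),\widetilde{dK},\s_0+\xi)$ is rationally slice for every $\xi\in G$, whence $\sum_{i=1}^d\Upsilon_{K,\xi_i}(t)=\Upsilon_{dK,\xi}(t)=0$ for all $\xi=(\xi_1,\dots,\xi_d)\in G$ and all $t$ (by Proposition \ref{main2} and the normalization of $H_t$). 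Taking $\xi=0\in G$ this forces $d\cdot\Upsilon_{K,0}(t)=0$, i.e.\ $\Upsilon_{K,0}(t)\equiv 0$ — as one also sees from the fact that $K\mapsto\Upsilon_t(\CFKinfty(\Sigma^2(K),\widetilde K,\s_0))$ is a homomorphism $\mathcal{C}\to C^0[0,2]$ and $K$ is torsion.

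The combinatorial core — which I expect to be the main obstacle — is to turn the single ``metabolizer'' relation above into an honest \emph{non-negative} combination of the quantities $\sum_{\xi\in H}\Upsilon_{K,\xi}(t)$ with $H\in\mathcal{H}_p$. For this I would pass to $p$-torsion. Set $V=A[p]$, an $\mathbb{F}_p$-vector space, and $U = G\cap V^d = G[p]\subseteq V^d$, an $\mathbb{F}_p$-subspace; since $|G|^2=|A|^d$ and $p\mid|A|$, we have $p\mid|G|$, hence $U\neq 0$. Write $f(v)=\Upsilon_{K,v}(t)$ for $v\in V$, and for $i=1,\dots,d$ let $\pi_i\colon V^d\to V$ be the $i$-th projection, $U_i=\pi_i(U)$, $c_i=|U|/|U_i|=|\ker(\pi_i|_U)|$ (a power of $p$). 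Summing the relation $\sum_i f(u_i)=0$ over all $u\in U$ gives $\sum_i c_i\sum_{v\in U_i}f(v)=0$. Every nonzero vector of $U_i$ spans a unique line $H\in\mathcal{H}_p$ with $H\subseteq U_i$, so $\sum_{v\in U_i}f(v)=(1-N_i)\,f(0)+\sum_{H\subseteq U_i}\sum_{\xi\in H}f(\xi)$, where $N_i=\#\{H\in\mathcal{H}_p:H\subseteq U_i\}$; using $f(0)=\Upsilon_{K,0}(t)=0$ and collecting terms by $H$ gives
\[ \sum_{H\in\mathcal{H}_p} n_H \sum_{\xi\in H}\Upsilon_{K,\xi}(t) = 0, \qquad n_H := \sum_{i\,:\,H\subseteq U_i} c_i. \]
These $n_H$ are non-negative integers, independent of $t$, and not all zero — since $U\neq 0$ some $U_i\neq 0$, and then $n_H\geq c_i\geq 1$ for any line $H\subseteq U_i$. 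So $(n_H)$ is admissible in the definition of $\mathcal{S}_t(K,p)$ and realizes the value $0$; therefore $\mathcal{S}_t(K,p)\equiv 0$. (If $p\nmid\det(K)$ then $\mathcal{H}_p=\varnothing$ and there is nothing to prove.)

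The delicate point, as flagged, is the passage to order-$p$ subgroups: the right choices are to sum over $U=G[p]$ rather than over $G$ itself — whose coordinate projections need not consist of $p$-torsion — and to bookkeep the weight of each line through the projections $\pi_i(U)$; it is also essential to have first disposed of the term $\Upsilon_{K,0}(t)$, which would otherwise survive in the final identity. The naturality inputs used above (additivity of the canonical spin structure and of twists under connected sum, and $\det_2(dK)=\det(K)^d$) are routine, and I would verify them without further comment.
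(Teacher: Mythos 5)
Your argument is correct and is essentially the paper's: both apply the metabolizer of Theorem~\ref{theorem3} to the slice connected sum $dK$, use additivity to get $\sum_{i=1}^d \Upsilon_{K,\xi_i}(t)=0$ on it, and then average over $p$-torsion to convert this into a non-negative integer combination of the line-sums $\sum_{\xi\in H}\Upsilon_{K,\xi}(t)$. The only difference is bookkeeping: the paper takes a single order-$p$ element of $G$ (Cauchy's theorem) and sums over its $p$ multiples, with $n_H$ counting the coordinates generating $H$, whereas you sum over all of $G[p]$ with the projection multiplicities $c_i$ --- an equivalent, slightly heavier count that also makes explicit the vanishing of the $\Upsilon_{K,0}$ terms, which the paper leaves implicit.
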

\begin{proof}
A similar statement already appeared in \cite{grigsby2008knot}, we just follow their argument.
If $K\# \dots \# K=0$ in the knot concordance group then, for some $n >0$, there is a subgroup $G \subset H^2(\Sigma^2(K);\Z)\times \dots \times H^2(\Sigma^2(K);\Z)$ with $|G|= \det(K)^{\frac{n}{2}}$ such that 
\[ 0=\Upsilon_{K\# \dots \# K, \xi}(t)= \sum_{i=1}^n \Upsilon_{K, \xi_i}(t)\] 
for each $\xi= (\xi_1, \dots, \xi_n)\in G$. We used the
identity $\Upsilon_{K\#K',\xi+\xi'}(t)=
\Upsilon_{K,\xi}(t)+\Upsilon_{K',\xi'}(t)$, which follows from \cite[Section 7]{OS7} and Proposition \ref{sums}.

Given a prime $p$, if $p$ does not divide $\det(K)=
|H^2(\Sigma^2(K);\Z)|$ then by Lagrange's theorem
$\mathcal{H}_p=\emptyset$.  If $p$ does divide $\det (K)$,then it also
divides $|G|$ and Cauchy's theorem guarantees the existence of an
order $p$ element $\xi= ( \xi_1, \dots, \xi_n)$ in $G$. Thus
\[  0=
\sum_{j=0}^{p-1}\Upsilon_{K\# \dots \# K, j \cdot \xi}(t)= 
\sum_{j=0}^{p-1}\sum_{i=1}^n \Upsilon_{K, j \cdot \xi_i}(t)=
\sum_{i=1}^n \sum_{j=0}^{p-1} \Upsilon_{K, j \cdot \xi_i}(t)= 
\sum_{i=1}^n \sum_{\xi \in H_i} \Upsilon_{K, \xi}(t) \ , \]
where $H_i \subset H^2(\Sigma^2(K);\Z)$ denotes the subgroup generated by $\xi_i$ in $H^2(\Sigma^2(K);\Z)$.
\end{proof}

\section{Alternating torus knots}
\label{sec:AltTorus}

Alternating torus knots are those of the form $T_{2,p}$ for some odd
integer $p=2n+1$. The branched double cover of $T_{2,p}$ is the lens
space $\lu$. In what follows, the lift of $T_{2,p}$ along the branched
covering $\pi \colon L(p,1) \to S^3$ will be denoted by
$\widetilde{T}_{2,p}$.

Let $K_{p,q}$ denotes the $2$-bridge knot corresponding to $\frac{p}{q}$ ( with $p>q>0$, $p$ odd). Notice that the alternating torus knot $T_{2,2n+1}$ is isotopic to the $2$-bridge knot $K_{p,q}$ with parameters $p = 2n+1$ and $q=1$.
In \cite{grigsby2006knot} Grigsby proved the existence\footnote{The actual isomorphism is between the ``hat'' versions of $HFK$; we will address the suitable extension in Section \ref{sec:combinatorial}.} of a graded quasi-isomorphism
\begin{equation}\label{eqn:griso}
L \colon \widehat{CFK}(K_{p,q}) \longrightarrow \widehat{CFK}(L(p,q), \widetilde{K}_{p,q}, \s_0), 
\end{equation}  
where $\s_0$ denotes the only spin structure of $L(p,q)$.

In this section we present a proof of Theorem \ref{thm:interi} based
on combinatorial knot Floer homology (more precisely, grid homology,
extended to lens spaces).  As suggested by an anonymous referee, this
result can alternatively be obtained as a consequence of the work of
J.~Rasmussen and S.~Rasmussen \cite[Lemma~3.2]{rasmussens}.

\subsection{Combinatorial Knot Floer homology}\label{sec:combinatorial}
In \cite{manolescu2009combinatorial} Manolescu, Ozsv\'ath and Sarkar
gave a combinatorial description of $HFK^\circ$ for knots in the
$3$-sphere using grid diagrams, cf. also~\cite{ozsvath2015grid}. This
construction has been extended by Baker, Grigsby and Hedden to the
case of knots in lens spaces. We will recall their definitions in the
specific relevant cases, pointing the interested reader to their paper
\cite{baker2008grid}.

Given a lens space $L(p,q) = S^3_{-\frac{p}{q}} (\bigcirc)$ with $p>q$ and $p$ odd, there is a
distinguished choice for the affine isomorphism between $Spin^c (\lp)$
and $H_1(\lp;\Z)$, when the unique spin structure $\s_0$ is identified
with the trivial homology class. Therefore in what follows we will
adopt the convention of labelling $\s \in \Spinc(\lp)$ by a number $h
\in \{-\frac{p-1}{2}, \ldots , \frac{p-1}{2}\}$; to such $h$ we
associate the $\spinc$ structure corresponding to $h \in H_1(\lp;\Z)
\cong \Z_p$. 
Notice that with the conventions above we have that $\overline{\s_0 + h} = \s_0 -h$, where the bar denotes the conjugation on $\spinc$ structures.  

A \emph{twisted grid diagram} for $\lu$ is a planar representation of
a genus one Heegaard diagram for $\lu$, produced by the minimally
intersecting one by doubling both curves. We choose to draw this
diagram as shown in Figure \ref{fig:griglia5-2}, where each little
square has edges of length one. This is a $2\times 2p$ grid, with a
specific identification of its boundary: the left and right edges of
the grid are identified as $(0,t) \sim (2p,t)$ with $t \in [0,2]$,
while the top part is identified with the bottom after a shift taking
$(x,2)$ to $(x-2,0)$ $\modd{2p}$. The integer $n$ is called the \emph{dimension} of $G_p$.

By placing in the squares two sets of markings $\XX = \{ X_0,X_1\} $ and $\OO =\{ O_0,O_1\}$,
we get a twisted grid diagram $G_p$, which determines a knot $K$ in $\lu$. 
It was shown in \cite{grigsby2006combinatorial} that if we are
considering the lift of a $2$-bridge knot to $\lp$, then the resulting
knot can be presented as above.

In the case of the lift of $T_{2,2n+1}$ to $L(2n+1,1)$, we get a
$2\times 2(2n+1)$ grid with $\OO = \{ 0,1\} $ and $\XX = \{
2n+1,2n+2\} $, where the  notation means that we are placing the first
$\OO$ marking in the center of the $0$-th square from the left in the
bottom-most row of $G_p$, and the second $\OO$ in the first box on the
top row, and likewise for the $\XX$s. One example is shown in Figure
\ref{fig:griglia5-2}.
\begin{figure}
\includegraphics[width=7cm]{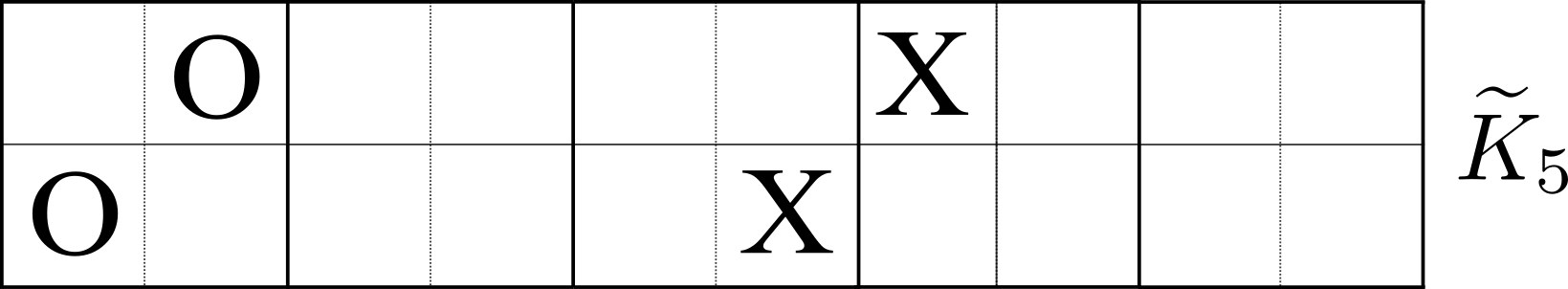}
\caption{The grid $G_5$ for the lift of $T_{5,2}$ in $L(5,1)$.}
\label{fig:griglia5-2}
\end{figure}

Recall that the generators of the knot Floer complex are given by intersections of the
$\alpha$- and $\beta$-curves determining the Heegaard splitting. Here
the generating set is given by bijections between the horizontal and
vertical circles (called
$\alpha_0$, $\alpha_1$ and $\beta_0, \beta_1$ respectively) in the grid, after the identifications. We require that the two points of a generator lie on different curves.

Since each pair $(\alpha_i, \beta_j)$ intersect exactly $p$
times, it is easy to show that the generating set $S(G)$ is in
bijection with $\mathfrak{S}_2 \times \Z_p^2$. Generators
corresponding to $ Id_{\mathfrak{S}_2} \times (a_0, a_1) $ will be
denoted by $x_{a_0,a_1}$, while those of the form $(12) \times (a_0,
a_1)$ by $y_{a_0,a_1}$. The two pairs $(a,b) \in \Z_p^2$ will be
called the \emph{$p$-coordinates} of the generator.

Given such a multi-pointed grid $G_p$, representing the pair $(\lu,
\widetilde{T}_{2,p})$, the complex $GC^\circ (G_p)$ is the free
$R^\circ$-module generated by $S(G_p)$. The specific choice of the
base ring, together with the choice of a differential, will determine
the flavour of grid homology we will consider. In what follows we will
restrict to $R^\infty = \F [V_0^{\pm 1},V_1^{\pm 1}]$ or $R^- = \F
[V_0,V_1]$, corresponding respectively to the $\infty$ and
minus versions of grid homology.  The variables $V_i$ act on
the complex, and can be thus thought of as graded endomorphisms;
they correspond to the $\OO$-markings.

%

Both the Alexander and Maslov degrees $M(x), A(x) \in \Q$  of a generator $x \in S(G_p)$
can be combinatorially defined from the grid as explained in
\cite{grigsby2006combinatorial}.
(Although the $A$-grading is in
general $\Q$-valued, but $A(x)\in \Z$ for all generators if the knot
$K$ is null-homologous, which is always the case for lifts of knots to their cyclic covers.)  
Finally we can associate to $x$ a
$\spinc$ structure $\s(x) \in \Spinc(\lu) \cong \Z_p$ following the
recipe of \cite[Section~2.2]{baker2008grid}: if the $p$-coordinates of
$x$ are $(a,b)$, then $\s(x) \equiv a + b \modd{p}$.

Each $V_i$ decreases the Maslov degree by two, the Alexander degree by one, and does not
change the $\spinc$ structure. If $V_0^r V_1^s x \in \F[V^{\pm 1}_0,V^{\pm 1}_1]\langle S(G_p)\rangle$, 
then the number $-r - s$ of $V_i$ variables is called the \emph{algebraic filtration}, and is denoted by $j(V_0^r V_1^s x )$.

The differential for the $\infty$ flavour is the
$\F[V_0^{\pm1},V_1^{\pm1}]$-chain map defined as follows:
\begin{equation}\label{eqn:differential}
\partial^\infty (x) = \sum_{y \in S(G)} \sum_{\substack{r \in Rect^\circ (x,y) \\ r \cap \XX = \emptyset} }  V_0^{O_0(r)} V_1^{O_1(r)} \cdot y
\end{equation}
In Equation \eqref{eqn:differential}, $Rect^\circ (x,y)$ denotes the
set of (oriented) empty rectangles $r$ embedded in the grid, such that
the edges of $r$ are alternatively on $\alpha$ and $\beta$ curves, and
one component of $x$ is in the lower-left vertex of $r$; a rectangle
is \emph{empty} if $\mathring{r}$ does not contain any component of
$x$ or $y$. Note that in the expression \eqref{eqn:differential} we
are also requiring trivial intersection between rectangles and $\XX$
markings.

To obtain the minus flavour, keep the same differential and restrict
the base ring to $\F[V_0,V_1]$.  If we impose $V_1 = 0$ at the complex
level, we obtain a complex computing the hat version of knot Floer
homology. These are finitely generated modules, over $\F[U^{\pm 1}]$,
$\F[U]$ and $\F$ respectively. Here $U$ is the map induced in homology
by either of the $V_i$s (and it is not hard to see that multiplication
by $V_0$ and $V_1$ coincide in homology).

\begin{rem}
If we fix an intersection point between one $\alpha$ and one $\beta$ curve, and $i \in \Z_p$, there is only one generator $x \in S(G_p)$ having that intersection as a component, and such that $\s(x) = i$.
\end{rem}

\begin{rem}
As alluded in the introduction, the quasi-isomorphisms from \cite{grigsby2006knot} $$L \colon
\widehat{CFK}(K_{p,q}) \longrightarrow \widehat{CFK} (L(p,q),
\widetilde{K}_{p,q}, \s_0)$$ induce quasi-isomorphisms on the other
flavours (\emph{i.e.}~the filtered $-$ and $ \infty$ versions) as
well. This follows from the fact that $L$ defined at the complex level
is a grading preserving isomorphism of bi-graded vector spaces, and in
this case both complexes have trivial differential. This implies that
there can only be horizontal and vertical differentials, dictated by
the spectral sequence to $\widehat{HF}$ of the underlying manifold
(which is an $L$-space in both the domain and codomain of
$L$).
\end{rem}

The purpose of the next section will be to use this combinatorial
description for a recursive computation of $GC^\infty(L(2n+1,1),
\widetilde{T}_{2n+1,2},\s_0 + h)$. Recall that thanks to
\cite[Theorem~1.1]{baker2008grid}, we know that this complex is in
fact chain homotopic to its holomorphic counterpart
$\CFKinfty(L(2n+1,1), \widetilde{T}_{2n+1,2},\s_0 + h)$.

\subsection{Computations}\label{sec:computation}
If $p = 2n+1$ is an odd integer, and $h \in \{0, \ldots , n\}$ we want to prove the existence of the following quasi-isomorphisms:
\begin{equation}\label{eqn:lp1}
CFK^\circ (L(p,1), \widetilde{T}_{2,p}, \s_0 \pm h) \simeq CFK^\circ (T_{2,p-2h}) .
\end{equation}
Note that there will be a shift in the Maslov grading, given by the
difference in the correction terms, which can be computed using the
recursive formula in \cite[Section~4.1]{ozsvath2003absolutely}.

We can prove the existence of the isomorphisms in Equation
\eqref{eqn:lp1} by constructing graded quasi-isomorphisms
\begin{equation*}
\widetilde{F}_{p,h} \colon CFK^\circ (L(p,1),\widetilde{T}_{2,p}, \s_0 + h) \longrightarrow CFK^\circ(L(p-2h,1),\widetilde{T}_{2,p -2h}, \s_0 ), 
\end{equation*}
and post composing with the suitable inverse of the isomorphism $L$. Notice that we can write $\widetilde{F}_{p,h}$ as the composition of some more elementary maps
\begin{equation}
F_{p,h} \colon CFK^\circ (L(p,1),\widetilde{T}_{2,p},\s_0 +h) \longrightarrow CFK^\circ (L(p-2,1),\widetilde{T}_{2,p-2},\s_0 +h -1). 
\end{equation}
shifting the $\spinc$ structure by one and decreasing $p$ by two, as shown in Figure \ref{fig:mappeliftinteri}. We can define the analogous maps $\overline{F}_{p,h}$ by precomposing $F_{p,h}$ with conjugation on $\spinc$ structures.

The grid complex $GC^\circ (L(p-2,1),\widetilde{T}_{2,p-2}, \s_0 + h -1)$ has four generators less than $GC^\circ (L(p,1),\widetilde{T}_{2,p}, \s_0 + h)$. We will choose some generators to be removed from the complex associated to $\widetilde{T}_{2,p}$ in such a way that we are cancelling precisely the four generators that comprise an acyclic subcomplex, and their cancellation induces the needed quasi-isomorphism.

\begin{figure}[h!]
\includegraphics[width=10cm]{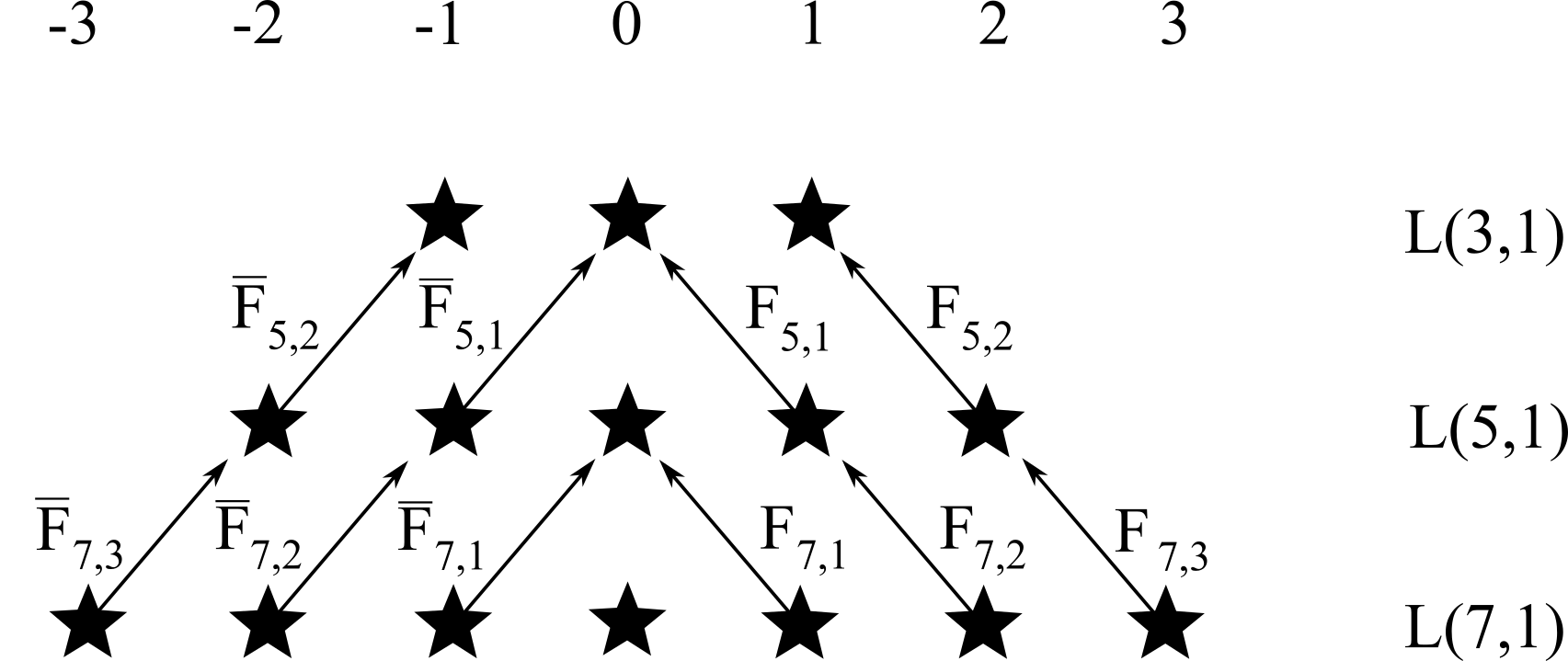}
\caption{How the maps $F_{p,h}$ fit together. The horizontal label denotes the $\spinc$ structures in the lens space written on the right. Stars represent the corresponding knot Floer homology groups.}
\label{fig:mappeliftinteri}
\end{figure}

Let us introduce some objects that will come into play in the proof of
Theorem \ref{thm:interi}. These will be some model complexes,
quasi-isomorphic to $GC^\circ (L(p,1),\widetilde{T}_{2,p}, \s_0 + h )$
in a given Alexander degree.

An \emph{electric pole} of length $e \ge 0$ is the graded complex over
$\F$ described in Figure \ref{fig:palo}. It consists of $2e + 1$
generators and $4e-2$ differentials, denoted as dots and arrows
respectively. If $e=0$ the pole consists of 
a single generator. All generators in a pole have the same
Alexander degrees.

\begin{figure}
\includegraphics[width=7cm]{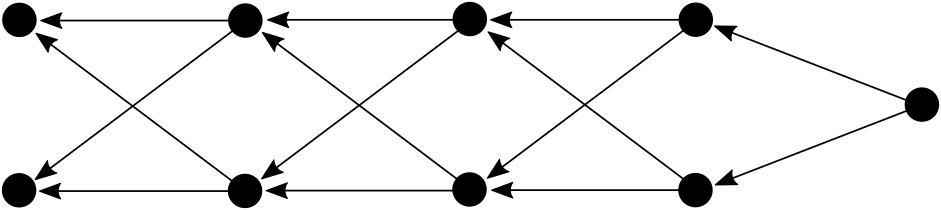}
\caption{An electric pole of length $4$. Each dot represents a
  generator over $\F$ in the $(j,A)$-plane, and arrows (from right to
  left) denote differentials. Edges represent differentials not
  involving multiplication by any $V_i$. All generators have the same
  Alexander degree.}
\label{fig:palo}
\end{figure}
A \emph{wire} of length $w>0$ is a graded complex composed by $2w+2$
generators and $4w$ differentials, as shown in Figure
\ref{fig:cavi}. The differentials in this case carry a label,
specifying if they are acting as multiplication by $V_0$ or $V_1$.

\begin{figure}
\includegraphics[width=7cm]{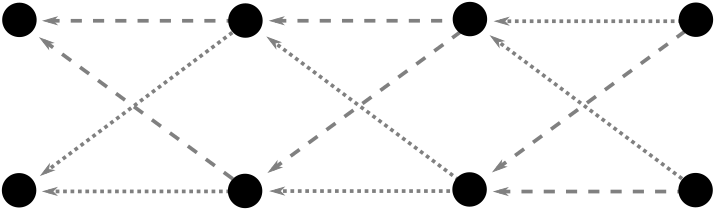}
\caption{A wire of length $3$. Dotted arrows are multiplication by $V_0$, while dashed by $V_1$.}
\label{fig:cavi}
\end{figure}

We can form a new complex by ``fusing'' together two electric poles
(of the same length) and a wire, obtaining a complex whose homology
has rank $1$, generated by the circled generator in Figure
\ref{fig:complessogenerale}.  More formally, consider two electric
poles of length $e$ and a wire of length $w$, and identify the two
leftmost generators of one pole with the rightmost of the wire, then
identify the two leftmost generators of the other pole with the
leftmost of the wire, as depicted in Figure
\ref{fig:complessogenerale}. We denote this complex as $C[e,w]$.  If
$e = 0$, each pole is composed by a single generator, and the two
leftmost arrows in the wire converge on the left one, while the
rightmost are the differentials of the right one, as shown in the
bottom of Figure \ref{fig:complessogenerale}.  Note that $C[e,w]$ has
$2w + 4e$ generators. We will now prove that the graded complex
$GC^\infty (G_p,\s_0 + h)$ can be built from unions of wires and
poles.

\begin{figure}
\includegraphics[width=8cm]{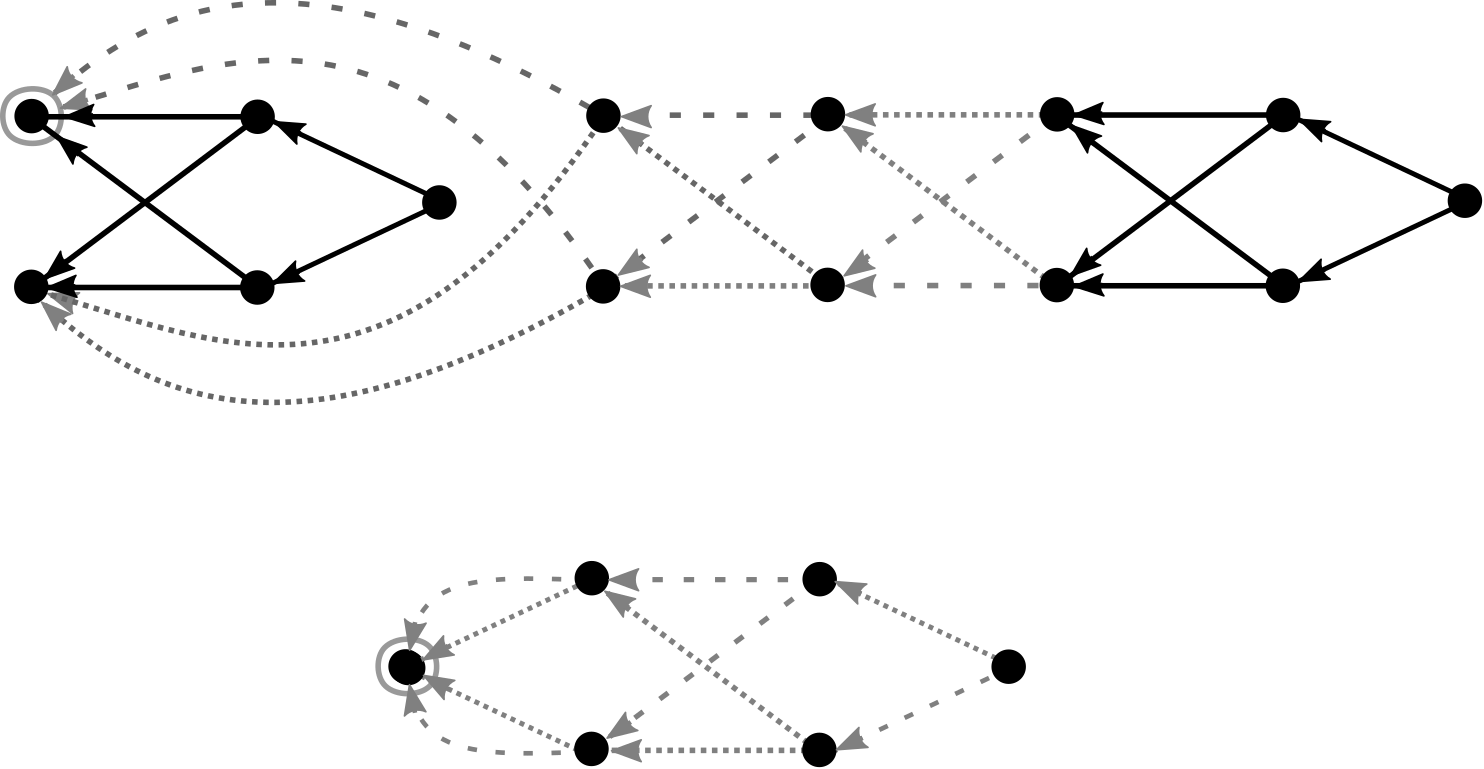}
\caption{In the top part the base complex $C[2,3]$. Black edges denote differentials not involving $V_i$ variables, dotted edges multiplication by $V_0$, and dashed by $V_1$. The circled generator is the only survivor in homology. In the lower part of the Figure, the case $e=0$ and $w=1$ (corresponding to the lift of the trefoil in its spin structure).}
\label{fig:complessogenerale}
\end{figure}

\begin{prop}\label{prop:qisom}
The complex $GC^\infty (L(p,1),\widetilde{T}_{2,p}, \s_0 \pm h)$ is quasi-isomorphic to $C[h,p -2h] \otimes \Z_2[V_0^{\pm 1},V_1^{\pm 1}]$, which in turn is quasi-isomorphic to $\CFKinfty (T_{2,p-2h})$.
\end{prop}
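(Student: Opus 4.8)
The plan is to establish the two quasi-isomorphisms separately. It suffices to treat the spin$^c$ structure $\s_0+h$: since $\overline{\s_0+h}=\s_0-h$ and the whole construction is symmetric under conjugation of spin$^c$ structures (which interchanges the two filtrations), the case $\s_0-h$ then follows formally.

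For the first quasi-isomorphism I would compute $GC^\infty(G_p,\s_0+h)$ directly from the grid diagram. In the spin$^c$ structure $\s_0+h$ the generating set consists of the $2p$ generators $x_{a,h-a}$ and $y_{a,h-a}$, $a\in\Z_p$, since $\s(x_{a,b})\equiv a+b$ and we impose $a+b\equiv h\pmod p$. The next step is to enumerate the empty rectangles in the $2\times 2p$ grid with $\OO=\{0,1\}$ and $\XX=\{2n+1,2n+2\}$ that miss the $\XX$-markings, keeping careful track of the cyclic identifications of the boundary of $G_p$ and of the monomials $V_0^{O_0(r)}V_1^{O_1(r)}$ contributed by each such rectangle. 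Organising the outcome by Alexander degree, the $A$-preserving part of the differential on each $A$-level assembles into two copies of an electric pole of length $h$, while the differentials dropping $A$ by one --- which necessarily carry a $V_0$ or a $V_1$ --- are those of a wire of length $p-2h$ fusing consecutive $A$-levels; after a filtered change of basis this produces a graded, bi-filtered isomorphism
\[ GC^\infty(L(p,1),\widetilde{T}_{2,p},\s_0+h)\;\cong\;C[h,p-2h]\otimes\F[V_0^{\pm1},V_1^{\pm1}]. \]
It is natural to organise this computation recursively in $p$ via the maps $F_{p,h}$ of Figure~\ref{fig:mappeliftinteri}: on the model side $F_{p,h}$ is exactly the operation of shortening each of the two electric poles by one unit, i.e.\ of quotienting by the $4$-generator acyclic subcomplex spanned by the two shortened ends --- this is the cancellation indicated just before the statement --- so the isomorphism propagates from $(p-2,h-1)$ to $(p,h)$; at the bottom of the recursion $h=0$, and the pair is $(L(p-2h,1),\widetilde{T}_{2,p-2h},\s_0)$, where it is also pinned down by Grigsby's isomorphism $L$ of \eqref{eqn:griso} in its $\infty$ incarnation.

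The second quasi-isomorphism is more formal. Shortening an electric pole is always a quasi-isomorphism, since a pole of length $e$ is obtained from one of length $e-1$ by adjoining a two-step acyclic complex; hence $C[h,p-2h]\simeq C[0,p-2h]$. The complex $C[0,w]$ is the zig-zag obtained by collapsing each of the two ends of a length-$w$ wire to a single generator, and, reading the $V_0$/$V_1$ labels off the wire, it is precisely the (stabilised) staircase model of the $L$-space knot $T_{2,w}$; consequently $C[0,p-2h]\otimes\F[V_0^{\pm1},V_1^{\pm1}]$ is graded, bi-filtered chain homotopy equivalent to $\CFKinfty(T_{2,p-2h})$, up to the overall Maslov shift $d(L(p,1),\s_0+h)$, which one evaluates from the recursion of \cite[Section~4.1]{ozsvath2003absolutely}. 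Alternatively one can bypass the explicit matching: both complexes are thin, with homology of rank one over $\F[U,U^{-1}]$ and the same $\widehat{HFK}$-ranks --- the latter determined by the Alexander polynomial of $\widetilde{T}_{2,p}$ in $\s_0+h$, which equals $\Delta_{T_{2,p-2h}}$ by the Turaev-refined product formula --- so the classification of thin complexes in \cite[Lemmas~5~and~7]{Petkova} forces the equivalence.

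The step I expect to be the main obstacle is the explicit rectangle count of the first step: one must verify that the cyclic identifications of $G_p$ really cut the $2p$ generators and their rectangle-differentials into exactly two electric poles of length $h$ fused by a wire of length $p-2h$, and that the bi-grading bookkeeping --- in particular the global Maslov shift and the choice of $V_0$ versus $V_1$ on each wire arrow --- is self-consistent. Once this normal form is in hand, the remaining steps are formal manipulations with acyclic summands, together with the recognition of the staircase (and, on the second route, an appeal to the classification of thin complexes).
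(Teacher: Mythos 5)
Your plan is essentially the paper's proof: an explicit enumeration of the empty rectangles in the twisted grid $G_p$ showing that, in the spin$^c$ structure $\s_0\pm h$, the $2p$ generators and their differentials assemble into two electric poles of length $h$ fused by a wire of length $p-2h$, followed by the observation that shortening a pole (cancelling a two-generator acyclic piece, as in the change of basis $a'=a+b$) is a quasi-isomorphism, so $C[h,p-2h]\simeq C[0,p-2h]$. Two remarks. First, the step you flag as ``the main obstacle'' is precisely where the paper's proof lives: it proves a lemma that at most two rectangles emanating from any generator avoid the $\XX$-markings, splits the generators into those intertwined with the markings and those not, writes down the differential in closed form for each family, and handles four sporadic generators per spin$^c$ structure which provide the pole--wire attachments; your proposal correctly predicts the outcome but does not perform this verification, so as written it is a correct outline rather than a proof. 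Second, your endgame differs slightly from the paper's. You propose to recognise $C[0,w]\otimes\F[V_0^{\pm1},V_1^{\pm1}]$ directly as a stabilised staircase for $T_{2,w}$ (or, alternatively, to appeal to the classification of thin complexes); note that $C[0,w]$ has $2w$ generators while $\CFKinf(T_{2,w})$ has $w$, so the ``stabilisation'' is exactly the passage from the two-variable grid complex to the one-variable holomorphic one, and the paper avoids having to justify this by hand: it reads the $h=0$ computation backwards to identify $C[0,p-2h]\otimes\F[V_0^{\pm1},V_1^{\pm1}]$ with $GC^\infty(L(p-2h,1),\widetilde{T}_{2,p-2h},\s_0)$, then invokes Baker--Grigsby--Hedden to pass to $\CFKinf(L(p-2h,1),\widetilde{T}_{2,p-2h},\s_0)$ and Grigsby's isomorphism $L$ to land on $\CFKinf(T_{2,p-2h})$. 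Your alternative route via thinness and the spin$^c$-refined Alexander polynomial would need those inputs (thinness in every spin$^c$ structure, and the refined determinant computation) established independently, which in the paper is a consequence of, not a substitute for, the grid computation; so keep the Grigsby/BGH route as the actual closing step, as your first paragraph in fact suggests.
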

This means that the height of the poles is controlled by the ``distance''
from the spin structure $\s_0$; in the following proof we are going
to show how to shorten each pole without changing the homology, hence
proving that the complex is completely determined by the length of its
wires.
\begin{proof}
As before, let us denote by $G_p$ the grid for the lift of $T_{2,p}$
to $\lu$.  We first need to argue that each generator can have at most $2$ differentials
emanating from it.
\begin{lem}\label{lemma:massimodue}
 If $x$ denotes a generator in $S(G_p)$, then there are at most $2$ differentials emanating from $x$.
\end{lem}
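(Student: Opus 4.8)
The plan is to combine two structural features of the grid $G_p$: the underlying Heegaard diagram has only two $\alpha$-circles, $\alpha_0$ and $\alpha_1$ (equivalently, $G_p$ has two rows), and every generator $x\in S(G_p)$ therefore consists of exactly two points, one lying on $\alpha_0$ and one on $\alpha_1$. I would begin by recalling that any rectangle $r$ appearing in the sum defining $\partial^\infty x$ (see \eqref{eqn:differential}) has, among its four corners, its lower-left and its upper-right vertices equal to components of $x$, the remaining two corners being the components of the target generator $y$. Since $x$ has only two components and the two distinguished corners of $r$ are distinct, they must be precisely the two points of $x$. Hence the only freedom is a binary one: which of the two points of $x$ serves as the lower-left corner, and which as the upper-right.

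The second step is to check that, once this binary choice is made, the rectangle $r$ is uniquely determined. Here the two-row structure is crucial. The bottom edge and the top edge of $r$ lie on distinct $\alpha$-circles, so they lie on $\alpha_0$ and $\alpha_1$; consequently $r$ is confined to exactly one of the two horizontal strips of $G_p$, and its vertical extent is forced (stretching $r$ further in the vertical direction would make it wrap around and become an annulus rather than an embedded disk). With the strip fixed and the two opposite corners prescribed, the horizontal width of $r$ is likewise forced: the only competing width wraps entirely around the grid in the horizontal direction and again fails to bound an embedded disk. (If the two points of $x$ happen to lie on the same $\beta$-circle, no rectangle exists at all and there is nothing to prove.) Combining the two steps, at most two rectangles emanate from $x$, so $\partial^\infty x$ has at most two nonzero terms; the remaining conditions in \eqref{eqn:differential} — emptiness and $r\cap\XX=\emptyset$ — can only delete terms, so the bound is unaffected.

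The step I expect to require the most care is the uniqueness assertion of the second paragraph, specifically handling the boundary identifications of $G_p$: the horizontal gluing $(0,t)\sim(2p,t)$ is harmless, but the vertical gluing $(x,2)\sim(x-2,0)$ shears the rectangles that live in the upper strip, and one must verify on the explicit picture (Figure \ref{fig:griglia5-2}) that this shear does not secretly produce a second admissible rectangle with the same pair of opposite corners. I expect this to be a short, picture-driven case analysis; once it is in place, the ``at most two'' bound is immediate.
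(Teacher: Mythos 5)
Your argument is correct, and it reaches the bound by a genuinely different (in fact sharper) route than the paper. The paper first observes that an empty rectangle must have height or length one, concludes that at most four empty rectangles emanate from a generator, and then uses the $\XX$-markings (the dichotomy of Figure~\ref{fig:solodue}: if the ``horizontal'' rectangle misses $\XX$, the other one must hit it) to cut the count down to two. You instead show that at most two rectangles with the required corner structure exist at all: since the target of a rectangle in $Rect^\circ(x,y)$ is again a two-point generator, all four corners lie in $x\cup y$, so the lower-left and upper-right corners are forced to be exactly the two points of $x$; these lie on distinct $\alpha$-circles, so the rectangle is confined to one of the two horizontal strips, and with its opposite corners prescribed its width is unique in that annular strip. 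Hence the bound holds before the emptiness and $\XX$-conditions are imposed, and those can only delete terms. Two points to tighten: the paper's stated definition of $Rect^\circ(x,y)$ mentions only the lower-left corner, so you should make explicit why the upper-right corner also lies in $x$ (two generators sharing a component differ in a single point and cannot be joined by a rectangle, so both components of $x$ must appear among the corners, necessarily at the diagonal positions assigned to the initial generator); and your deferred check about the sheared top/bottom identification is indeed immediate --- the shift only relabels which grid point the endpoints of the top edge represent (which is why, unlike the untwisted grid, the two rectangles leaving $x$ land on two different target generators), and it cannot produce a second rectangle in a strip with the same pair of opposite corners. Also, the cleaner way to exclude vertical stretching is not the annulus remark: a height-two rectangle is embedded, but both its horizontal edges then lie on the same $\alpha$-circle, so its corners cannot be the two points of $x$. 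What your route gives up is only the byproduct of the paper's argument, namely the explicit bookkeeping of which of the two rectangles meets $\XX$ and which $\OO$'s it covers, which the paper reuses in the computation of the differentials in the proof of Proposition~\ref{prop:qisom}.
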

The proof of the Lemma is deferred to the end of this subsection.  We
will subdivide the generators of $S(G_p)$ in different families, and
give a general formula for the differential on each.  First of all
note that, since markings of either kind are grouped together, they
act as ``walls'' for rectangles: if the two components of a generator
$x$ are intertwined with the $\XX$ markings, then every empty
rectangle starting from $x$ must intersect an $\OO$ marking, resulting
in the multiplication by a $V_i$ variable. On the other hand, if the
two components are on the same side with respect to some type of
marking, the differential of $x$ will be composed by elements with the
same property (and there is not going to be any multiplication by
$V_i$). This will not hold for $4$ sporadic cases, which will provide
the ``attachment'' between wires and poles.

Let $A_p$ denote the set of generators in $S(G_p)$ whose components do
not intertwine with the markings, and $B_p$ the remaining ones.  Apart
from $4$ special cases treated separately, we will prove that each generator $x \in
A_p$ has differential of the form $\partial(x) = x^\prime + x^{\prime
  \prime}$, for distinct $x^\prime , x^{\prime \prime} \in A_p$, while
if $x \in B_p$, then either $\partial(x) = V_i (x^\prime + x^{\prime
  \prime})$ or $\partial(x) = V_i x^\prime + V_{i+1}x^{\prime \prime}$
for some $i$ (indices modulo $2$). In the remaining sporadic cases, we
will also explicitly determine the differential.

Note that we know the number of generators in each $\spinc$ structure,
and we will determine explicitly the differential; since we are
considering the graded version of $GC^\infty$, and for a fixed
Alexander degree the graph induced by the differential is connected,
we only need to determine the degree of one element in order to
determine the whole complex $GC^\infty(\lu,\widetilde{T}_{2,p} ,\s)$.

\begin{figure}
\includegraphics[width=8cm]{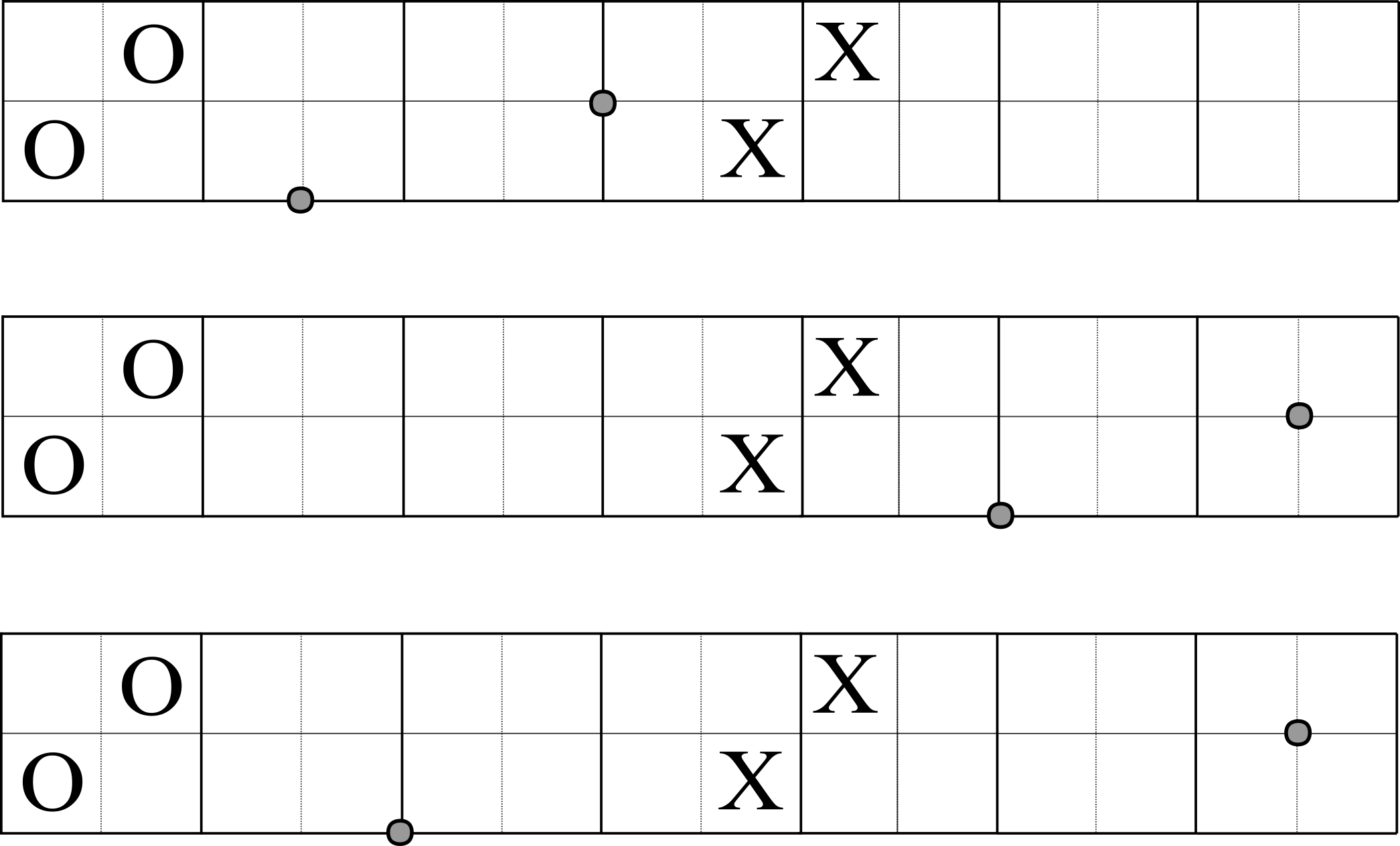}
\caption{In the top, middle and bottom part respectively we have two instances of case $A$ and one of $B$. The two gray dots indicate the components of the generator.}
\label{fig:casidim}
\end{figure}
In all cases it is easy to find two generators connected to $x$ by
empty rectangles not intersecting the $\XX$ markings. By Lemma
\ref{lemma:massimodue} we know that these are the only ones.

$\partial (x_{a,b}) = 
\begin{cases}
y_{b,a}+ y_{a-1,b+1} &  \mbox{if } x \in A_p, a\le b\\ 
y_{a,b}+ y_{b-1,a+1} &  \mbox{if } x \in A_p, b\le a\\
V_0 y_{a,b}+ V_1 y_{b-1,a+1} &  \mbox{if } x \in B_p, a\le b\\
V_0  y_{b,a}+ V_1 y_{a-1,b+1} &  \mbox{if } x \in B_p, b\le a\\
\end{cases}\\$

$\partial (y_{a,b}) = 
\begin{cases}
x_{b,a}+ x_{a,b} &  \mbox{if } y \in A_p, a\le b\\ 
x_{a+1,b-1}+ x_{b-1,a+1} &  \mbox{if } y \in A_p, b\le a\\
V_1 (x_{a-1,b+1}+  x_{b-1,a+1}) &  \mbox{if } y \in B_p, a\le b\\
V_0 ( x_{a,b}+ x_{b,a}) &  \mbox{if } y \in B_p, b\le a\\
\end{cases}$\\

For each $\spinc$ structure $\s_0 \pm h$, with $h \neq 0$, we can find
$4$ generators with a slightly different behaviour. These are the
generators such that one component is positioned on the lower left
vertex of a square containing an $\XX$ or an $\OO$ marking. We can write
them as $y_{\frac{p-1}{2}, *},y_{*^\prime,\frac{p+1}{2}}$ in the case
of $\XX$ markings, and $x_{0,*}, x_{*^\prime , 0}$ for the $\OO$'s. The
values of $*$ and $*^\prime$ are uniquely determined by $h$.  Their
differentials are:\\

$\partial(z) = 
\begin{cases}
0 &  \mbox{if } z = y_{\frac{p-1}{2},*}\\
0 &  \mbox{if } z = y_{*,\frac{p-1}{2} + 1}\\
V_0 y_{*,0} + V_1 y_{p-1,*+1} &  \mbox{if } z = x_{0,*} \mbox{ or } z = x_{*,0}\\
\end{cases}$\\

Putting all differentials together, we see that, in a fixed Alexander degree the complex $GC^\infty(\lu, \widetilde{T}_{2,p},\s_0 \pm h)$ is indeed composed by two electric poles of height $h$, attached to a wire of length $p-2h$, as shown in Figure \ref{fig:complessogenerale}.

It follows that $\CFKinfty (L(p,1),\widetilde{T}_{2,p}, \s_0 \pm h)$ is isomorphic to the free $\F [U,U^{-1}]$-module generated by $C[h,p -2h]$.

Now we just need to prove that $C[h,p -2h]$ has the same homology as that of $\CFKinfty (T_{2,p-2h})$; this can be done by defining at the grid
level the quasi-isomorphisms $F_{p,h}$.

However, it is immediate to note that the change of basis shown in Figure \ref{fig:cambiobase} induces a quasi-isomorphism between $C[h,p -2h]$ and $C[h-1,p -2h]$.

\begin{figure}
\includegraphics[width=5cm]{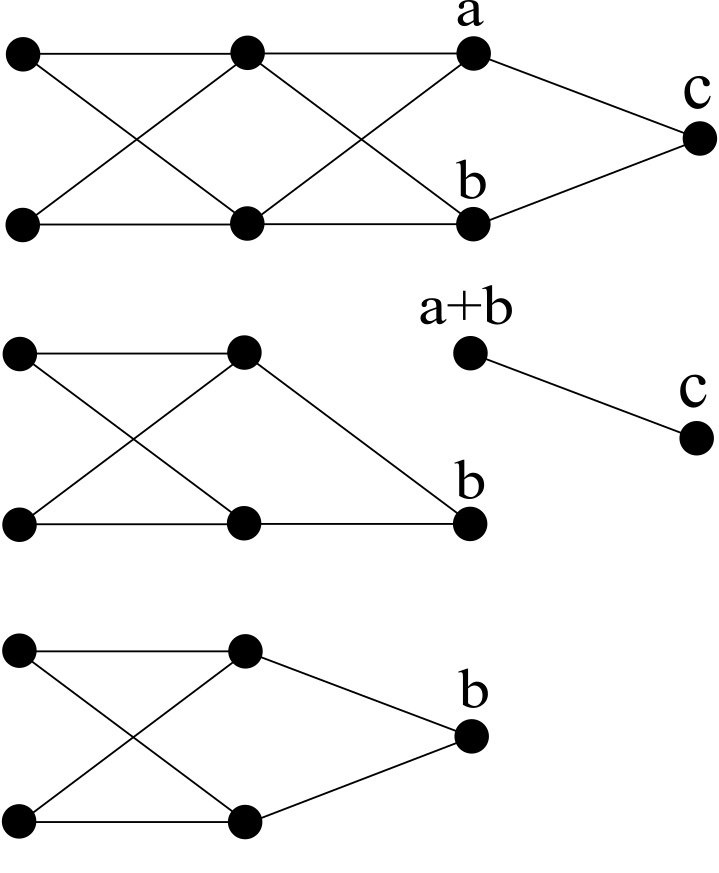}
\caption{A change of basis: $a^\prime = a+b$, $b^\prime = b$. Cancelling the acyclic pair $(c,a+b)$ lowers the height of an electric pole by $1$.}
\label{fig:cambiobase}
\end{figure}

In other words, by iterating this process $h$ times, we have proved that
\begin{equation}\label{eqn:complessiuguali}
C[e,w] \simeq C[0,w].
\end{equation}

Similar quasi-isomorphisms where used in \cite{quasiconc}, dealing with a different family of knots in $L(p,1)$.
The idea now is that by composing $h$ of these maps, we obtain a chain of quasi-isomorphisms from the complex in the $\spinc$ structure $\s_0 + h$ to the complex in the spin structure $\s_0$ in $L(p-2h,1)$.
Summing all up, we have proved the existence of the following chain of quasi-isomorphisms:
\begin{equation*}
\begin{aligned}
GC^\infty (L(p,1),\widetilde{T}_{2,p}, \s_0 \pm h) \simeq C[h,p -2h] \otimes \Z_2[V_0,V_1] \simeq C[0,p -2h] \otimes \Z_2[V_0,V_1] \simeq \\ 
\simeq GC^\infty (L(p-2h,1),\widetilde{T}_{2,p-2h}, \s_0) \simeq \CFKinfty (L(p-2h,1),\widetilde{T}_{2,p-2h}, \s_0) \simeq \CFKinfty (T_{2,p-2h}),
\end{aligned}
\end{equation*}
where 
\begin{itemize}
\item the first quasi-isomorphism is given by explicitly determining the differential of the first complex;
\item the second one is given by shortening the electric poles with the $h$ applications of the basis change in Figure~\ref{fig:cambiobase};
\item the third is just the inverse of the first one;
\item the fourth one is \cite[Theorem 1.1]{baker2008grid};
\item the last quasi-isomorphism is Grigsby's $L$.
\end{itemize}
\end{proof}
\begin{figure}[h!]
\includegraphics[width=10cm]{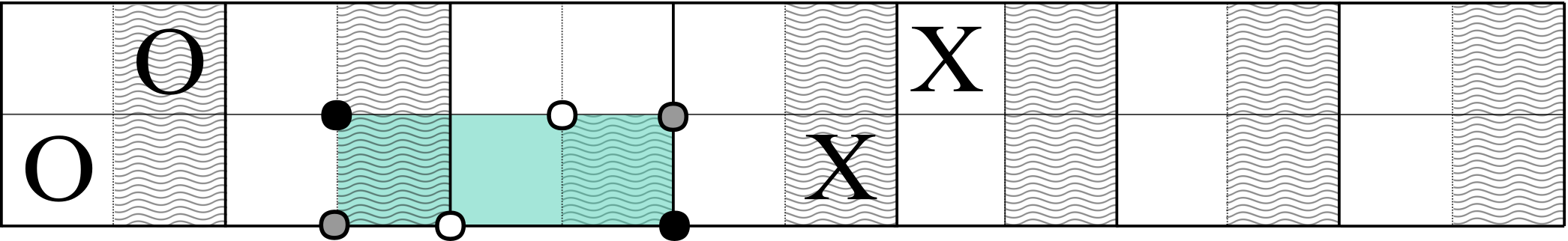}
\caption{The rectangles with wavy and filled patter start from the lower component of the gray generator and connect it to the white and black ones respectively. Only the first one has trivial intersection with the $\XX$ markings.}
\label{fig:solodue}
\end{figure}
\begin{proof}[Proof of Lemma \ref{lemma:massimodue}]
It is easy to argue that for a rectangle in $G_p$ to be empty, it must
necessarily have either height or length equal to $1$ (recall that the
top\slash bottom identification is a shift by $2$). So there can be at
most $4$ rectangles
starting from a generator $x \in
S(G_p)$.  We want to show that only two of them are disjoint from the
$\XX$ markings.

Consider the two rectangles starting from a generator, as in Figure \ref{fig:solodue}. If we assume that the \textquotedblleft horizontal\textquotedblright rectangle does not intersect the $\XX$ markings, then necessarily the other one will, and vice versa.
\end{proof}

\subsection{Consequences}
In the case of alternating knots, invariants coming from Knot Floer homology usually do not yield any interesting information.  The following result, despite being well-known \cite{litherland1979signatures}, points out an interesting way of showing that $HFK$ can be used to extract useful information from alternating knots as well.

\begin{thm}\label{thm:independencetorusalt}
The alternating torus knots $T_{2,p}$ with $p$ an odd prime are linearly independent in the concordance group.
\end{thm}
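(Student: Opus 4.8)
The plan is to combine the computation of Corollary~\ref{cor:tautorici} with the slice obstruction of Theorem~\ref{theorem3} applied to the double branched cover ($m=2$). Suppose, for contradiction, that $\sum_{i=1}^m a_i\,[T_{2,p_i}]=0$ in $\mathcal{C}$ for distinct odd primes $p_1,\dots,p_m$ and integers $a_i$ not all zero; write $a_i=\epsilon_i|a_i|$ with $\epsilon_i\in\{+1,-1\}$, and let $J$ be the connected sum of $|a_i|$ copies of $\epsilon_iT_{2,p_i}$, taken over $i=1,\dots,m$ (here $-T_{2,p}$ is the mirror of $T_{2,p}$, whose double branched cover is $-L(p,1)$). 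Then $J$ is slice. Two elementary inputs come first. Since the double branched cover respects connected sums, $\Sigma^2(J)$ is a connected sum of copies of the lens spaces $\pm L(p_i,1)$, so $H^2(\Sigma^2(J);\Z)\cong\bigoplus_{i=1}^m(\Z/p_i)^{|a_i|}$; and $\det_2(J)=|\Delta_J(-1)|=\det(J)=\prod_i p_i^{|a_i|}$. Since $J$ is slice, $\Sigma^2(J)$ bounds a rational homology ball, so $\det(J)$ is a perfect square, and as the $p_i$ are distinct this forces each $|a_i|=2b_i$ to be even.

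Next apply Theorem~\ref{theorem3}: there is a subgroup $G<H^2(\Sigma^2(J);\Z)$ with $|G|=\sqrt{\det_2(J)}=\prod_i p_i^{b_i}$ such that $(\Sigma^2(J),\widetilde{J},\s_0+\xi)$ is $\spinc$ rationally slice, hence $\Upsilon_{J,\xi}(t)=0$ for all $t\in[0,2]$ and all $\xi\in G$. The ambient group $\bigoplus_i(\Z/p_i)^{2b_i}$ is the direct sum of its Sylow subgroups for the distinct primes $p_i$, so $G=\bigoplus_i G_{p_i}$ with $G_{p_i}=G\cap(\Z/p_i)^{2b_i}$; comparing $|G|=\prod_i|G_{p_i}|$ with $\prod_i p_i^{b_i}$ and using unique factorization forces $|G_{p_i}|=p_i^{b_i}$. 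In particular, if $b_k\geq1$ for some $k$, then $G_{p_k}$ is a nonzero $\mathbb{F}_{p_k}$-subspace of $(\Z/p_k)^{2b_k}$, and we may pick a nonzero $\xi=(\xi_1,\dots,\xi_{2b_k})\in G_{p_k}$, regarded inside $H^2(\Sigma^2(J);\Z)$ as having every block other than the $k$-th equal to zero.

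The heart of the argument is then a short computation. By additivity of the twisted Upsilon invariants under connected sum (\cite[Section~7]{OS7} together with Proposition~\ref{sums}, iterated), the invariants $\Upsilon_{J,\xi}(t)$ and $\Upsilon_{J,0}(t)$ differ only in the $2b_k$ summands coming from the $k$-th block, since the remaining summands carry the trivial twist in both cases; hence
\[ 0=\Upsilon_{J,\xi}(t)-\Upsilon_{J,0}(t)=\sum_{l=1}^{2b_k}\Bigl(\Upsilon_{\epsilon_k T_{2,p_k},\,\s_0+\xi_l}(t)-\Upsilon_{\epsilon_k T_{2,p_k},\,\s_0}(t)\Bigr). \]
By Corollary~\ref{cor:tautorici} (whose formula depends only on $|h|$), together with $\Upsilon_{-(Y,K,\s)}(t)=-\Upsilon_{(Y,K,\s)}(t)$ and $\overline{\s_0+h}=\s_0-h$ for the mirror case, one has $\Upsilon_{\epsilon_kT_{2,p_k},\,\s_0+h}(t)=\epsilon_k\,(|h|-n_k)\,(1-|t-1|)$ with $n_k=\tfrac{p_k-1}{2}$, where $|h|$ denotes the absolute value of the representative of $h\in\Z/p_k$ in $\{-n_k,\dots,n_k\}$. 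Substituting, the displayed identity becomes $0=\epsilon_k\,(1-|t-1|)\sum_{l=1}^{2b_k}|\xi_l|$; evaluating at $t=1$ gives $\sum_l|\xi_l|=0$, so $\xi_l=0$ in $\Z/p_k$ for every $l$, contradicting $\xi\neq0$. Hence every $b_k=0$, so all $a_i=0$, which is the asserted independence.

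The points I would verify carefully, though I do not expect any of them to be a serious obstacle, are: (i) the identification of the canonical $\spinc$ (in fact $\Spin$) structure of $\Sigma^2(-T_{2,p})\cong -L(p,1)$ with conjugation — trivial here since $p$ is odd and the spin structure is unique, but it is where the factor $\epsilon_k$ above comes from; (ii) the naturality of the canonical spin structure and of the twisting classes under connected sums, which underpins the additivity step; and (iii) the Sylow decomposition of $G$ pinning down $|G_{p_i}|=p_i^{b_i}$. Conceptually everything reduces to the single fact that $h\mapsto\Upsilon_{T_{2,p},\s_0+h}$ is strictly monotone in $|h|$ on $\{0,\dots,n\}$, so that within a fixed branched cover no nontrivial cancellation among twisted Upsilon invariants is possible; this is exactly what obstructs the existence of a half-size subgroup $G$ on which they all vanish once $b_k\geq1$.
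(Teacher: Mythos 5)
Your proof is correct and follows essentially the same route as the paper's: sliceness gives a half-order subgroup of twisting classes on which the twisted invariants vanish, additivity under connected sum together with the computation $\Upsilon_{T_{2,p},\s_0+h}(t)=(|h|-n)(1-|t-1|)$ (the paper phrases this via the twisted $\tau$'s, noting in a footnote that Upsilon carries the same information here), and primality to localize a nonzero element of that subgroup to a single summand, where strict monotonicity in $|h|$ yields the contradiction. Your Sylow-decomposition bookkeeping is just a more explicit version of the paper's step of producing a nonzero metabolizer element of the form $(\xi_+,0)$.
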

\begin{proof}
Let $K$ be of the form $K_+ \#K_-$ with $K_+=\#_i a_i T_{2,2n_i+1}$
and $K_-=- \#_j b_j T_{2, 2m_j+1}$, $a_i, b_j>0$, $n_i, m_j \geq 1$
prime and all distinct. Set $G=H_1(\Sigma(K), \Z)$,
$G_+=H_1(\Sigma(K_+), \Z)=\bigoplus_i (\Z/{(2n_i+1)}\Z)^{a_i}$, and
$G_-=H_1(\Sigma(K_-), \Z)=\bigoplus_j (\Z/{(2m_j+1)}\Z)^{b_j}$.

Assume that $K \sim \bigcirc$, then $\tau_{\s_0+ \xi}(K)=0$ for all
$\xi\in M$ for some $M\subset G$ with $|M|= \sqrt{|G|}$. Write
$\xi=(\xi_+, \xi_-)$, with $\xi_{\pm} \in G_\pm$. Because of the
additivity of the $\tau$ invariant,
one has
\begin{align*}
\tau_{\s_0+ \xi}(K)&= \tau_{\s_0+ \xi_+}\left(\#_i a_i T_{2,2n_i+1}\right) - \tau_{\s_0+ \xi_-}\left(\#_j b_j T_{2,2m_j+1} \right) \\
&= \sum_i a_i \tau_{\s_0+ \xi_i^+}\left( T_{2,2n_i+1}\right) 
- \sum_j b_j \tau_{\s_0+ \xi_j^-}\left( T_{2,2m_j+1} \right)\\
&= \sum_i a_i \tau\left( T_{2,2n_i+1-2|\xi_i^+|}\right) 
- \sum_j b_j \tau\left( T_{2,2m_j+1-2|\xi_j^-|} \right)\\
&= \sum_i a_i (n_i-|\xi_i^+|) - \sum_j b_j (m_j-|\xi_j^-|)\\
&= \left(\sum_i a_i n_i - \sum_j b_j m_j \right)
- \left( \sum_i a_i |\xi_i^+|  - \sum_j b_j |\xi_j^-| \right)
\end{align*}
Thus, if $K$ is slice, the quantity
\begin{equation}\label{eq:tauH}
H(\xi)=\sum_i a_i |\xi_i^+|  - \sum_j b_j |\xi_j^-|  \   \ \ \ \ \ \ \ \ \xi \in M \ 
\end{equation}
should be constant. In fact, since by Theorem~\ref{thm:interi} for $\xi=0$ one has $H(0)=
\tau(K)=\sum_i a_i n_i - \sum_j b_j m_j =0$, for $\xi\in M$ we have
$H(\xi)\equiv 0$. On the other hand, because of the assumption on the
coefficients of the torus knots being primes, one can find non-zero
$\xi\in M$ of the form $\xi=(\xi_+, 0)$. However, for such a $\xi$ one
has $H(\xi)>0$, and we reach a contradiction.
\end{proof}

\begin{rem}
The argument given above (and indeed, the statement of
Theorem~\ref{thm:independencetorusalt}) is not optimal. The same argument works if in a family $T_{2,2n_i+1}$ all $2n_i+1$ are powers of different
primes. The fact that all
alternating torus knots are linearly independent, however, requires
a more subtle group theoretic result, which we do not pursue here.
\end{rem}
%
%
%

\begin{rem}
Using a result by Raoux~\cite{raoux2016tau}, we can determine the
slice genus of the lifts $\widetilde{T}_{2,p}$. Here by slice genus
$g_*(K)$ of a knot $K \subset \lu$, we mean the minimal genus of a
smooth and properly embedded surface $\Sigma$ in $(W_p, \lu)$, and
such that $\partial \Sigma = K$, where $W_p$ is the Euler number $p$ disk
bundle over $S^2$.  

If $K \subset S^3$ bounds a smooth slice surface
$S$ of genus $g$, then its lift $\widetilde{K} \subset \Sigma(K)$ will
bound a genus $g$ surface in the double branched cover of $D^4$ over
$S$, obtained by lifting $S$. Hence $g_*(\widetilde{T}_{2,p}) \le
n=\frac{1}{2}(p-1)$. On the other hand, since $\tau^\s(K) \le g_*(K)$ for every
$\spinc$ structure that extends to $W_p$, as does $\s_0$, using
\cite[Corollary~5.4]{raoux2016tau} we have $g_* (\widetilde{T}_{2,p})
= n$. 
\end{rem}

\begin{rem}
By \cite[Proposition~25]{quasiconc}, the knots
$\widetilde{T}_{2,p}$ are not concordant to local knots. Also, by
\cite[Theorem~4]{quasiconc} the genus of a $PL$ surface cobounding
$\widetilde{T}_{2,p}$ and $\bigcirc$ in $L(p,1) \times I$ is at least
$n$. Here, by $PL$ surface, we mean a surface which is smooth
everywhere, except for a finite number of singular points, which are
cones over knots in $S^3$.
\end{rem}

\section{Further examples with (1,1)-knots} \label{sec:TorusAgain}

In this section we perform some further computations for invariants
in double branched covers for knots given by 
 genus one doubly pointed Heegaard diagrams --- such knots are known
as $(1,1)$-knots \cite{Ras1}. Based on these computations we give an
alternative proof of Theorem~\ref{thm:independencetorusalt}, and prove
an independence result about twist knots.

\subsection{$(1,1)$-diagrams of alternating torus knots}
Note first that the alternating torus knot $T_{2,2n+1}$ can be given
by the toroidal doubly pointed Heegaard diagram in the top-left of
Figure~\ref{fig:t22np1Comp}. The generators of the chain complex of
$\CFKinf (K)$ for a $(1,1)$-knot are easy to determine: these are the
intersection points of the unique $\alpha$- and the unique
$\beta$-curve. The boundary map is defined by counting holomorphic
maps from the unit disk ${\mathbb {D}}$ to the Heegaard torus $T^2$,
with the usual (Floer theoretic) boundary conditions. The map
${\mathbb {D}}\to T^2$ is not necessarily injective, but once we pass
to the universal cover ${\mathbb {C}}$ of $T^2$ (and lift one of the
intersection points to one of its preimages), the boundary map can be
determined by identifying embedded bigons in the universal cover.  As
we will see, in some cases it is sufficient to consider one or two
fundamental domains, making the computation much simpler.

From Figure~\ref{fig:t22np1Comp} one can easily find the chain complex
$\CFKinfty(T_{2,2n+1})$, shown in the lower part of
Figure~\ref{fig:t22np1Comp}.
\begin{figure}
\includegraphics[width=12cm]{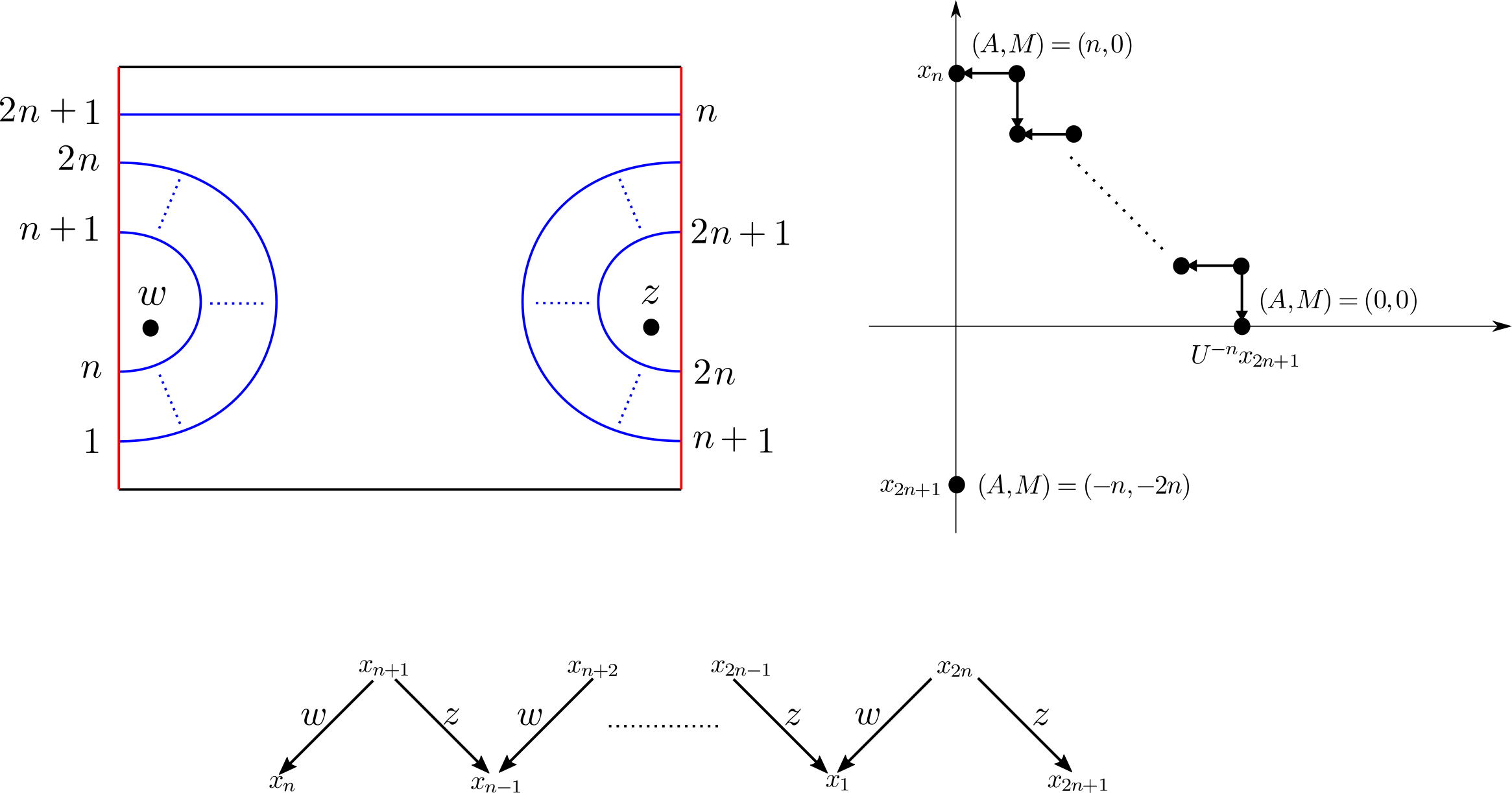}
\caption{On the left, the $(1,1)$-diagram of the torus knot
  $T_{2,2n+1}$ is given; we get the toroidal diagram after identifying
the top and the bottom edges, as well as the right and left edges, in the latter identfication we apply a shearing, so that points with equal label match.
On the right we show the chain complex (all
dots and arrows should be shifted by $(k,k)$ ($k\in \Z$) for the complete
picture), and in the lower part the boundary map is shown. As usual,
the intersection point $i$ corresponds to the generator $x_i$.
(The letter $w$ or $z$ over an arrow indicates that the corresponding
bigon contains the basepoint shown.)  
Note that in the notation of the picture we have
  $HFK^-(T_{2,2n+1})\simeq \F[U] \oplus \F^n$, where the first summand is
  generated by $x_{2n+1}$, and the latter by $x_1, \ldots, x_{n}$.}
\label{fig:t22np1Comp}
\end{figure}
The generator corresponding to the intersection point $i\in \{1,
\ldots , 2n+1\}$ is denoted by $x_i$ for $i=1, \ldots ,{2n+1}$, and
the boundary maps are given by the bigons visible on the picture.
(There are no more non-trivial components of the boundary map, since the
Maslov index one domains connecting any further pairs contain regions
of negative multiplicity.)

The double branched cover $\Sigma (T_{2,2n+1})$ is a 3-manifold
diffeomorphic to the lens space $L(2n+1, 1)$. The lift of the branch
curve $T_{2,2n+1}\subset S^3$ provides a null-homologous knot
${\widetilde {T}}_{2,2n+1}\subset \Sigma (T_{2,2n+1})$.  The pullback
of $\alpha , \beta \subset T^2$ to the double branched cover will be
denoted by $\alpha _1, \alpha _2$ and $\beta _1, \beta _2$.  In this
way we get a genus-2 Heegaard diagram ${\mathcal {H}}=(\Sigma _2, \{
\alpha _1, \alpha _2\}, \{ \beta _1, \beta _2\})$ for $\Sigma
(T_{2,2n+1})$, as $\Sigma _2$ is the double branched cover of the
torus branched in $z$ and $w$.  The intersection points ${\mathbb
  {T}}_{\alpha }\cap {\mathbb {T}}_{\beta}$ can be easily described as
follows.  Let $a_i, b_i$ denote the two points over $x_i$, and assume
that $a_i\in \alpha _1$ (and so $b_i\in \alpha _2$).  We also assume
that $a_1\in \alpha _1\cap \beta _1$ (and hence $b_1\in \alpha _2\cap
\beta _2$). There are two types of points $x_i$: we say that $x_i$ is
\emph{homogeneous} if $a_i\in \beta _1$ (so by our choice $x_1$ is
homogeneous) and \emph{inhomogeneous} if $a_i\in \beta _2$.

Since the bigons in the
fundamental domain appearing in the boundary maps all lift to
rectangles (all these bigons contain a unique basepoint), they
connect homogeneous points with inhomogeneous ones, hence we conclude
that $x_1, \ldots, x_n$ and $x_{2n+1}$ are homogeneous and $x_{n+1},
\ldots, x_{2n}$ are inhomogeneous. 

This shows that the pull-back
diagram in the double branched cover has $(n+1)^2+n^2$ generators in
the Heegaard Floer chain complex: these are $\{ (a_i, b_j)\}$ where
either both $x_i,x_j$ are homogeneous or both inhomogeneous. It is easy
to see that $(a_i,b_i)$ and $(a_j, b_j)$ are in the same spin$^c$
structure (since the bigons connecting the various $x_i's$ lift to
rectangles connecting the various $(a_i,b_i)$'s). Indeed

\begin{lem}\label{lem:SpinCForTorus}
The generators $(a_i,b_i)$ represent the spin structure $\s _0$ of $\Sigma
(T_{2,2n+1})$. The pairs $(a_i, b_j)$ and $(a_k, b_{\ell})$
represent the same spin$^c$ structure if and only $i-j=k-\ell$.
\end{lem}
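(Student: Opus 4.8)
The plan is to read off $\spinc$ structures from the pulled-back genus-two diagram $\mathcal H$ by means of the standard formula: for two generators $\mathbf{x},\mathbf{y}$ of $\mathcal H$ one has $\s(\mathbf{x})-\s(\mathbf{y})=\mathrm{PD}[\epsilon(\mathbf{x},\mathbf{y})]$ inside $H^2(\Sigma(T_{2,2n+1});\Z)\cong H_1(\Sigma(T_{2,2n+1});\Z)\cong\Z/(2n+1)$, where $\epsilon(\mathbf{x},\mathbf{y})$ is the one-cycle obtained by joining $\mathbf{x}$ to $\mathbf{y}$ along the $\alpha$-circles and back along the $\beta$-circles; in particular $\s(\mathbf{x})=\s(\mathbf{y})$ precisely when $\epsilon(\mathbf{x},\mathbf{y})$ is null-homologous in $\Sigma(T_{2,2n+1})$.

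For the diagonal generators, as observed above each Maslov-index-one bigon of the torus diagram occurring in the boundary map carries a single basepoint and therefore lifts to an embedded rectangle in $\Sigma_2$; inspecting its corners one sees it is a domain joining $(a_i,b_i)$ to $(a_{i'},b_{i'})$ when the bigon runs from $x_i$ to $x_{i'}$. Since these bigons make $x_1,\dots,x_{2n+1}$ into a connected graph, all the $(a_i,b_i)$ represent one common $\spinc$ structure $\mathfrak{s}$. To see that $\mathfrak{s}=\s_0$: the covering involution $\tau$ of $\Sigma(T_{2,2n+1})\to S^3$ is realized on $\mathcal H$ by $\alpha_1\leftrightarrow\alpha_2$, $\beta_1\leftrightarrow\beta_2$, $a_i\leftrightarrow b_i$, so it fixes each $(a_i,b_i)$ and hence fixes $\mathfrak s$; and since $\tau$ acts as $-\mathrm{id}$ on $H_1(\Sigma(T_{2,2n+1});\Z)$ (a classical property of double branched covers), it acts on $\Spinc(\Sigma(T_{2,2n+1}))$ as an affine involution covering $-\mathrm{id}$ on $H^2$, which — $|H_1|$ being odd — has a unique fixed point; as $\s_0$ (the only spin structure) is fixed by the orientation-preserving $\tau$, we conclude $\mathfrak s=\s_0$.

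For the general criterion I would compute $\epsilon\bigl((a_i,b_j),(a_k,b_\ell)\bigr)$ directly. Splitting the $\T_\alpha$- and $\T_\beta$-paths into their $\alpha_1/\alpha_2$ halves, this cycle becomes a sum of a contribution from sliding $a_i$ to $a_k$ (a loop supported on $\alpha_1$ and one $\beta$-circle) and one from sliding $b_j$ to $b_\ell$ (supported on $\alpha_2$ and a $\beta$-circle). In $\Sigma(T_{2,2n+1})$ every $\alpha$- and $\beta$-circle is null-homologous, so each contribution is a well-defined class; concatenating elementary steps shows each one equals an integer times the generator $g$ of $\Z/(2n+1)$ represented by a single $\alpha$-step against the corresponding $\beta$-arc. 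Tracking the integers through the correspondence between generators and cyclic positions of points along $\alpha_1,\alpha_2$ — inherited from the torus diagram — one gets $\epsilon\bigl((a_i,b_j),(a_k,b_\ell)\bigr)=\bigl((i-j)-(k-\ell)\bigr)\cdot g'$ for a generator $g'$ of $\Z/(2n+1)$, the two halves contributing with opposite sign by the $\tau$-symmetry. Since $g'$ generates, this vanishes exactly when $i-j\equiv k-\ell\pmod{2n+1}$, i.e. — identifying the index $2n+1$ with $0$ — exactly when $i-j=k-\ell$, which is the lemma.

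The main obstacle is the bookkeeping in this last step: controlling the cyclic orders of the lifted curves $\alpha_1,\alpha_2,\beta_1,\beta_2$ and the signs in the telescoping sum, so that the outcome is genuinely a generator times $(i-j)-(k-\ell)$ rather than some other $\Z$-linear combination of the four indices. One could shortcut much of this by invoking instead the $\spinc$-labelling recipe of Baker--Grigsby--Hedden \cite{baker2008grid} — of the shape $\s(x)\equiv a+b$ in grid coordinates — after matching the present $(1,1)$-diagram with the associated twisted grid diagram; the diagonal-and-involution argument above then suffices to pin down which label corresponds to $\s_0$.
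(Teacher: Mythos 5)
Your proposal is correct and follows essentially the same route as the paper: the lifted bigons show all diagonal generators $(a_i,b_i)$ share one spin$^c$ structure, a symmetry argument (the paper's ``conjugation invariance'', which you spell out via the covering involution fixing these generators and acting as $-\mathrm{id}$ on $H_1$, with a unique fixed spin$^c$ structure since $|H_1|$ is odd) identifies it with $\s_0$, and the difference class of $(a_i,b_j)$ against the diagonal is $(i-j)$ times a generator of $H_1$. The $\epsilon$-cycle bookkeeping you flag as the main obstacle is precisely the step the paper also asserts without further detail (``twisting $\s_0$ by $(j-i)$ times a generator''), so there is no essential divergence.
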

\begin{proof}
The elements $(a_i, b_i)$ all represent the same spin$^c$ structure
since they can be connected by domains (the pull-backs of the bigons
from downstairs), and since this spin$^c$ structure is conjugation
invariant, it must be the unique spin structure $\s _0$.  Now we can 
compare the spin$^c$ structure of 
$(a_i, b_j)$ to that of $(a_i, b_i)$: the difference will be represented
by the lift of the 1-homology in the torus we get by connecting $x_i$ to $x_j$ on the $\alpha$-curve and $x_j$ to $x_i$ on the $\beta $-curve.
This implies that the spin$^c$ structure
of $(a_i, b_j)$ is given by twisting $\s _0$ with $(j-i)$-times a
generator of $H_1$ of the double branched cover, which concludes the
proof.
\end{proof}

Recall that the pairs come with Alexander and Maslov gradings, and 
indeed the Alexander gradings are relatively easy to compute in 
terms of the Alexander gradings of the $x_i$'s.

\begin{lem}[Levine, \cite{levine2008computing}]
\label{lem:LevinsResult}
The Alexander grading $A(a_i,b_j)$ 
of $(a_i,b_j)$ is equal to $\frac{1}{2}(A(x_i)+A(x_j))$.
In particular, $A(a_n, b_n)=n$, $A(a_{2n+1}, b_{2n+1})=-n$ and 
for all further generator $\vert A(a_i,b_j)\vert <n$. \qed
\end{lem}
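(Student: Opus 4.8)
This is Levine's computation \cite{levine2008computing}; the plan is to extract the Alexander grading from the relative $\Spinc$ structure of a generator paired with a lifted Seifert surface, using that the genus-two diagram $\mathcal{H}=(\Sigma_2,\{\alpha_1,\alpha_2\},\{\beta_1,\beta_2\})$ for $(\Sigma(T_{2,2n+1}),\widetilde{T}_{2,2n+1})$ is the double cover of the genus-one doubly pointed diagram $(T^2,\alpha,\beta,w,z)$ for $(S^3,T_{2,2n+1})$ branched over $w,z$. Write $\pi$ for this branched covering and $\tau$ for the covering involution, which swaps $\alpha_1\leftrightarrow\alpha_2$, $\beta_1\leftrightarrow\beta_2$ and $a_i\leftrightarrow b_i$; since the basepoints $w,z$ lie on the branch locus $T_{2,2n+1}\cap T^2$, each of them has a single preimage $\widetilde{w},\widetilde{z}$ in $\Sigma_2$, while a Seifert surface $F$ for $T_{2,2n+1}$ has exactly two lifts $\widetilde{F}_1$ and $\widetilde{F}_2=\tau\widetilde{F}_1$, both bounding $\widetilde{T}_{2,2n+1}$.

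First I would recall the standard identification of the Alexander grading of a generator with (half of) the evaluation of $c_1$ of its relative $\Spinc$ structure $\underline{\mathfrak{s}}(\x)$ against the class of a Seifert surface, together with the relative version $A(\x)-A(\y)=n_z(\phi)-n_w(\phi)$ for $\phi\in\pi_2(\x,\y)$. Applied to the \emph{diagonal} generator $(a_i,b_i)=\pi^{-1}(x_i)$ this is immediate: $\underline{\mathfrak{s}}(a_i,b_i)=\pi^{*}\underline{\mathfrak{s}}(x_i)$, and since $\Sigma(T_{2,2n+1})$ is a rational homology sphere one has $[\widetilde{F}_1]=[\widetilde{F}_2]$ in the (infinite cyclic) relative second homology of the pair, with $\widetilde{F}_1\cup\widetilde{F}_2=\pi^{-1}(F)$; pairing $c_1(\pi^{*}\underline{\mathfrak{s}}(x_i))$ against $[\widetilde{F}_1]+[\widetilde{F}_2]$ therefore pushes down to twice the downstairs pairing, giving $A(a_i,b_i)=A(x_i)$. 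With the indexing of Figure~\ref{fig:t22np1Comp} this already proves the ``in particular'' clause, the diagonal generators being precisely those of $\widehat{HFK}(\Sigma(T_{2,2n+1}),\widetilde{T}_{2,2n+1},\s_0)\cong\widehat{HFK}(T_{2,2n+1})$ (Grigsby's isomorphism).

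For a general $(a_i,b_j)$ the plan is the same, the one new input being the pairing of $c_1(\underline{\mathfrak{s}}(a_i,b_j))$ with the $\tau$-invariant class $[\widetilde{F}_1]+[\widetilde{F}_2]=[\pi^{-1}(F)]$. Since the relative $\Spinc$ structure of a genus-two generator is assembled from paths joining its two components to the basepoints, and the component on $\alpha_1$ (resp.\ $\alpha_2$) covers $x_i$ (resp.\ $x_j$) while $w,z$ lift uniquely, these paths push down to paths joining $x_i$ and $x_j$ to $w,z$, so the intersection number with $\pi^{-1}(F)$ splits as the sum of the two downstairs numbers, i.e.\ equals $2A(x_i)+2A(x_j)$; dividing by $[\widetilde{F}_1]=[\widetilde{F}_2]$ gives $\langle c_1(\underline{\mathfrak{s}}(a_i,b_j)),[\widetilde{F}_1]\rangle=A(x_i)+A(x_j)$ and hence $A(a_i,b_j)=\tfrac12(A(x_i)+A(x_j))$. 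A domain computation gives an independent check: the full preimage of a disk in $\pi_2(x_i,x_j)$ is a disk between the diagonal generators recording $A(x_i)-A(x_j)$ with full multiplicity at $\widetilde{w},\widetilde{z}$, while a single-sheet lift (available once one moves the two components of $(a_i,b_j)$ separately within its $\Spinc$ structure) records exactly half of this, since $\widetilde{w},\widetilde{z}$ lie in $\tau$-invariant regions of $\mathcal{H}$. I expect the main obstacle to be precisely the bookkeeping of intersection signs and multiplicities near the branch circle $\widetilde{T}_{2,2n+1}$ --- that is, correctly accounting for the factors of $2$ coming from the two sheets and the branching when pairing with $[\pi^{-1}(F)]$; once those conventions are set, everything else is formal.
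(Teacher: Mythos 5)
Note first that the paper does not prove this lemma: it is quoted from Levine \cite{levine2008computing} (the statement ends with a qed symbol and no argument is given), so the only ``approach'' of the paper is the citation, which your opening sentence already matches. Judged as a reconstruction of Levine's computation, your sketch has the right shape for the diagonal generators: $(a_i,b_i)=\pi^{-1}(x_i)$, its relative $\spinc$ structure is pulled back from downstairs, the Seifert surface has two lifts with $[\widetilde F_1]=[\widetilde F_2]$ in $H_2(\Sigma(T_{2,2n+1}),\widetilde T_{2,2n+1})\cong\Z$, and pushing the pairing down gives $A(a_i,b_i)=A(x_i)$ (modulo fixing consistent normalizations of the Chern-class formula upstairs and downstairs).

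The genuine gap is the off-diagonal case, which is the entire content of the lemma. For $i\neq j$ the generator $(a_i,b_j)$ is not the preimage of any downstairs generator, so no pullback functoriality applies, and the sentence asserting that the pairing of $c_1(\underline{\mathfrak s}(a_i,b_j))$ with $[\pi^{-1}(F)]$ ``splits as the sum of the two downstairs numbers'' is precisely the statement to be proved, not a consequence of describing $\spinc$ structures by ``paths to the basepoints''; even downstairs, $A(x_i)$ is not an intersection number of such a path with $F$. What is true is that the \emph{difference} $\underline{\mathfrak s}(a_i,b_j)-\underline{\mathfrak s}(a_i,b_i)$ is Poincar\'e dual to a $1$-cycle $\epsilon$ built from arcs of the $\alpha$- and $\beta$-curves joining the two generators, and the missing step is the actual evaluation $\epsilon\cdot\widetilde F$ (equivalently, a winding-number count around the branch locus), which is where the averaging and the factor $\tfrac12$ come from --- this is the bookkeeping Levine carries out and that your sketch defers. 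Your proposed ``independent check'' cannot close this gap: classes in $\pi_2$ only connect generators in the same $\spinc$ structure, so lifting bigons (whole preimages or otherwise) only recovers relative Alexander gradings within a fixed $\spinc$ structure, which are already determined by $n_{\widetilde z}-n_{\widetilde w}$; the issue is the absolute normalization in each of the $2n+1$ $\spinc$ structures, and ``moving the two components separately'' is not a well-defined operation there. Finally, a small slip: the diagonal case alone does not give the ``in particular'' clause, since the strict bound $\vert A(a_i,b_j)\vert<n$ for off-diagonal generators requires the general formula together with the fact that the extreme values $\pm n$ of $A(x_i)$ are attained only at $x_n$ and $x_{2n+1}$. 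For the purposes of this paper the citation to Levine suffices; to upgrade your sketch to a proof you would have to perform the Chern-class/$\epsilon$-cycle computation just described.
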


The knot Floer complex for 
$(\Sigma (T_{2,2n+1}), {\widetilde{T}}_{2,2n+1}, \s _0)$ 
for the spin structure $\s _0$ is actually
isomorphic to the knot Floer complex of $T_{2,2n+1}\subset S^3$:
all non-trivial components of $\partial $ in $\CFKinf (T_{2,2n+1})$ are
defined by bigons in the fundamental domain of the $(1,1)$-diagram we
are working with, and these bigons lift to rectangles in the double
branched cover (since each bigon contains a unique $z$ or $w$), hence
we get the same maps upstairs in $\CFKinf (\Sigma (T_{2,2n+1}),
{\widetilde {T}}_{2,2n+1},\s _0)$. 
The same argument 
as for $T_{2,2n+1}\subset S^3$ then applies and
shows that there are no more non-zero components of the boundary map
upstairs: any domain with Maslov index 1 connecting further pairs of
generators has some negative multiplicity.
In this way we get the $\Upsilon$-invariant of 
$(\Sigma (T_{2,2n+1}), {\widetilde{T}}_{2,2n+1}, \s _0)$.

\begin{proof}[Proof of Theorem~\ref{thm:independencetorusalt}]
The proof is essentially the same as the proof given at the end of
Section~\ref{sec:AltTorus}: for a relation we consider the double
branched cover along the slice knot, which admits a filling by a
rational homology disk, and for all spin$^c$ structures extending from
the double branched cover to this 4-manifold, the same linear
combination of $\Upsilon$-functions as in the relation must vanish.
The spin structure extends, hence we get one relation, and then we
find an element of the form $(h_+, 0)$ in the metabolizer $G_+\oplus
G_-=G$, where $h_+$ has some non-zero component. By
Lemma~\ref{lem:LevinsResult} at that component we replace the
corresponding value of $\Upsilon (1)$ with some smaller value, which
will result in violating Equation~\eqref{eq:tauH}, concluding
the proof.
\end{proof}

\subsection{Twist knots}
The $(1,1)$-diagram of Figure~\ref{fig:twist} provides a doubly
pointed Heegaard diagram for the twist knot $\twistknot _n$. The parameter
$n>0$ is chosen so, that $\twistknot _1$ is the (right-handed) trefoil knot and
$\twistknot _2$ is the Figure-8 knot.

\begin{figure}[h!]
\includegraphics[width=8cm]{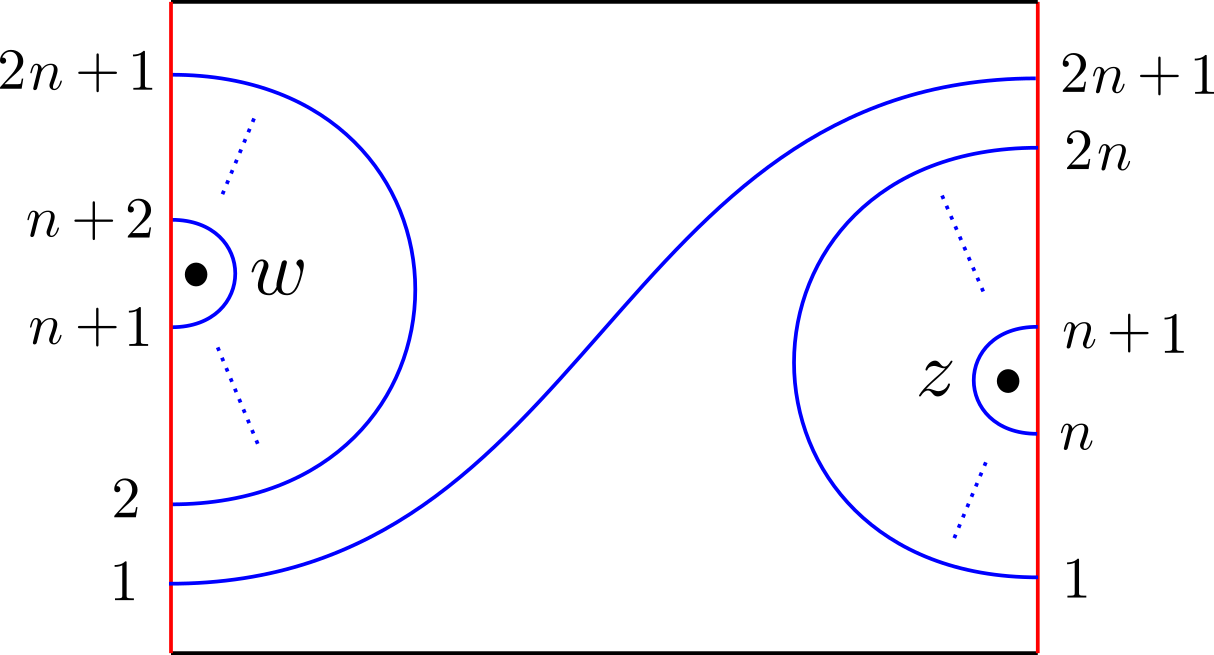}
\caption{$(1,1)$-diagram of the twist knot $\twistknot _n$}
\label{fig:twist}
\end{figure}
In particular, the determinant $\det (\twistknot _n)$ is equal to $2n+1$,
and for $n$ odd we have
\[
\Delta _{\twistknot _n}(t)=\frac{n+1}{2}t-n+\frac{n+1}{2}t^{-1}, 
\]
while for $n$ even
\[
\Delta _{\twistknot _n}(t)=-\frac{n}{2}t+(n+1)-\frac{n}{2}t^{-1}. 
\]
Furthermore, the signature $\sigma (\twistknot _n)=-1$ if $n$ is odd, 
and $\sigma (\twistknot _n)=0$ if $n$ is even.

From the diagram we can easily determine the chain complex $\CFKinf$.
This can be done by analyzing the bigons in the universal cover
of the $(1,1)$-diagram -- in this particular case, in fact, two fundamental
domains will suffice to contain all relevant bigons, see
Figure~\ref{fig:twotwist}.
\begin{figure}[h!]
\includegraphics[width=12cm]{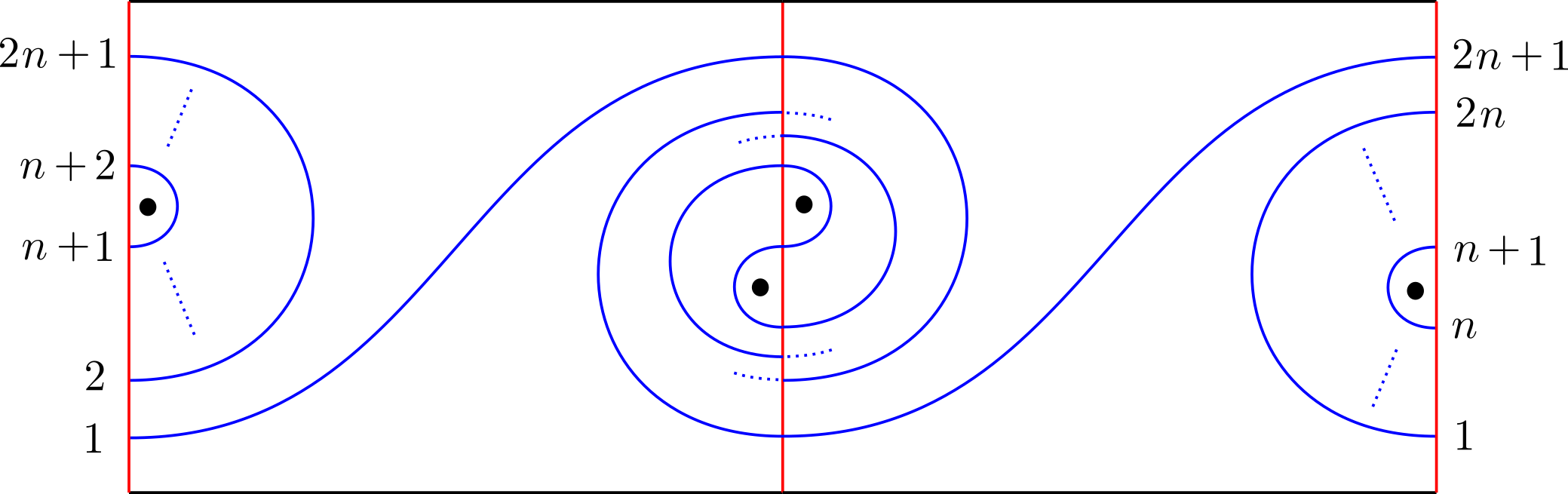}
\caption{Two fundamental domains in the universal cover of the $(1,1)$-diagram of $\twistknot _n$.}
\label{fig:twotwist}
\end{figure}

For the schematic picture of the chain complex see
Figure~\ref{fig:twistchaincomplex}.
\begin{figure}
\includegraphics[width=3cm]{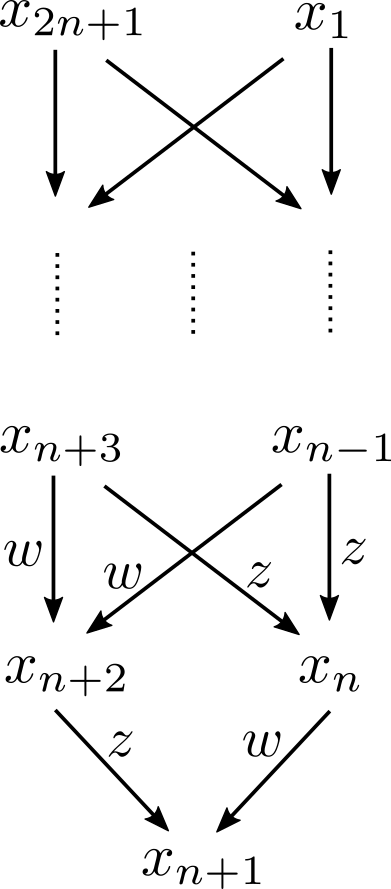}
\caption{The chain complex of the twist knot $\twistknot _n$. We use the
same conventions as in Figure~\ref{fig:t22np1Comp}. Recall that the decorations
$z$ or $w$ over the arrows indicate which basepoint is crossed by the bigon
corresponding to the particular component of the boundary.}
\label{fig:twistchaincomplex}
\end{figure}
Denoting the intersection point corresponding to $i$ by $x_i$ again, we get 
the following:

\begin{lem}
The Alexander grading $A(x_i)$ of $x_i$ is
\begin{itemize}
\item $0$ if $i\equiv n+1 \pmod{2}$,
\item $1$ if $i\equiv n \pmod{2}$ and $i>n+1$, and 
\item $-1$ if $i\equiv n \pmod{2}$ and $i<n+1$.
\end{itemize}
\end{lem}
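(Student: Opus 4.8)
The plan is to read the Alexander gradings directly off the $(1,1)$-diagram of $\twistknot_n$ in Figure~\ref{fig:twist}, using the standard fact that for a doubly pointed genus-one Heegaard diagram the relative Alexander grading of two generators $x,y$ equals $n_z(\phi)-n_w(\phi)$ for any domain $\phi$ connecting them, and then fixing the overall normalization via the conjugation symmetry of knot Floer homology.

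First I would fix notation: label the $2n+1$ intersection points $x_1,\dots,x_{2n+1}$ along the $\beta$-curve following the picture, and recall that $\det(\twistknot_n)=2n+1$ accounts for all of them. Since the single $\alpha$-curve and the single $\beta$-curve are each connected, any two generators $x_i,x_{i+1}$ are joined by a domain; the relevant ones are the elementary bigons already used to compute $\CFKinf(\twistknot_n)$, which in the universal cover $\mathbb{C}$ of the Heegaard torus are embedded bigons contained in the two fundamental domains of Figure~\ref{fig:twotwist}. Each such bigon contains either the basepoint $z$, the basepoint $w$, or neither, so passing from $x_i$ to $x_{i+1}$ changes $A$ by $-1$, $+1$, or $0$ respectively.

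Next I would trace these changes as $i$ runs from $1$ to $2n+1$. Reading the basepoint content off Figure~\ref{fig:twotwist}, the pattern is: the generators indexed by $i\equiv n+1\pmod 2$ form the ``middle'' column of the complex (compare Figure~\ref{fig:twistchaincomplex}) and all sit at one common Alexander level, while each generator with $i\equiv n\pmod 2$ is connected to its neighbours by a bigon that pushes its grading one step above that level when $i>n+1$ and one step below when $i<n+1$. This gives the three cases of the statement up to one additive constant. To pin the constant, I would invoke the symmetry $\widehat{HFK}(\twistknot_n,s)\cong\widehat{HFK}(\twistknot_n,-s)$, which forces the multiset $\{A(x_i)\}$ to be symmetric about $0$; combined with the formula for $\Delta_{\twistknot_n}(t)$ recorded above and the fact that $\twistknot_n$ is alternating, hence $\widehat{HFK}$-thin, the number of generators at each Alexander grading equals the absolute value of the corresponding coefficient of $\Delta_{\twistknot_n}$. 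Comparing these counts with the number of indices $i\in\{1,\dots,2n+1\}$ in each congruence class (which comes out to $n$ or $n+1$ middle generators and $\tfrac{n+1}{2}$ or $\tfrac{n}{2}$ on each side, according to the parity of $n$) forces the middle level to be $0$, yielding the claimed formula.

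The main obstacle is the bookkeeping in the middle step: correctly identifying, in the universal cover, which of $z$ or $w$ (if either) each elementary bigon contains as a function of the index $i$, and verifying that the two parities of $n$ behave uniformly. Everything after that is a short symmetry-and-Euler-characteristic argument.
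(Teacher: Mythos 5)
Your proposal is correct and follows essentially the same route as the paper: compute relative Alexander gradings from the $z$/$w$ content of the elementary bigons (the arrows of the chain complex, whose decorations alternate along the diagram) and then fix the overall normalization by symmetry, which the paper states simply as $A(x_{n+1})=0$. Your additional counting argument via the coefficients of $\Delta_{\twistknot_n}$ and thinness is a harmless elaboration of that same symmetry normalization rather than a different method.
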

\begin{proof}
Recall that $A(\x)-A(\y)=n_z(\phi )-n_w(\phi )$ for a domain $\phi $
connecting $\x$ and $\y$. If there is an arrow from $x_i$ to $x_j$,
then $A(x_i)-A(x_j)=1$ if the arrow is decorated by $z$ and is equal
to $-1$ if the decoration of the arrow is $w$.  Since on the vertical
paths of Figure~\ref{fig:twistchaincomplex} the decorations alternate,
and (by symmetry) $A(x_{n+1})=0$, the claim follows at once.
\end{proof}

As before, in the double branched cover the $\alpha$- and the
$\beta$-curves lift to $\alpha _1,\alpha _2$ and $\beta _1, \beta
_2$. Similarly, each intersection point $x_i$ gives rise to two
intersections $a_i$ and $b_i$. We will follow the convention that
$a_i\in \alpha _1$, $b_i\in\alpha _2$, and moreover $a_1\in \beta _1$.
Then it is not hard to see that $a_i\in \alpha _1\cap \beta _1$ if and
only if $i$ is odd (and then $b_i\in \alpha _2\cap \beta _2$) and for
even $i$ we have $a_i\in \alpha _1\cap \beta _2$ and $b_i\in \alpha
_2\cap \beta _1$. Consequently the generators of the Heegaard Floer
chain complex of the double branched cover are of the form $(a_i,b_j)$
with the constraint that $i\equiv j \pmod{2}$.

According to the earlier cited result of Levine, we have that 
$A(a_i,b_j)=\frac{A(x_i)+A(x_j)}{2}$.

\begin{lem}
The spin$^c$ structure $\s (a_i,b_j)$ can be determined as follows:
\begin{itemize}
\item $(a_i,b_j)$ represents the spin structure $\s _0$ on $\Sigma
  (\twistknot _n)$ if and only if $i=j$, and
\item $(a_i,b_j)$ and $(a_k, b_{\ell})$ are in the same spin$^c$ structure
if and only if $i-j=k-\ell$.
\end{itemize}
\end{lem}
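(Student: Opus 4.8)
The plan is to reproduce the argument of Lemma~\ref{lem:SpinCForTorus} essentially verbatim. The two points to establish are that the diagonal generators $(a_i,b_i)$ all carry the spin structure $\s_0$, and that passing from $(a_i,b_i)$ to $(a_i,b_j)$ twists the $\spinc$ structure by $(j-i)$ times a fixed generator of $H_1(\Sigma(\twistknot_n))\cong\Z/(2n+1)\Z$.

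For the first point, recall from the discussion preceding the lemma that every bigon contributing to the differential of $\CFKinf(\twistknot_n)$ contains exactly one of the two basepoints, hence lifts to an embedded rectangle in the genus-two Heegaard diagram of the branched cover; concatenating such rectangles along the arrows of the chain complex in Figure~\ref{fig:twistchaincomplex} produces domains joining any two of the $(a_i,b_i)$, so they all represent one common class $\s\in\Spinc(\Sigma(\twistknot_n))$. The covering involution $\tau$ interchanges $\alpha_1\leftrightarrow\alpha_2$ and $\beta_1\leftrightarrow\beta_2$, hence sends $(a_i,b_j)$ to $(a_j,b_i)$, and in particular fixes each $(a_i,b_i)$; since $\tau$ acts on $\Spinc$ by conjugation, $\s$ is self-conjugate. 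As $\det(\twistknot_n)=2n+1$ is odd, $\Sigma(\twistknot_n)$ is a lens space with $H_1$ cyclic of odd order, which has a unique self-conjugate $\spinc$ structure, namely $\s_0$; hence $\s(a_i,b_i)=\s_0$.

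For the second point I would compare $(a_i,b_j)$ with $(a_i,b_i)$ by joining $b_j$ to $b_i$ along the appropriate sub-arc of $\alpha_2$, exactly as in the torus-knot case: the resulting one-cycle represents $(j-i)$ times a fixed generator $g$ of $H_1(\Sigma(\twistknot_n))$, so that $\s(a_i,b_j)=\s_0+(j-i)\,g$. Both assertions of the lemma then follow. Indeed $\s(a_i,b_j)=\s_0$ forces $j-i\equiv 0\pmod{2n+1}$, and since $i,j\in\{1,\dots,2n+1\}$ we have $|j-i|\le 2n<2n+1$, whence $j=i$; and $\s(a_i,b_j)=\s(a_k,b_\ell)$ is equivalent to $(j-i)\equiv(\ell-k)\pmod{2n+1}$, where, using $i\equiv j$ and $k\equiv\ell\pmod 2$, both $j-i$ and $\ell-k$ are even integers of absolute value at most $2n$, so their difference is an even multiple of $2n+1$ of absolute value strictly less than $2(2n+1)$, hence $0$; that is, $i-j=k-\ell$.

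The step requiring the most care is the claim that the $\alpha_2$-arc from $b_j$ to $b_i$ represents precisely $(j-i)$ times a generator of $H_1$, i.e.\ that the $\spinc$ labelling induced by the $(1,1)$-diagram matches the standard $\Z/(2n+1)\Z$-labelling. This is, however, the same verification already carried out in the proof of Lemma~\ref{lem:SpinCForTorus}, and it transfers without change once the cyclic order of the intersection points $x_i$ along the $\beta$-curve is read off from Figure~\ref{fig:twist}; no new geometric input is needed.
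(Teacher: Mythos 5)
Your proposal is correct and follows the same route as the paper, which proves this lemma simply by declaring it a direct adaptation of the argument for Lemma~\ref{lem:SpinCForTorus}: the diagonal generators $(a_i,b_i)$ are connected by lifted bigon domains and represent a conjugation-invariant, hence spin, structure, and $(a_i,b_j)$ differs from $\s_0$ by $(j-i)$ times a generator of $H_1(\Sigma(\twistknot_n);\Z)\cong\Z/(2n+1)\Z$. Your write-up merely fills in details the paper leaves implicit (the involution/self-conjugacy justification and the parity-and-size counting that upgrades the twisting formula to the stated if-and-only-if statements), so no substantive difference in approach.
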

\begin{proof}
The proof is the direct adaptation of the proof of
Lemma~\ref{lem:SpinCForTorus}.
\end{proof}
A simple corollary of the above provides:
\begin{cor}
Suppose that $\vert i -j\vert >n+1$. Then $A(a_i,b_j)=0$. \qed
\end{cor}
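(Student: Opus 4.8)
The plan is to deduce the corollary directly from the Alexander grading formula $A(a_i,b_j)=\tfrac12\bigl(A(x_i)+A(x_j)\bigr)$ recorded above, together with the explicit values of $A(x_i)$ computed in the lemma preceding the spin$^c$ computation. Since both the hypothesis $|i-j|>n+1$ and the quantity $A(a_i,b_j)$ are symmetric in $i$ and $j$, and $|i-j|>n+1\ge1$ forces $i\neq j$, I may assume $i<j$, so that $j-i\ge n+2$; moreover, since $(a_i,b_j)$ is a generator of the Heegaard Floer complex of $\Sigma(\twistknot_n)$, the parity constraint gives $i\equiv j\pmod2$. It therefore suffices to show $A(x_i)+A(x_j)=0$.

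First I would split into cases according to the common parity of $i$ and $j$. If $i\equiv j\equiv n+1\pmod2$, then the grading lemma gives $A(x_i)=A(x_j)=0$, and we are done at once. Otherwise $i\equiv j\equiv n\pmod2$; in that case $i\neq n+1$ and $j\neq n+1$, and the grading lemma says that for such an index $k$ one has $A(x_k)=+1$ when $k>n+1$ and $A(x_k)=-1$ when $k<n+1$. Thus the claim reduces to showing that $i$ and $j$ lie on opposite sides of $n+1$ in the index set $\{1,\dots,2n+1\}$, the latter being the full set of intersection points, since the chain complex of $\twistknot_n$ (see Figure~\ref{fig:twistchaincomplex}) has $2n+1$ generators, consistently with $\det(\twistknot_n)=2n+1$.

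The remaining step is a short pigeonhole argument. Suppose both $i,j>n+1$; having parity $\equiv n\pmod2$ they satisfy $i\ge n+2$ and $j\le 2n+1$, whence $j-i\le n-1<n+2$, contradicting $j-i\ge n+2$. Symmetrically, if both $i,j<n+1$ then $i\ge1$ and $j\le n$, so again $j-i\le n-1$, which is impossible. Hence $i<n+1<j$, giving $A(x_i)=-1$ and $A(x_j)=+1$, and therefore $A(a_i,b_j)=\tfrac12(-1+1)=0$, as claimed.

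There is no analytic content in this argument; the only point requiring care is the bookkeeping of parities --- confirming that the generators of Alexander grading $0$ are exactly those whose index has the parity of $n+1$, so that the remaining two cases are governed solely by whether the index exceeds $n+1$ --- together with checking that the index range is indeed $\{1,\dots,2n+1\}$. Neither is a genuine obstacle.
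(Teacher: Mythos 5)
Your argument is correct and is exactly the route the paper intends: the corollary is left without proof precisely because it follows immediately from Levine's formula $A(a_i,b_j)=\tfrac12\bigl(A(x_i)+A(x_j)\bigr)$, the Alexander-grading lemma for the $x_i$, and the parity constraint $i\equiv j \pmod 2$. Your pigeonhole bookkeeping (indices running over $\{1,\dots,2n+1\}$, so two indices of the same parity as $n$ with difference at least $n+2$ must straddle $n+1$ and hence contribute gradings $-1$ and $+1$) simply makes explicit what the paper treats as immediate.
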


According to the last corollary, for $n\geq 1$ there is always a 
spin$^c$ structure where all generators have $A=0$, hence the
Upsilon-function is constantly zero.

On the other hand, as in the case of torus knots, we immediately see
that in the spin structure $\s _0$ the chain complex $\CFKinf
(\Sigma(\twistknot _n), {\widetilde {\twistknot }}_n, \s _0)$ is isomorphic to $\CFKinf
(\twistknot _n)$. Indeed, the previous lemma shows a bijection between the
generators, and all bigons of the fundamental domain lift to an
embedded rectangle, providing non-trivial boundary maps.  (The further
non-zero components then are forced by the fact that $\partial ^2=0$.)

Independence of the family $\{\twistknot _n\}_{n>0, n \neq 4}$ was
first established in~\cite{CG}, and can be deduced from Lisca's
work~\cite{lisca2007sums}.
The above data suffice to show a weaker result about 
linear independence in the concordance group:
\begin{prop}
The family $\{ \twistknot _p\mid p$ odd and $2p+1$ is prime$\}$ forms a set of linearly 
independent elements in the smooth concordance group ${\mathcal {C}}$.
\end{prop}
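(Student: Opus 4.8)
The plan is to repeat, almost verbatim, the metabolizer argument used for Theorem~\ref{thm:independencetorusalt} in Section~\ref{sec:TorusAgain}, feeding in only two pieces of information about the lifts $\widetilde{\twistknot}_p\subset\Sigma(\twistknot_p)=L(2p+1,1)$ that are already in hand from the computation above. Suppose some non-trivial integral relation $\sum_p c_p[\twistknot_p]=0$ holds in $\Cl$; separating positive and negative coefficients, it exhibits a slice knot $K=K_+\#(-K_-)$ with $K_+=\#_i a_i\twistknot_{p_i}$, $K_-=\#_j b_j\twistknot_{q_j}$, all $a_i,b_j>0$, all $p_i$ and $q_j$ odd with $2p_i+1$ and $2q_j+1$ prime, and the index sets $\{p_i\}$, $\{q_j\}$ disjoint. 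First I would pass to the double branched cover: $\Sigma(K)$ bounds a rational homology ball over which $\s_0$ extends (Lemma~\ref{labelling}), so $H_1(\Sigma(K);\Z)=G_+\oplus G_-$, with $G_+=\bigoplus_i(\Z/(2p_i+1)\Z)^{a_i}$ and $G_-=\bigoplus_j(\Z/(2q_j+1)\Z)^{b_j}$, has square order and carries a metabolizer $M$ of the linking form with $|M|=\sqrt{|H_1(\Sigma(K);\Z)|}$; and, as in the proof of Theorem~\ref{twistedinvariants} and of the second proof of Theorem~\ref{thm:independencetorusalt} (using the additivity $\Upsilon_{L\#L',\xi+\xi'}(t)=\Upsilon_{L,\xi}(t)+\Upsilon_{L',\xi'}(t)$ from Proposition~\ref{sums}), the function $\sum_i a_i\Upsilon_{\widetilde{\twistknot}_{p_i},\eta_i}(t)-\sum_j b_j\Upsilon_{\widetilde{\twistknot}_{q_j},\eta_j'}(t)$ vanishes identically for every $(\eta,\eta')\in M$.

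Next I would record the two inputs from the present computation. (i) In the spin structure $\s_0$ one has $\CFKinfty(\Sigma(\twistknot_p),\widetilde{\twistknot}_p,\s_0)\cong\CFKinfty(\twistknot_p)$, and, $p$ being odd, the model complex of Figure~\ref{fig:twistchaincomplex} gives $\tau(\twistknot_p)=1$; hence $\Upsilon_{\widetilde{\twistknot}_p,\s_0}(t)=\Upsilon_{\twistknot_p}(t)$ is not identically zero (its value at $t=1$ is $-1$). This is where the parity hypothesis enters, since $\tau(\twistknot_p)=0$ for even $p$. (ii) From the Levine-type formula $A(a_i,b_j)=\tfrac12\bigl(A(x_i)+A(x_j)\bigr)$ and $A(x_i)\in\{-1,0,1\}$, every generator of $\CFKinfty(\Sigma(\twistknot_p),\widetilde{\twistknot}_p,\s_0+h)$ satisfies $|A|\le 1$, so $\tau_{\s_0+h}(\widetilde{\twistknot}_p)\le 1$ for all $h$; moreover, by the corollary above (that $A(a_i,b_j)=0$ whenever $|i-j|>p+1$) there is at least one class $h_0\ne 0$ in which every generator lies in Alexander grading $0$, so that $\Upsilon_{\widetilde{\twistknot}_p,\s_0+h_0}(t)\equiv 0$ and in particular $\tau_{\s_0+h_0}(\widetilde{\twistknot}_p)=0$.

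Finally I would conclude as follows. Since the primes $2p_i+1$ and $2q_j+1$ are pairwise distinct, $M$ splits along its primary subgroups, $M=\bigoplus_i M_i\oplus\bigoplus_j N_j$, with $M_i$ a subgroup of $(\Z/(2p_i+1)\Z)^{a_i}$, hence an $\mathbb{F}_{2p_i+1}$-subspace, and likewise for $N_j$. The relation being non-trivial, $H_1(\Sigma(K);\Z)\ne 0$, so $|M|>1$ and some summand is non-zero; replacing $K$ by $-K$ if necessary, I may assume $M_i\ne 0$ for some $i$, and put $q=2p_i+1$, $a=a_i$, so $M_i\subseteq\mathbb{F}_q^{a}$ has positive dimension. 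For $\eta=(h_1,\dots,h_a)\in M_i$, I substitute $\xi=\eta$ (in the $i$-th block, zero elsewhere) and $\xi=0$ into the vanishing relation of the first paragraph and subtract, obtaining $\sum_{l=1}^a\bigl(\Upsilon_{\widetilde{\twistknot}_{p_i},\s_0+h_l}(t)-\Upsilon_{\twistknot_{p_i}}(t)\bigr)\equiv 0$; evaluating at $t=1$ and using input (ii), each summand is $\le 0$, hence all vanish, i.e.\ $\tau_{\s_0+h_l}(\widetilde{\twistknot}_{p_i})=1$ for every $l$. Choosing a non-zero $\eta\in M_i$, with say $h_l\ne 0$, and running this over the multiples $k\eta\in M_i$ ($k\in\mathbb{F}_q$), the $l$-th coordinate $kh_l$ ranges over all of $\Z/q\Z$, so $\tau_{\s_0+h}(\widetilde{\twistknot}_{p_i})=1$ for \emph{every} $h$, contradicting the class $h_0$ of input (ii). The one step I expect to require care is this last packaging: the delicate point is that the $\mathbb{F}_q$-linearity of $M_i$ reduces everything to the inequality $\tau_{\s_0+h}(\widetilde{\twistknot}_p)\ne\tau(\twistknot_p)$ for a \emph{single} $h\ne 0$, so that a full spin$^c$-by-spin$^c$ computation in the spirit of Theorem~\ref{thm:interi} is not needed; that single inequality is input (ii) and is immediate from the explicit genus-two diagram, but without the linearity observation one would be tempted to chase the value of $\tau_{\s_0+h}$ in every $\spinc$ structure.
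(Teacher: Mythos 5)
Your overall route is the same as the paper's: pass to the double branched cover of the putative slice connected sum, use the subgroup of order $\sqrt{|H_1|}$ on which the spin$^c$-refined $\Upsilon$'s of the lifts must vanish (additivity via Proposition~\ref{sums}), feed in the two computed facts (the lift in the spin structure has the same complex as $\twistknot_p$ itself, and there is a class $h_0\neq 0$ in which all generators sit in Alexander grading zero, so $\Upsilon\equiv 0$ there), and exploit primality of $2p+1$ to move around inside the metabolizer. Your primary-decomposition bookkeeping, and the idea of forcing termwise vanishing from the one-sided bound $\tau_{\s_0+h}(\widetilde{\twistknot}_p)\le 1$ (which indeed follows from Lemma~\ref{lem:LevinsResult} and $A(x_i)\in\{-1,0,1\}$), is in fact more explicit than the paper's closing step, which only invokes deleting terms from one side of the relation.

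There is, however, one step that is not justified as written: ``evaluating at $t=1$ and using input (ii), each summand is $\le 0$.'' The bound $\tau_{\s_0+h}\le 1$ controls only the behaviour of $\Upsilon_{\widetilde{\twistknot}_p,\s_0+h}(t)$ near $t=0$, where $\Upsilon_{\s}(t)=-\tau_{\s}\,t$; the value at $t=1$ also depends on Maslov gradings (it is governed by quantities of the form $A-M$ on cycles generating the tower), so a bound on Alexander gradings alone gives no comparison between $\Upsilon_{\s_0+h}(1)$ and $\Upsilon_{\s_0}(1)=-1$. Moreover the asserted sign is the wrong one: near $t=0$ each summand equals $(1-\tau_{\s_0+h_l})\,t\ge 0$, and at $t=1$ your own class $h_0$ gives the summand $0-(-1)=+1>0$, so ``each summand $\le 0$'' cannot hold. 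The repair is immediate and stays inside your framework: the vanishing relation is an identity of functions of $t$, so compare slopes at $t=0$ (equivalently evaluate at small $t>0$); each summand is then $(1-\tau_{\s_0+h_l})\,t\ge 0$, hence all vanish and $\tau_{\s_0+h_l}=1$ for every $l$, after which your linearity sweep over the multiples $k\eta$ (using that $2p_i+1$ is prime) reaches $h_0$ and contradicts $\tau_{\s_0+h_0}=0$. With that one change your proof is correct and coincides in substance with the paper's, while making explicit the termwise-vanishing step that the paper leaves implicit.
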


\begin{proof}
As in the case of torus knots, suppose that 
there exists a linear dependence $\sum _i k_i \twistknot _{p_i}\sim 0$, and rewrite
it as $\sum m_i \twistknot _{p_i} -\sum n_j \twistknot _{q_j}\sim 0$
with $m_i, n_j>0$. 

The double branched cover $Y$ (which is the 
appropriate connected sum of the double branched covers of 
the individual knots $\twistknot _{p}$), bounds a rational homology ball,
hence in $H_1(Y; \Z)$ there is a metabolizer $M$ as before.
Once again, we write $M$ as $M_+\oplus M_-$. 

Notice that since the
spin structure extends, we have that 
$\sum m_i-\sum n_j=0$, or equivalently
\begin{equation}\label{eq:eqqq}
\sum m_i=\sum n_j.
\end{equation}
(This would also follow from the signature values of the knots 
since we assumed that all $p$ are odd.)

Now suppose that $(h_+, h_-)$ is a non-zero element in the metabolizer
$M$. By the same argument as in the proof of
Theorem~\ref{thm:independencetorusalt}, an appropriate multiple of
this element looks like $(0,h_-')$ (or $(h_+',0)$, but the two cases
are completely symmetric). By eventually taking a further multiple, we
can assume that at least one component of $h_-'$ is in a spin$^c$
structure where all generators have Alexander grading equal to zero,
hence the $\Upsilon$-function in that spin$^c$ structure is
identically zero.

This provides the desired contradiction, since  in Equation~\eqref{eq:eqqq}
we do not change the left hand side, but delete some (strictly positive)
terms from the right hand side, hence the resulting expression does not hold anymore, contradicting the fact that the spin$^c$ structure
extends to the rational homology 4-ball.
\end{proof}

\begin{rem}
We expect that for $n$ even the knot ${\widetilde {\twistknot }}_n\subset
\Sigma (\twistknot _n)$ has constant zero $\Upsilon$-function in every spin$^c$
structure.
\end{rem}

\end{document}